\def \highlight{0}
\newtheorem{theorem}{Theorem}
\newtheorem{lemma}{Lemma}
\newtheorem{remark}{Remark}
\newtheorem{corollary}{Corollary}
\newtheorem{proposition}{Proposition}
\newtheorem*{theorem*}{Theorem}
\newtheorem*{example*}{Example} 
\newtheorem*{definition*}{Definition}
\newtheorem*{lemma*}{Lemma}
\newtheorem*{remark*}{Remark}
\newtheorem*{corollary*}{Corollary}
\newtheorem*{proposition*}{Proposition}
\newtheorem*{assumption*}{Assumption}
\newtheorem*{claim*}{Claim}
\newtheoremstyle{TheoremNum}
        {\topsep}{\topsep}              
        {\itshape}                      
        {}                              
        {\bfseries}                     
        {.}                             
        { }                             
        {\thmname{#1}\thmnote{ \bfseries #3}}
\theoremstyle{TheoremNum}
\newtheoremstyle{LemmaNum}
        {\topsep}{\topsep}              
        {\itshape}                      
        {}                              
        {\bfseries}                     
        {.}                             
        { }                             
        {\thmname{#1}\thmnote{ \bfseries #3}}
\theoremstyle{LemmaNum}
\newcommand{\<}{\left<}
\renewcommand{\>}{\right>}
\renewcommand{\[}{\left[} 
\renewcommand{\]}{\right]} 
\renewcommand{\(}{\left(}
\renewcommand{\)}{\right)}
\newcommand{\g}{ \mathbf{g} } 
\newcommand{\V}{ \mathbf{V} }
\renewcommand{\b}{ \mathbf{b} } 
\renewcommand{\cite}{\citep}
\renewcommand{\Pr}{ \mathbb{P} }
\newcommand{\prox}{\text{\textbf{prox}}}
\newcommand{\R}{\mathbb{R}}
\newcommand{\B}{\mathbb{B}}
\newcommand{\E}{\mathbb{E}}
\newcommand{\x}{\mathbf{x} }
\newcommand{\y}{\mathbf{y} }
\newcommand{\z}{\mathbf{z} }
\renewcommand{\v}{\mathbf{v} }
\journal{European Journal of Operational Research}
\begin{document}

\begin{frontmatter}



\title{The Nesterov--Spokoiny Acceleration Achieves Strict \MakeLowercase{\textit{o}(\textit{1/k}${}^2$)} Convergence}


\author[fdu1]{Weibin Peng} 
\ead{23210180060@m.fudan.edu.cn}

\author[fdu2]{Yu Liu} 
\ead{yuliu22@m.fudan.edu.cn}

\author[fdu2]{Tianyu Wang} 
\ead{wangtianyu@fudan.edu.cn}

\affiliation[fdu1]{organization={School of Mathematics, Fudan University},
            city={Shanghai},
            country={China}}

\affiliation[fdu2]{organization={Shanghai Center for Mathematica Sciences, Fudan University},
            city={Shanghai},
            country={China}}



\begin{abstract} 
    
    This paper studies the Nesterov-Spokoiny Acceleration (NSA), a variant of the accelerated gradient method by Nesterov and Spokoiny. For smooth convex optimization, NSA achieves a strict $o(1/k^2)$ convergence rate in function value and an $o(1/(k^3 \log k))$ rate in squared gradient norm, while ensuring monotonic descent of the objective. We further study a zeroth-order version of NSA that handles inexact gradients, and extends NSA to composite optimization problems, in each case establishing $o(1/k^2)$ convergence in function value. A continuous-time analysis reveals connections to high-resolution ODEs known to underlie acceleration phenomena. 
\end{abstract} 



\begin{keyword}
Continuous Optimization, Convex Optimization



\end{keyword}

\end{frontmatter}

\section{Introduction}



A central challenge in optimization is efficiently solving the problem 
\begin{align} 
    \min_{ \x \in \R^n}  f (\x) , \label{eq:goal}
\end{align}
where $f$ is a convex function bounded from below. 
\if\highlight1 
\color{red}
\fi 
The pursuit of robust solvers for this problem is driven by its profound real-world utility, as it encapsulates the core computational task in fields ranging from machine learning—where it facilitates model training via empirical risk minimization \cite{jordan2015machine}—to systems control \cite{lewis2012optimal}, signal processing \cite{orfanidis1995introduction}, and operations research \cite{pennanen2012introduction}. Consequently, advancements in convex optimization directly translate to enhanced performance and scalability across much of the modern technological landscape.

\if\highlight1 
\color{red}
\fi 
Given the widespread applicability of (1), developing efficient algorithms for its solution is of paramount importance. The most straightforward approach, gradient descent, often suffers from a slow convergence rate, particularly for ill-conditioned problems. Second-order methods, like Newton's method \citep[e.g.,][and references therein]{polyak2007newton}, address this by using the Hessian to achieve superlinear convergence, but at a prohibitive computational cost for large-scale problems and with only local convergence guarantees.

The quest to replicate this acceleration has therefore driven the development of advanced first-order methods. This pursuit is highly relevant to the rise of gradient-based zeroth-order optimization, which is particularly effective for fine-tuning large neural networks. In this context, the cost of computing gradients through back-propagation heavily outweighs the cost of a simple function evaluation via forward-propagation \citep{malladi2023finetuning}. Therefore, advances in efficient first-order methods potentially inform and inspire the development of better zeroth-order optimizers.

\if\highlight1 
\color{red}
\fi 

\subsection{Zeroth-order Optimization}

Recently, the rising demand for fine-tuning large language models has brought zeroth-order optimization back into the spotlight \cite[e.g.,][]{malladi2023finetuning,10.5555/3692070.3694514}. This resurgence is largely due to the memory efficiency of zeroth-order optimization, making it particularly suitable for large-scale deep learning challenges.

Although there is a vast body of literature on zeroth-order optimization (refer to \cite{conn2009introduction} for a contemporary overview), which encompasses techniques including Bayesian optimization \cite{shahriari2015taking}, stochastic optimization \cite{robbins1951stochastic,kiefer1952stochastic,nemirovski2009robust} and genetic algorithms \cite{srinivas1994genetic}, the application of zeroth-order optimization in fine-tuning is quite straightforward. We estimate the gradient using zeroth-order information and apply gradient-based methods using the estimated gradient. 

\begin{remark} 
    Over the years, gradient-based algorithms have become the default solver for (\ref{eq:goal}) in several scenarios, and a tremendous amount of gradient methods have been invented. The history of gradient algorithms dates back to classical times, when Cauchy proposed a method of finding the minimum value of a function via first-order information \cite{lemarechal2012cauchy}. Nowadays, gradient-based zeroth-order methods are gaining importance. 
\end{remark} 

Throughout the years, many gradient-based zeroth-order optimization methods have been studied (e.g.,\cite{flaxman2005online,ghadimi2013stochastic,duchi2015optimal,nesterov2017random,wang2018stochastic,liu2018zeroth,ji2019improved,balasubramanian2021zeroth,yue2023zeroth,Wang2023}); See \cite{liu2020primer} for an exposition. For example, \cite{flaxman2005online} studied the stochastic gradient estimator using a single-point function evaluation for the purpose of bandit learning. \cite{duchi2015optimal} studied stabilization of the stochastic gradient estimator via two-points (or multi-points) evaluations. \citet{nesterov2017random,balasubramanian2021zeroth} studied gradient/Hessian estimators using Gaussian smoothing, and investigated downstream optimization methods using the estimated gradient. 
In particular, \cite{nesterov2017random} introduced the first accelerated zeroth-order method that achieves an $O (k^{-2})$ convergence rate in expectation. 




\color{black}

\subsection{Accelerated Gradient Methods}   

\if\highlight1 
\color{red}
\fi 
Based on a common physics phenomenon and the original Gradient Algorithm, Polyak's Heavy-ball method was invented \cite{1964Some}. This algorithm is perhaps the first method that directly aims to accelerate the convergence of gradient search. 
One of the most surprising works in this field is the Nesterov's Accelerated Gradient (NAG) method \cite{nesterov1983method}, where accelerated gradient methods were invented to speed up the convergence for smooth convex programs. For such problems, NAG achieves a $O(k^{-2})$ convergence rate, without incurring computations of higher-order expense. Furthermore,
for bounded iteration counts $k$, the convergence rate of NAG is provably optimal among all methods whose iterates are constrained to the span of previous gradients and the initial point \cite{nest2018}.
\color{black}

Since Nesterov's original acceleration method \cite{nesterov1983method}, several accelerated gradient methods have been invented, including the one that inspires our work \cite{nesterov2011random,nesterov2017random}. Also, \cite{Flammarion2015FromAT} builds a connection between average gradient and acceleration. 
 Beyond these discrete-time analyses, the fundamental nature of the acceleration phenomenon has been fruitfully investigated from a continuous-time perspective. 
In particular, \cite{JMLR:v17:15-084} pointed out the relation between the a second-order ordinary differential equation and NAG. This discovery allows for a better understanding of Nesterov’s scheme. Afterwards, a family of schemes with $O(1/k^2)$ convergence rates has been obtained. \cite{2018Understanding} finds a finer correspondence between discrete-time iteration and continuous-time trajectory and proposes a gradient-correction scheme of NAG. \cite{chen2022gradient} introduced an implicit-velocity scheme of NAG and investigates on the convergence rate of gradient norms. 
Please See Section \ref{sec:ode} for more discussions on the continuous-time perspective. 



\if\highlight1 
\color{red}
\fi The principles of acceleration have been successfully combined with other powerful optimization tools, most notably the proximal operator; See e.g., \cite{parikh2014proximal} for an exposition. \color{black}
The proximal operator can handle possible nonsmoothness in the objective. 
Roughly speaking, the proximal operator finds the next iteration point by minimizing an approximation of the objective (or part of the objective) plus a regularizer term. In a broad sense, the mirror map in mirror descent \cite{blair1985problem} is also a type of proximal operator. In discussing related works, mirror descent algorithms are considered as proximal methods. 




Over the years, various accelerated gradient/proximal methods have been invented. \cite{nesterov2005smooth} introduced a proximal method that achieves $O(k^{-2})$ convergence rate for convex nonsmooth programs. 
\cite{2009A} popularized accelerated proximal methods via the Fast Iterative Shrinkage-Thresholding Algorithm (FISTA) algorithm, which achieves a better rate of convergence for linear inverse problems, while preserving the computational simplicity of ISTA \cite{combettes2005signal,daubechies2004iterative}. FISTA has also been analyzed in \cite{chambolle2015convergence}. 
\if\highlight1 
\color{red}
\fi \cite{Flammarion2015FromAT} shows that average gradient methods can also achieve acceleration. \citep{attouch2022first} introduced the Inertial Proximal Algorithm with Hessian Damping (IPAHD), an accelerated first-order method that utilizes the physics of Hessian damping. \color{black}
Motivated by the complementary performances of mirror descent and gradient descent, \cite{allen2014linear,doi:10.1137/18M1172314} couple these two celebrated algorithms and present a new scheme to reconstruct NAG on the basis of other algorithms. \cite{10.5555/2969442.2969558} builds a continuous-time analysis of the accelerated mirror descent method, whose discretized version also converges at rate $O(k^{-2})$. \cite{bubeck2015geometric} introduced a modification of NAG with a geometric interpretation and a proof of the same convergence rate as NAG. 
Notably, 
\if\highlight1 
\color{red}
\fi \cite{doi:10.1137/15M1046095} proved that the forward-backward method of Nesterov \cite{doi:10.1137/130910294} converges at rate $o(k^{-2})$.

However, one notable drawback of all Nesterov-type acceleration methods, when compared to gradient descent, is that the function value does not consistently decrease. This phenomenon arises from the momentum introduced during the gradient step. 
This paper presents a method that unifies the benefits of acceleration momentum and descent properties. While techniques for ensuring monotonicity in accelerated algorithms exist—such as the monotone APG by \cite{beck2009fast2} and subsequent APG-type methods by \cite{li2015accelerated} which ensure sufficient descent for nonconvex problems while maintaining an $O(k^{-2})$ rate for convex ones—their nonconvex convergence was later analyzed under the K{\L} framework by \cite{li2017convergence}. We are the first to demonstrate that such monotone methods simultaneously achieve a $ o(k^{-2}) $ $\limsup$ convergence in function value and $o(\frac{1}{k^3 \log k })$ $\liminf$ convergence in gradient norm. Furthermore, we prove that a zeroth-order implementation of our algorithm also enjoys a $o(k^{-2})$ convergence in function value.

\color{black}





\subsection{Our Contributions} 


In this paper, we study an acceleration gradient method called Nesterov--Spokoiny Acceleration (NSA), which is a variation of an acceleration method of Nesterov and Spokoiny \cite{nesterov2011random,nesterov2017random}. 
Our method harnesses the advantages of both accelerated gradient methods and traditional gradient descent -- it leverages an acceleration momentum, and ensures the decrease of function values. 
Theoretically, the NSA algorithm satisfies the following convergence rate properties: 
\begin{enumerate} 
     \item The sequence $\{ \x_k \}_{k \in \mathbb{N}} $ governed by NSA satisfies 
     \begin{align*}
         \limsup\limits_{k \to \infty } k^2 ( f (\x_k ) - f^* ) = 0 ,
     \end{align*}
     and $ \liminf\limits_{k \to \infty } k^2 \log k \log\log k ( f (\x_k ) - f^* ) = 0 $, 
     where $f^* > -\infty$ is the minimum of the smooth convex function $f$. 
    \item The sequence $\{ \y_k \}_{k \in \mathbb{N}} $ governed by NSA satisfies 
    \begin{align*}
        \liminf\limits_{k \to \infty } k^3 \log k \log\log k \| \nabla f (\y_k ) \|^2 = 0 . 
    \end{align*} 
\end{enumerate}

\if\highlight1 
\color{red}
\fi

To our knowledge, this work is the first to establish an $ o(k^{-2}) $ convergence rate for the function value and an $ o(\frac{1}{k^{3} \log k }) $ rate for the squared gradient norm, all while maintaining strictly monotonic descent of function value.






Also, this work presents a comprehensive study of the NSA algorithm. First, we introduce a zeroth-order variant capable of handling inexact gradients and prove its convergence at an $o(1/k^2)$ rate. Second, we propose a modification designed for convex nonsmooth problems, and provide a continuous-time analysis of its dynamics. Lastly, we adapt the framework to the composite optimization setting, where it likewise achieves an $o(1/k^2)$ convergence rate in the function value. 
A comparison of NSA with state-of-the-art methods is summarized in Table \ref{tab:full}.

\color{black}

\begin{table}[]
    \centering
    \begin{tabular}{|c|c|c|c|c|c|c|}
           \hline 
         & \makecell{$\limsup$ \\converg. \\rate for\\ func. value}
         & \makecell{$\liminf$ \\converg. rate\\ for grad. \\ norm sqr.} 
         & \makecell{grad.\\comp.\\per iter.} 
         & \makecell{guaranteed\\monot.\\descent?} 
         & \makecell{explicitly\\support\\prox.?} 
         \\ \hline \hline
         \cite{nesterov1983method} & $O (\frac{1}{k^2})$  & N/A & 1 & No &   N/A \\ \hline

        \cite{2009A} & $O (\frac{1}{k^2})$  & N/A & 1 & No & Yes  \\ \hline 
         \cite{allen2014linear} & $O (\frac{1}{k^2})$ &  N/A & 1 & No & Yes  \\ \hline 
         \cite{Flammarion2015FromAT} & $O (\frac{1}{k^2})$  & N/A& 1 & No & N/A   \\ \hline 
         
          \cite{10.5555/2969442.2969558} & $O (\frac{1}{k^2})$ & N/A  & 1 & No & Yes \\ \hline 
          \cite{JMLR:v17:15-084} & $O (\frac{1}{k^2})$  & N/A & 1 & No & Yes   \\ \hline
          \cite{doi:10.1137/15M1046095} & $o (\frac{1}{k^2})$ &  N/A  & 1 & No & Yes  \\ \hline 
         \cite{nesterov2017random} & $O (\frac{1}{k^2})$  & N/A & 1 & No & N/A  \\ \hline 
         \cite{doi:10.1137/18M1172314} & $O (\frac{1}{k^2})$  & N/A & 1 & No & Yes  \\ \hline 
        \cite{2019rate} & $o (\frac{1}{k^2})$ &  N/A & 1 & No & Yes  \\ \hline 
        \cite{attouch2022first} & $O (\frac{1}{k^2})$  & $O(\frac{1}{k^3})$ & 1 & No & Yes   \\ \hline 
        \cite{2018Understanding} & $o (\frac{1}{k^2})$  & $O(\frac{1}{k^3})$ & 1 & No & N/A \\ \hline 
          \cite{chen2022gradient} & $O (\frac{1}{k^2})$  & $o(\frac{1}{k^3})$ & 1 & No & N/A   \\
          \hline 
            \hline 
         \makecell{\textbf{This work}\\Theorem \ref{thm:main}, \ref{thm:prox-improve}} & $o (\frac{1}{k^2})$ & $o(\frac{1}{k^{3}\log k})$ & 2 & Yes & Yes  \\ 
        \hline 
    \end{tabular} 
    \caption{
    This table provides a chronological comparison of state-of-the-art acceleration methods for convex programs. The columns report key theoretical properties: the $\limsup$ convergence rate for the function value, the $\liminf$ convergence rate for the squared gradient norm, the number of gradient computations per iteration (excluding proximal operations), whether the algorithm ensures a monotonically decreasing objective value, and whether a proximal version has been explicitly analyzed. Entries marked "N/A" indicate that the corresponding result is not known or not applicable. Notably, our work is the first to \textbf{simultaneously} achieve $ o(k^{-2}) $ $\limsup$ convergence in function value, and $ o(\frac{1}{k^{3} \log k }) $ $\liminf$ convergence in squared gradient norm. 
    } 
    \label{tab:full} 
\end{table}

\section{The Nesterov--Spokoiny Acceleration} 


\if\highlight1 
\color{red}
\fi 
In the work of Nesterov and Spokoiny \cite{nesterov2017random}, an acceleration principle that differs from Nesterov's classic acceleration method \cite{nesterov1983method,nest2003} is presented. This method involves maintaining two sequences, $\{ \x_k \}$ and $\{ \z_k \}$, and utilizes the gradient on the line connecting $\x_k$ and $\z_k$ to update both trajectories. Nesterov and Spokoiny introduced this form of acceleration specifically in the context of zeroth-order acceleration.
To effectively enhance this method of Nesterov and Spokoiny, we integrate the principle of gradient descent into this acceleration method. A similar principle previously appeared in \cite{pmlr-v70-li17g}. In this paper,\textit{ we provide a design that merges the descent property with Nesterov's momentum, achieving a convergence rate of $o(k^{-2})$ while maintaining monotonic descent. }
\color{black}
The Nesterov--Spokoiny Acceleration algorithm is described in Algorithm \ref{alg:nsa}. 

\begin{algorithm}[H] 
    \caption{Nesterov--Spokoiny Acceleration (NSA)} 
    \label{alg:nsa} 
    \begin{algorithmic}[1]  
        \STATE \textbf{Initialize: }Pick $ \x_0 = \z_0 \in \R^n $ (such that $ \nabla f (\x_0) \neq 0 $.) 
        \FOR{$k = 0,1,2,\cdots,$}
            \STATE Compute $\y_{k} = (1 - \alpha_k) \x_k + \alpha_k \z_k $, with $\alpha_k = \frac{p}{k+p}$ for some constant $p$. 
            \STATE $\x_{k+1}' =  \y_k - \eta \nabla f (\y_k) $. /* $\eta$ is the step size. */
            \STATE $\x_{k+1}'' =  \x_k - \eta \nabla f (\x_k) $. 
            \STATE 
                $
                    \x_{k+1} =
                    \begin{cases}
                        \x_{k+1}', & \text{ if } f ( \x_{k+1}' ) \le f ( \x_{k+1}'' ) \\ 
                        \x_{k+1}'', & \text{ otherwise. } 
                    \end{cases} 
                $ 
            \STATE $\z_{k+1} = \z_k - \frac{ \eta }{ \alpha_k } \nabla f (\y_k) $. 
        \ENDFOR 
    \end{algorithmic} 
\end{algorithm}  

\begin{figure}[H]
     \centering
     {\includegraphics[width = 0.9\linewidth]{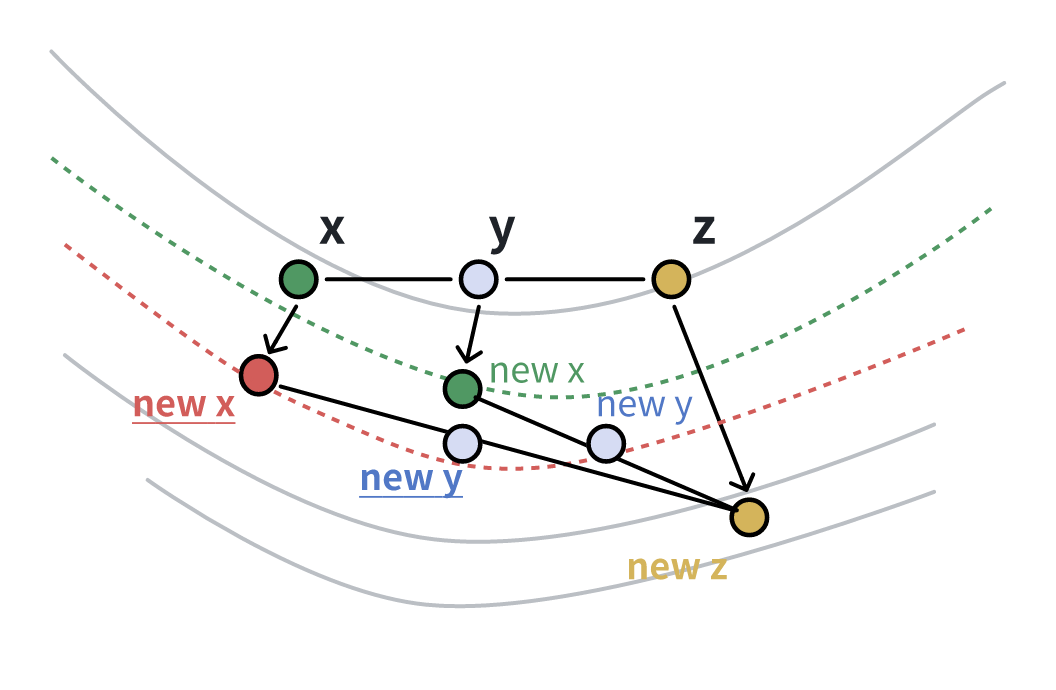}\caption{The figure above illustrates a comparison between Algorithm 1 and the original algorithm by Nesterov and Spokoiny \cite{2017Random}, which shows their paths in one iteration from the same starting point. The curves represent the contour lines. The points that are underlined are the new iteration points of Algorithm 1, while the rest new points are from the original algorithm by Nesterov and Spokoiny \cite{2017Random}. The red and green dashed lines represent the function values of the next step for $x_t$, respectively.} } \hfill
     
     \end{figure}


Convergence guarantee for smooth convex objectives $ \mathscr{F}_{L}^{1,1} (\R^n) $ \cite{nest2003,nest2018} is in Theorem \ref{thm:main}. For completeness, the definition of $ \mathscr{F}_{L}^{1,1} (\R^n) $ is included in the Appendix. 

\begin{theorem}
    \label{thm:main}
    Instate all notations in Algorithm \ref{alg:nsa}. Let  $p \ge 3$, and let $ \alpha_k = \frac{p}{k+p} $ for each $k \in \mathbb{N}$.
    Consider an objective function $f \in \mathscr{F}_{L}^{1,1} (\R^n) $. 
    \begin{enumerate}
        \item For any $\eta \in ( 0, \frac{2}{3L}]$, the NSA algorithm satisfies 
        \begin{align*}
            \limsup_{k \to \infty} k^2 (f (\x_k) - f^*) = 0 , 
            \quad \text{and} \quad 
            \liminf_{k \to \infty} k^2 \log k \log \log k (f (\x_k) - f^*) = 0 , 
        \end{align*}
        where $f^* = \min_{\x \in \R^n} f (\x) > -\infty$. 
        \item For any $\eta \in ( 0, \frac{2}{3L}]$, the NSA algorithm satisfies 
        \begin{align*} 
            \liminf_{k \to \infty} k^3 \log k \log \log k \| \nabla f (\y_k) \|^2 = 0 . 
        \end{align*} 
    \end{enumerate} 
    If the objective $f$ is $L$-smooth and satisfies $ \inf_{x \in \mathbb{R}^n} f (x) > - \infty $ ($f$ is possibly nonconvex), then the following holds. 
    \begin{itemize}
        \item For any $\eta \in ( 0, \frac{2}{3L}]$, the NSA algorithm satisfies 
        \if\highlight1 
        \color{red}
        \fi
        \begin{align*}
             {\min_{k \in \{ 0 , 1,\cdots, N \} }  \| \nabla f (\x_k) \|^2 
        \le 
        \frac{ 3 ( f (\x_0) - f^* ) }{ 2 \eta ( N+1 ) } } . 
        \end{align*}
        \color{black}
    \end{itemize}
\end{theorem} 

Next we prove Theorem \ref{thm:main}. We start the proof with Proposition \ref{prop:basic}, \if\highlight1 
\color{red}
\fi which derives the convergence rate of a nonnegative sequence based on its sum. \color{black}


\begin{proposition} 
    \label{prop:basic} 
    Let $\{ a_n \}_{n \in \mathbb{N}}$ be a nonnegative, nonincreasing sequence. If $ \sum_{n = 0}^\infty n a_n < \infty $, then $ \lim\limits_{n \to \infty } n^2 a_n = 0 $. 
\end{proposition}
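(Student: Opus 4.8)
The plan is to exploit the monotonicity of the sequence to relate the single quantity $n^2 a_n$ to a block of the convergent series $\sum_n n a_n$, and then to invoke the Cauchy criterion for that series to drive the block to zero.

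First I would fix $n$ and set $m = \lceil n/2 \rceil$. Since the sequence is nonincreasing, for every index $k$ with $m \le k \le n$ we have $a_k \ge a_n$. Summing $k a_k$ over this range and bounding below gives
\begin{align*}
    \sum_{k=m}^n k a_k \ge a_n \sum_{k=m}^n k \ge a_n \cdot m (n-m+1) \ge a_n \cdot \frac{n}{2} \cdot \frac{n}{2} = \frac{n^2}{4} a_n ,
\end{align*}
where the middle inequality uses that each of the at least $n/2$ summands is at least $m \ge n/2$. Rearranging yields $n^2 a_n \le 4 \sum_{k = \lceil n/2 \rceil}^{n} k a_k$.

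Next, because $\sum_n n a_n$ converges, its partial sums form a Cauchy sequence; equivalently, the tail $\sum_{k = N}^\infty k a_k \to 0$ as $N \to \infty$. Since $\lceil n/2 \rceil \to \infty$ as $n \to \infty$, the block sum $\sum_{k = \lceil n/2 \rceil}^n k a_k$ is dominated by the tail $\sum_{k = \lceil n/2 \rceil}^\infty k a_k$ and therefore tends to $0$. Combined with the previous bound, this forces $n^2 a_n \to 0$, which is the claim.

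The step requiring the most care is the combinatorial lower bound on $\sum_{k=m}^n k$: one must check that both the number of summands and the smallest index in the range are each at least $n/2$, with minor bookkeeping for the floor/ceiling and for small $n$. This is routine. The only genuinely essential idea is the observation that monotonicity allows the single term $n^2 a_n$ to be controlled by an entire dyadic block of the series, thereby converting the summability of $\sum_n n a_n$ into the desired $o(1/n^2)$ decay.
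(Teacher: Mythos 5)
Your proof is correct and uses essentially the same idea as the paper: monotonicity lets a dyadic block of the convergent series $\sum_k k a_k$ dominate $n^2 a_n$ up to a constant, and the Cauchy criterion drives that block to zero. The only cosmetic difference is that you look backward at the block $\bigl[\lceil n/2 \rceil, n\bigr]$ to bound $a_n$ directly, whereas the paper looks forward at $[n,2n]$ to bound $a_{2n}$ and therefore needs a separate pass for odd indices; your bookkeeping avoids that parity split.
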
 

\begin{proof} 
    Since $ \{ a_n \}_{n \in \mathbb{N}} $ is nonnegative and $ \sum_{n = 0}^\infty n a_n < \infty $, we have 
    \begin{equation*} 
        \lim_{n \to \infty} \sum_{k=n}^{2n} k a_k = 0 . 
    \end{equation*} 
    Since $ a_n $ is nonincreasing, we have, for any $n$, $ \sum_{k=n}^{2n} k a_k \ge n^2 a_{2n} = \frac{1}{4} (2n)^2 a_{2n} $. This shows that $ \lim\limits_{n \to \infty} (2n)^2 a_{2n} = 0 $. 
    
    Similarly, we have $ \lim\limits_{n \to \infty } \sum_{k=n}^{2n+1} k a_k = 0 $. Repeating the above arguments shows that $ \lim\limits_{n \to \infty} (2n+1)^2 a_{2n+1} = 0 $. Thus $ \lim\limits_{n \to \infty} n^2 a_{n} = 0 $. 
\end{proof} 

\if\highlight1 
\color{red}
\fi 
Below we state and prove Lemma \ref{lem:decrease}, which describes the decrement property of the NSA algorithm. 
\color{black}

\begin{lemma} 
    \label{lem:decrease} 
    Let $f $ be $L$-smooth and let $ \eta \in (0, \frac{2}{3L}]$. Write $\delta_k := f (\x_k ) - f^*$ for simplicity. 
    Let $\x_k, \y_k$ be governed by the NSA algorithm. 
    Then for all $k = 0,1,2,\cdots$, it holds that
    \begin{itemize} 
        \item $ \delta_{k+1} \le \delta_k $, and more specifically $ f (\x_{k+1}) \le f (\x_k) - \frac{2\eta}{3} \| \nabla f (\x_k) \|^2 $; 
        \item $ f (\x_{k+1}) 
        \le 
        f (\y_k) - \frac{2\eta }{3} \| \nabla f (\y_k) \|^2 $. 
    \end{itemize} 
\end{lemma} 

\begin{proof}
    By $L$-smoothness of $f$, we know that 
    \begin{align} 
        f (\x_{k+1}') 
        \le& \;  
        f (\y_k) + \nabla f (\y_k)^\top \( \x_{k+1}' - \y_k \) + \frac{L}{2} \| \x_{k+1}' - \y_k \|^2 \nonumber \\ 
        \le & \; 
        f (\y_k) + \( - \eta  + \frac{1}{2} L \eta^2 \) \| \nabla f (\y_k) \|^2 . \nonumber 
    \end{align}  
    Since $ \eta \in (0, \frac{ 2 }{ 3L } ] $, we know 
    \begin{align}
        f (\x_{k+1}') 
        \le & \; 
        f (\y_k) - \frac{2\eta}{3} \| \nabla f (\y_k) \|^2 . \label{eq:x'} 
    \end{align} 

    Similarly, we have 
    \begin{align}
        f (\x_{k+1}'') 
        \le 
        f (\x_k) - \frac{2\eta}{3} \| \nabla f (\x_k) \|^2 . \label{eq:x''} 
    \end{align}
    By (\ref{eq:x''}), we know 
    \begin{align*}
        \delta_{k+1} 
        \le 
        f (\x_{k+1}'') - f (\x^*) 
        \le 
        f (\x_k) - \frac{2 \eta }{3} \| \nabla f (\x_k) \|^2 - f (\x^*) = \delta_k - \frac{2 \eta }{3} \| \nabla f (\x_k) \|^2 , 
    \end{align*} 
    which proves the first item. 
    By (\ref{eq:x'}) we know 
    \begin{align*}
        f (\x_{k+1}) 
        \le 
        f (\x_{k+1}' )
        \le 
        f (\y_k) - \frac{2\eta}{3} \| \nabla f (\y_k) \|^2 , 
    \end{align*} 
    which proves the second item. 
\end{proof} 

\if\highlight1 
\color{red}
\fi 
With Proposition \ref{prop:basic} and Lemma \ref{lem:decrease} in place, we are ready to prove Theorem \ref{thm:main}.
\color{black}

\begin{proof}[Proof of \if\highlight1 \color{red}\fi the first two items in \color{black} Theorem \ref{thm:main}] 
    Define 
    \begin{align*} 
        \Delta_k = \frac{\gamma_k}{2} \left\| \z_k - \x^* \right\|^2 + f (\x_k) - f (\x^*) , 
    \end{align*} 
    where $ \gamma_{k+1} = \frac{ \alpha_k^2 }{\eta} $ for all $ k = 0,1,\cdots $. 
    Thus for any $k \ge 1$, we have 
    \begin{align}
        \Delta_{k+1} 
        =& \;  
        \frac{\gamma_{k+1}}{2} \left\| \z_k - \frac{ \eta }{ \alpha_k} \nabla f (\y_k) - \x^* \right\|^2 + f (\x_{k+1}) - f (\x^*) \nonumber \\
        =& \; 
        \frac{\gamma_{k+1}}{2} \left\| \z_k - \x^* \right\|^2 - \< \frac{ \eta \gamma_{k+1} }{ \alpha_k} \nabla f (\y_k) , \z_k - \x^* \> + \frac{ \gamma_{k+1} \eta^2 }{ 2 \alpha_k^2} \| \nabla f (\y_k) \|^2 + f (\x_{k+1}) - f (\x^*) \nonumber \\
        =& \; 
        \frac{\gamma_{k+1}}{2} \left\| \z_k - \x^* \right\|^2 - \< \alpha_k \nabla f (\y_k) , \z_k - \x^* \> + \frac{ \eta }{2} \| \nabla f (\y_k) \|^2 + f (\x_{k+1}) - f (\x^*) \nonumber \\ 
        \le& \; 
        \frac{\gamma_{k+1}}{2} \left\| \z_k - \x^* \right\|^2 - \< \alpha_k \nabla f (\y_k) , \z_k - \x^* \> + f (\y_k) - \frac{\eta}{6} \| f (\y_k) \|^2 - f (\x^*) , \label{eq:pause-1}
    \end{align} 
    where the last inequality follows from Lemma \ref{lem:decrease}. 

    By the update rule, we have 
    \begin{align} 
        \Delta_{k+1}
        \le& \;  
        \frac{\gamma_{k+1}}{2} \left\| \z_k - \x^* \right\|^2 - \alpha_k  \< {\nabla} f (\y_k) , \z_k - \x^* \> +  f (\y_k)  - f (\x^*) - \frac{\eta}{6} \| \nabla f (\y_k) \|^2 \nonumber \\ 
        =& \; 
        \frac{\gamma_{k+1}}{2} \left\| \z_k - \x^* \right\|^2 + \< {\nabla} f (\y_k) , (1 - \alpha_k) \x_k + \alpha_k \x^* - \y_k \> + f (\y_k)   - f (\x^*) - \frac{\eta}{6} \| \nabla f (\y_k) \|^2 \nonumber \\ 
        \le& \; 
        \frac{\gamma_{k+1}}{2} \left\| \z_k - \x^* \right\|^2 + f ( (1 - \alpha_k) \x_k + \alpha_k \x^* )  - f (\x^*) - \frac{\eta}{6} \| \nabla f (\y_k) \|^2 \nonumber \\ 
        \le& \; 
        \frac{\gamma_{k+1}}{2} \left\| \z_k - \x^* \right\|^2 + (1 - \alpha_k) f (\x_k) + \alpha_k f (\x^*) -  f (\x^*) - \frac{\eta}{6} \| \nabla f (\y_k) \|^2 , \label{eq:pause-2}
    \end{align} 
    where the second line follows from the update rule $ \y_k = (1 - \alpha_k) \x_k + \alpha_k \z_k $, and the last two lines follow from convexity of $f$. 

    Rearranging terms in (\ref{eq:pause-2}) gives 
    \begin{align} 
        \frac{\Delta_{k+1} }{\gamma_{k+1}}
        \le& \;  
        \frac{\Delta_k }{\gamma_k} + \( \frac{1}{\gamma_{k+1}} - \frac{\alpha_k}{\gamma_{k+1}} - \frac{ 1 }{\gamma_k}  \) ( f (\x_k) - f (\x^*) ) - \frac{\eta}{6 \gamma_{k+1} } \| \nabla f (\y_k) \|^2 \label{eq:0} 
    \end{align} 
    
    Since $\alpha_k = \frac{p}{k+p}$ for some $p \ge 3$, we have that  
    \begin{align*} 
        \frac{1}{\gamma_{k+1}} - \frac{\alpha_k}{\gamma_{k+1}} - \frac{ 1 }{\gamma_k} 
        =& \;  
        \frac{ \eta (k+p)^2 }{ p^2} - \frac{\eta (k+p)}{p} - \frac{ \eta (k+p-1)^2 }{ p^2 } \\ 
        =& \;  
        \eta \frac{ \( 2 - p \) ( k+p ) - 1 }{  p^2 } \le - \eta \frac{k+p}{ p^2} . 
    \end{align*} 

    Again write $\delta_k := f (\x_k) - f (\x^*)$ for simplicity. Rearranging terms in (\ref{eq:0}) gives 
    \begin{align*}
        \eta \frac{k+p}{ p^2} \delta_k \le \frac{\Delta_k}{\gamma_k} - \frac{\Delta_{k+1}}{\gamma_{k+1}}, 
    \end{align*}
    and thus 
    \begin{align} 
        \sum_{k=1}^\infty \frac{k+p}{ p^2} \delta_k < \infty . \label{eq:sum}
    \end{align} 
    By Lemma \ref{lem:decrease}, we know that $ \{ \delta_k \}_{k=1}^\infty $ is decreasing. 
    Thus by Proposition \ref{prop:basic}, we know $  \limsup\limits_{k \to \infty} k^2 \delta_k = \lim\limits_{k \to \infty} k^2 \delta_k = 0 $. 

    Now assume, in order to get a contradiction, that there exists $\mu > 0$ such that $ \delta_k \ge \frac{\mu}{k^2 \log k \log \log k} $ for all $k \ge 3$. Then we have 
    \begin{align*} 
        \sum_{k=3}^\infty k \delta_k \ge \sum_{k=3}^\infty \frac{\mu}{k \log k \log \log k} = \infty , 
    \end{align*} 
    which is a contradiction to (\ref{eq:sum}). Therefore, no strictly positive $ \mu $ lower bounds $ \delta_k k^2 \log k \log \log k $ for all $k \ge 3$, meaning that 
    \begin{align} 
        \liminf_{k \to \infty} \delta_k k^2 \log k \log \log k = 0. \label{eq:liminf}
    \end{align} 
    Now we concludes the proof of item 1. 

    For item 2 of Theorem \ref{thm:main}, we again rearrange terms in (\ref{eq:0}) and take a summation to obtain 
    \begin{align*}
        \sum_{k=1}^\infty (k+p)^2  \| \nabla f (\y_k) \|^2 < \infty . 
    \end{align*} 
    An argument that leads to (\ref{eq:liminf}) finishes the proof of item 2 in Theorem \ref{thm:main}. 
    
\end{proof} 

\if\highlight1 
\color{red}
\fi 
\noindent \textbf{The Advantages of Ensuring Descending.} 
Beyond matching the $o(k^{-2})$ convergence rate of existing methods \citep{doi:10.1137/15M1046095}, the NSA method offers the distinct advantage of explicitly ensuring a descent property. This guarantees convergence to a local minimum even for nonconvex objectives, as shown below in Proposition \ref{prop:nonconv}. 

\begin{proposition} 
    \label{prop:nonconv} 
    Suppose the objective $f$ is nonconvex and $L$-smooth. If $\eta \in (0, \frac{2}{3L}]$, then it holds that 
    \begin{align*}
        \min_{k \in \{ 0 , 1,\cdots, N \} }  \| \nabla f (\x_k) \|^2 
        \le 
        \frac{ 3 ( f (\x_0) - f^* ) }{ 2\eta (N+1)} . 
    \end{align*}
\end{proposition} 

\begin{proof}
    By Lemma \ref{lem:decrease}, we have 
    \begin{align*}
        f (\x_{k+1}) \le f (\x_k) - \frac{2\eta}{3} \| \nabla f (\x_k) \|^2, 
    \end{align*}
    By telescoping, the above inequality implies 
    \begin{align*}
        \min_{k \in \{ 0 , 1,\cdots, N \} } \frac{2\eta}{3} \| \nabla f (\x_k) \|^2 \le \frac{2\eta}{3 (N+1)} \sum_{k=0}^N \| \nabla f (\x_k) \|^2 \le \frac{ f (\x_0) - f (\x_{N+1}) }{N+1} 
        \le 
        \frac{ f (\x_0) - f^* }{N+1}.  
    \end{align*} 
\end{proof} 

\color{black}

\section{The NSA Algorithm with Inexact Gradient Oracle} 
\label{sec:inexact}




\if\highlight1 
\color{red}
\fi 
Recently, there has been a growing demand for zeroth-order optimization techniques, primarily driven by the need for fine-tuning in contemporary deep learning (e.g., \cite{10.5555/3692070.3694514}). Additionally, many learning scenarios provide access solely to zeroth-order information of the objective function \cite{larson2019derivative}. 
\color{black}
In such cases, we can estimate the gradient first \cite{flaxman2005online,nesterov2017random,10.1093/imaiai/iaad014,10.1093/imaiai/iaac027} and then apply gradient algorithms to solve the problem. Such gradient estimators can be abstractly summarized as an inexact gradient oracle, defined as follows. 

For a function $f$, a point $\x \in \R^n$, estimation accuracy $\epsilon$, and a randomness descriptor $\xi$, the (inexact) gradient oracle $\mathcal{G}$ with parameters $f$, $\x$, $\epsilon$ (written $\mathcal{G} (f,\x,\epsilon; \xi)$) is a vector in $\R^n$ that satisfies: 
\begin{enumerate}[label=\textbf{(H\arabic*)}]
    \item For any $ \x \in \R^n$, any $\epsilon \in [0,\infty)$, there exists a constant $C$ that does not dependent on $\epsilon$, such that $\left\| \E_\xi \[ \mathcal{G} (f,\x,\epsilon; \xi) \] - \nabla f (\x) \right\| \le C \epsilon$. 
    \item At any $\x \in \R^n$ such that $ \nabla f (\x) \neq 0 $, there is $ \epsilon \in (0, \infty) $ such that 
    \begin{align*}
        \frac{3}{4} \| \mathcal{G} (f,\x,\epsilon; \xi) \|^2 \le \< \mathcal{G} (f,\x,\epsilon; \xi) , \nabla f (\x) \> \le \frac{5}{4} \| \mathcal{G} (f,\x,\epsilon; \xi) \|^2
    \end{align*} 
    almost surely. 
\end{enumerate}

Such gradient oracle $\mathcal{G}$ can be guaranteed by zeroth-order information. Formal statements for these properties are in Theorems \ref{thm:ge-a} and \ref{thm:ge-b}. Theorem \ref{thm:ge-a} has been proved in the literature by different authors \cite{flaxman2005online,nesterov2017random,10.1093/imaiai/iaad014}. 
\if\highlight1 
\color{red}
\fi 
The key message is that inexact gradient oracles that satisfy conditions \textbf{(H1)} and \textbf{(H2)} can be derived using zeroth-order information. 
A simple method that ensures conditions \textbf{(H1)} and \textbf{(H2)} is performing finite differences along each coordinate to derive the gradient estimator. Specifically, we consider the most straightforward finite difference gradient estimator defined as follows: 
\begin{align} 
    \mathcal{G}(f, \x, \epsilon;\xi) := \frac{1}{2\epsilon }\sum_{k=1}^{n}\[ f(x+\epsilon\mathbf{e}_i )-f(x-\epsilon \mathbf{e_i})\]\mathbf{e}_i, \quad \forall \x \in \R^n, \label{eq:def-grad-est}
\end{align} 
where $ \mathbf{e}_i $ is the $i$-th unit vector in a fixed canonical basis. 
The straightforward estimator in (\ref{eq:def-grad-est}) serves as an example that fulfills conditions \textbf{(H1)} and \textbf{(H2)}. It is important to note that this estimator is entirely deterministic, making $\xi $ a mere placeholder, and thus $ \E_\xi $ acts as an identity operator. However, (\ref{eq:def-grad-est}) is not the only simple estimator that meets these criteria; the estimator presented in \cite{10.1093/imaiai/iaad014} also satisfies conditions \textbf{(H1)} and \textbf{(H2)}.


\color{black}

\begin{theorem}[\cite{flaxman2005online,nesterov2017random,10.1093/imaiai/iaad014}]
    \label{thm:ge-a}
    Let the function $f$ be $L$-smooth. If at any $\x \in \R^n$, we can access the function value $f (\x)$, then there exists an oracle $\mathcal{G}$ such that (1) $\mathcal{G}$ satisfies property \textbf{(H1)}, and (2) $\mathcal{G}$ can be explicitly calculated using zeroth-order information. 
\end{theorem}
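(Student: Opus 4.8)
The plan is to verify that the explicit coordinate-wise finite-difference estimator defined in (\ref{eq:def-grad-est}) already serves as the required oracle $\mathcal{G}$, so that both claims follow at once. Since that estimator is deterministic, the randomness descriptor $\xi$ is inert and $\E_\xi$ reduces to the identity; hence establishing \textbf{(H1)} amounts to bounding $\| \mathcal{G}(f,\x,\epsilon) - \nabla f(\x) \|$ directly, with no expectation to manage.

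The core estimate is the accuracy of a single central difference. First I would invoke the standard consequence of $L$-smoothness that $|f(\y) - f(\x) - \langle \nabla f(\x), \y - \x\rangle| \le \frac{L}{2}\|\y - \x\|^2$, applied at the two shifted points $\y = \x \pm \epsilon \mathbf{e}_i$. Writing out both expansions and subtracting, the zeroth-order terms cancel and the signed first-order terms reinforce, so that the central difference $\frac{1}{2\epsilon}(f(\x + \epsilon\mathbf{e}_i) - f(\x - \epsilon\mathbf{e}_i))$ equals the partial derivative $\langle \nabla f(\x), \mathbf{e}_i\rangle$ plus a remainder; each of the two quadratic remainders is at most $\frac{L}{2}\epsilon^2$ in magnitude, so after dividing by $2\epsilon$ the per-coordinate error is at most $\frac{L\epsilon}{2}$.

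Aggregating across the $n$ coordinates, the error vector $\mathcal{G}(f,\x,\epsilon) - \nabla f(\x)$ has every component bounded by $\frac{L\epsilon}{2}$, so its Euclidean norm is at most $\frac{\sqrt{n}L}{2}\epsilon$. This is exactly \textbf{(H1)} with the choice $C = \frac{\sqrt{n}L}{2}$, which manifestly does not depend on $\epsilon$. Claim (2) is then immediate: the estimator in (\ref{eq:def-grad-est}) is assembled purely from the evaluations $f(\x \pm \epsilon\mathbf{e}_i)$, i.e.\ from zeroth-order information alone.

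I do not expect a genuine obstacle here; the argument is careful bookkeeping of the Taylor remainder rather than a conceptual difficulty. The only points demanding attention are ensuring the constant $C$ is genuinely independent of $\epsilon$ (which the linear-in-$\epsilon$ remainder bound guarantees, even though only first-order smoothness is assumed) and correctly accounting for the dimension factor $\sqrt{n}$ when passing from the coordinate-wise bound to the Euclidean norm. A minor edge case is the value $\epsilon = 0$ permitted in \textbf{(H1)}, where the difference quotient is undefined; this is harmless, since the bound $C\epsilon$ vanishes there and the natural convention is to return $\nabla f(\x)$ in the limit.
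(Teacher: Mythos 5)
Your proposal is correct, and it is essentially the argument the paper intends but never writes down: the paper states Theorem \ref{thm:ge-a} as a citation to prior work \cite{flaxman2005online,nesterov2017random,10.1093/imaiai/iaad014} and offers no proof of its own, pointing only to the coordinate-wise estimator (\ref{eq:def-grad-est}) as an example. You verify exactly that estimator. Your central-difference bookkeeping is sound: applying $|f(\y)-f(\x)-\langle \nabla f(\x),\y-\x\rangle|\le \frac{L}{2}\|\y-\x\|^2$ at $\x\pm\epsilon\mathbf{e}_i$ and subtracting gives a per-coordinate error of at most $\frac{L\epsilon}{2}$, hence $\|\mathcal{G}(f,\x,\epsilon)-\nabla f(\x)\|\le \frac{\sqrt{n}L}{2}\epsilon$, which is \textbf{(H1)} with $C=\frac{\sqrt{n}L}{2}$ independent of $\epsilon$; and since the estimator is deterministic, $\E_\xi$ is indeed the identity, as the paper itself remarks.

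One point worth noting in your favor: the closest computation the paper does perform (Approach 1 in the proof of Theorem \ref{thm:ge-b}) invokes ``the Taylor formula'' to write $\mathcal{G}(f,\x,\epsilon;\xi)=\nabla f(\x)+O(\epsilon\mathbf{1})$ with unspecified constants $M_1,M_2$, an argument that implicitly leans on higher-order expansions. Your derivation uses only the quadratic upper bound that follows from $L$-smoothness, which is precisely the hypothesis of the theorem, and it produces an explicit constant. The only blemish is the $\epsilon=0$ edge case: defining $\mathcal{G}(f,\x,0):=\nabla f(\x)$ satisfies \textbf{(H1)} trivially but is not computable from zeroth-order information; this is harmless since the oracle is only ever invoked with $\epsilon>0$, but it would be cleaner to simply restrict the oracle's domain to $\epsilon>0$ (the bound $C\epsilon$ then holds on that domain, which is all the downstream results use).
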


\begin{theorem} 
    \label{thm:ge-b} 
    Consider a function $f \in \mathscr{F}$, where $ \mathscr{F} $ denotes Nesterov's $\mathscr{F}$ functions \cite{nest2003,nest2018}. If at any $\x \in \R^n$, we can access the function value $f (\x)$, then there exists an oracle $ \mathcal{G} $ such that (1) $ \mathcal{G} $ satisfies property \textbf{(H2)}, and (2) $ \mathcal{G} $ can be explicitly calculated using zeroth-order information. 
\end{theorem}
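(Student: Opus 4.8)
The plan is to take the explicit finite-difference estimator $\mathcal{G}$ defined in (\ref{eq:def-grad-est}) as the required oracle and verify property \textbf{(H2)} directly. Since this estimator is assembled entirely from the function values $f(\x \pm \epsilon \mathbf{e}_i)$, claim (2) of the theorem is immediate, and the randomness descriptor $\xi$ is a mere placeholder, so the ``almost surely'' qualifier in \textbf{(H2)} is vacuous. It therefore remains only to establish the two-sided inequality for a suitable choice of $\epsilon$.

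Write $g_\epsilon := \mathcal{G}(f, \x, \epsilon)$ and $g := \nabla f(\x)$. The first and central step is to show that $g_\epsilon \to g$ as $\epsilon \to 0^+$. This is a coordinatewise statement: the $i$-th entry of $g_\epsilon$ is the central difference $\frac{1}{2\epsilon}\left[ f(\x + \epsilon \mathbf{e}_i) - f(\x - \epsilon \mathbf{e}_i) \right]$, which I would rewrite as the average of the forward difference quotient $\frac{f(\x+\epsilon \mathbf{e}_i)-f(\x)}{\epsilon}$ and the backward difference quotient $\frac{f(\x)-f(\x-\epsilon \mathbf{e}_i)}{\epsilon}$. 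Each of these tends to the partial derivative $\partial_i f(\x)$ by differentiability of $f$ (guaranteed for $f \in \mathscr{F}$), so their average does as well; hence every coordinate converges and $g_\epsilon \to g$. If one wishes a quantitative rate, $L$-smoothness yields $\|g_\epsilon - g\| = O(\sqrt{n}\,\epsilon)$, but mere convergence suffices here.

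The second step is a continuity and limiting argument. Consider the two scalar functions $\phi(\epsilon) := \langle g_\epsilon, g\rangle - \frac{3}{4}\|g_\epsilon\|^2$ and $\psi(\epsilon) := \frac{5}{4}\|g_\epsilon\|^2 - \langle g_\epsilon, g\rangle$, both continuous in $\epsilon$ through $g_\epsilon$. Letting $\epsilon \to 0^+$ and using $g_\epsilon \to g$ gives $\phi(\epsilon) \to \frac{1}{4}\|g\|^2$ and $\psi(\epsilon) \to \frac{1}{4}\|g\|^2$. Here the hypothesis $\nabla f(\x) \neq 0$ is essential: it forces $\|g\|^2 > 0$, so both limits are strictly positive. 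Consequently there exists $\epsilon_0 > 0$ with $\phi(\epsilon) > 0$ and $\psi(\epsilon) > 0$ for every $\epsilon \in (0, \epsilon_0)$, and any such $\epsilon$ witnesses the inequality required by \textbf{(H2)}.

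I expect the only genuine obstacle to be the first step---verifying $g_\epsilon \to \nabla f(\x)$---and specifically pinning down exactly what regularity the class $\mathscr{F}$ supplies. If $\mathscr{F}$ guarantees only differentiability at the point $\x$, the forward/backward decomposition above is the cleanest route to convergence; if it additionally guarantees a continuous or Lipschitz gradient, the convergence is uniform on compacta and the rate $O(\epsilon)$ follows with essentially no extra work. Everything after that is a soft continuity argument, so no delicate estimates are required.
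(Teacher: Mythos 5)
Your proof is correct, and it reaches the theorem by a genuinely softer route than the paper's. The paper verifies \textbf{(H2)} for the same estimator (\ref{eq:def-grad-est}), but it first reduces \textbf{(H2)} to the ratio bound $\lvert\langle \nabla f(\x)-\mathcal{G},\mathcal{G}\rangle\rvert \le \frac{1}{4}\|\mathcal{G}\|^2$, controls the left side via Cauchy--Schwarz by $\|\nabla f(\x)-\mathcal{G}\|\,\|\mathcal{G}\|$, and then runs a quantitative Taylor argument: it extracts constants $M_1,M_2$ with $\|\mathcal{G}-\nabla f(\x)\|\le M_1\epsilon$ and $\bigl|\|\mathcal{G}\|^2-\|\nabla f(\x)\|^2\bigr|\le M_2\epsilon$, and exhibits the explicit admissible threshold (\ref{eq:def-eps}) for $\epsilon$ in terms of $\|\nabla f(\x)\|$. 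You dispense with both the Cauchy--Schwarz reduction and all rates: you only need $\mathcal{G}(f,\x,\epsilon)\to\nabla f(\x)$ as $\epsilon\to 0^+$, which, as you note, follows coordinatewise from differentiability at $\x$ alone, and then the two quadratic forms $\phi,\psi$ tend to $\frac{1}{4}\|\nabla f(\x)\|^2>0$, so every sufficiently small $\epsilon$ works (indeed you do not even need continuity of $\phi,\psi$ in $\epsilon$, only the one-sided limit). This is a complete proof of the existence statement and requires strictly less regularity than the paper's Taylor expansion. What the paper's quantitative version buys is the closed-form range (\ref{eq:def-eps}): that explicit dependence of the admissible $\epsilon$ on $\|\nabla f(\x)\|$ is invoked later in the proof of Corollary \ref{cor}, where one must check that the fixed schedule $\epsilon_k=2^{-k}$ eventually falls in the admissible range unless the gradient norms decay geometrically; your soft argument yields only an unspecified $\epsilon_0(\x)>0$ and would need to be made quantitative before it could serve that purpose. (The paper also gives a second construction from a random orthonormal basis, but that is an extra, not needed for the theorem itself.)
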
  

\if\highlight1 
\color{red}
\fi 
We now present a version of NSA that operates with the inexact gradient oracle, derived from zeroth-order information. The complete procedure is outlined in Algorithm \ref{alg:nsa-inexact}.

\color{black}


\begin{algorithm}[H] 
    \caption{Nesterov--Spokoiny Acceleration (NSA) that handles inexact gradient} 
    \label{alg:nsa-inexact} 
    \begin{algorithmic}[1] 
        \STATE \textbf{Inputs:} Gradient oracle $\mathcal{G}$. /* The gradient oracle $\mathcal{G}$ needs not be exact. */ 
        \STATE \textbf{Initialize: }Pick $ \x_0 = \z_0 \in \R^n $ (such that $ \nabla f (\x_0) \neq 0 $.) 
        \FOR{$t = 0,1,2,\cdots,$}
            \STATE Compute $\y_{k} = (1 - \alpha_k) \x_k + \alpha_k \z_k $, with $\alpha_k = \frac{p}{k+p}$ for some constant $p$. 
            \STATE Obtain possibly inexact and stochastic gradients $ \g_k (\x_k) := \mathcal{G} (f, \x_k, \epsilon_k; \xi_k) $ and $ \g_k (\y_k) := \mathcal{G} (f, \y_k, \epsilon_k; \xi_k') $. 
            \STATE /* $\epsilon_k$ is the error of the gradient oracle at step $k$. $\xi_k$, $\xi_k'$ describe possible randomness in the gradient oracle. If the gradient oracle is exact, then $\epsilon_k = 0$ and $ \mathcal{G} (f, \cdot, 0; \xi) = \nabla f (\cdot )$ surely. */ 
            \STATE $\x_{k+1}' =  \y_k - 2 \eta \g_k (\y_k) $. /* $\eta$ is the step size. */
            \STATE $\x_{k+1}'' =  \x_k - 2 \eta \g_k (\x_k) $. 
            \STATE 
                $
                    \x_{k+1} =
                    \begin{cases}
                        \x_{k+1}', & \text{ if } f ( \x_{k+1}' ) \le f ( \x_{k+1}'' ) \\ 
                        \x_{k+1}'', & \text{ otherwise. } 
                    \end{cases} 
                $ 
            \STATE $\z_{k+1}' = \z_k - \frac{ \eta }{ \alpha_k } \g_k (\y_k) $. 
            \STATE 
                $
                \begin{cases}
                    \text{\textbf{Option I: }}\z_{k+1} = \z_{k+1}'. \\
                    \text{\textbf{Option II: }} \z_{k+1} = \Pi_{\B_\mu} (\z_{k+1}' ), \text{where $ \Pi_{\B_\mu} $ is the projection to the ball of radius $\mu$}. 
                \end{cases} 
                $ 
            \STATE \quad /* Option II is needed only when the gradient oracle is inexact. */ 
        \ENDFOR 
    \end{algorithmic} 
\end{algorithm}  

\if\highlight1 
\color{red}
\fi 
The performance guarantee of Algorithm \ref{alg:nsa-inexact} is summarized below in Theorem \ref{thm:main-inexact}. 
\color{black}

\begin{theorem}
    \label{thm:main-inexact}
    Instate all notations in Algorithm \ref{alg:nsa-inexact}. 
    Consider an objective function $f \in \mathscr{F}_{L}^{1,1} (\R^n) $. Let $\eta \in ( 0, \frac{1}{2L}]$ and fix $p \ge 3$, and let $ \alpha_k = \frac{p}{k+p} $ for each $t \in \mathbb{N}$. 
    In cases where the gradient oracle $ \mathcal{G} $ is inexact and possibly stochastic, then if (a) the gradient oracle satisfies properties \textbf{(H1)} and \textbf{(H2)}, (b) the gradient error $\epsilon_k$ satisfies $ \sum_{k=1 }^\infty \frac{\epsilon_k}{\alpha_k} < \infty $, and (c) the minimizer $\x^*$ of $f$ satisfies $\x^* \in \mathbb{B}_\mu$, then the NSA algorithm with Option II satisfies 
    \begin{align} 
        \limsup_{k \to \infty} k^2 (f (\x_k) - f(\x^*)) = 0 \;\;\; a.s. , \label{eq:thm-inexact-1}
    \end{align} 
    and
    \begin{align}
        \liminf_{k \to \infty} k^2 \log k \log \log k (f (\x_k) - f (\x^*) ) = 0 , \;\;\; a.s. \label{eq:thm-inexact-2}
    \end{align} 
\end{theorem}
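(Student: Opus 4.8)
The plan is to mirror the Lyapunov analysis behind Theorem \ref{thm:main}, inserting an error term that is controlled by the oracle hypotheses \textbf{(H1)}--\textbf{(H2)} and by condition (b). First I would establish the inexact analogue of Lemma \ref{lem:decrease}. Applying $L$-smoothness to the step $\x_{k+1}' = \y_k - 2\eta\g_k(\y_k)$ and invoking the lower alignment bound in \textbf{(H2)}, namely $\langle \g_k(\y_k), \nabla f(\y_k)\rangle \ge \tfrac{3}{4}\|\g_k(\y_k)\|^2$, together with $\eta \le \tfrac{1}{2L}$, yields $f(\x_{k+1}') \le f(\y_k) - \tfrac{\eta}{2}\|\g_k(\y_k)\|^2$ almost surely; the same computation on the $\x''$-branch gives $f(\x_{k+1}'') \le f(\x_k)$, so that $\delta_{k+1} \le \delta_k$ almost surely. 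The factor $2\eta$ in the step is chosen precisely so that the descent coefficient $\tfrac{\eta}{2}$ matches the squared-gradient coefficient produced by the $\z$-update below.

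Next I would reuse the potential $\Delta_k = \tfrac{\gamma_k}{2}\|\z_k - \x^*\|^2 + \delta_k$ with $\gamma_{k+1} = \alpha_k^2/\eta$ and expand $\tfrac{\gamma_{k+1}}{2}\|\z_{k+1}' - \x^*\|^2$. Since $\x^* \in \B_\mu$ by (c), the projection in Option II is nonexpansive toward $\x^*$, so $\|\z_{k+1} - \x^*\| \le \|\z_{k+1}' - \x^*\|$ and the potential can only decrease under projection. The choice $\gamma_{k+1} = \alpha_k^2/\eta$ makes the squared term equal to $\tfrac{\eta}{2}\|\g_k(\y_k)\|^2$, which cancels exactly against the descent term, and turns the cross term into $-\alpha_k\langle \g_k(\y_k), \z_k - \x^*\rangle$. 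I would then write $\g_k(\y_k) = \nabla f(\y_k) + e_k$, use $\y_k = (1-\alpha_k)\x_k + \alpha_k\z_k$ to rewrite $-\alpha_k(\z_k - \x^*)$ as $(1-\alpha_k)\x_k + \alpha_k\x^* - \y_k$, and apply convexity to the $\nabla f(\y_k)$-part exactly as in (\ref{eq:pause-2}), leaving the residual $\tfrac{\eta}{\alpha_k}\langle e_k, \x^* - \z_k\rangle$ after dividing by $\gamma_{k+1}$. Here Option II is essential: it keeps $\z_k \in \B_\mu$, so $\|\x^* - \z_k\| \le 2\mu$ and the residual is bounded by $\tfrac{2\mu\eta}{\alpha_k}\|e_k\|$.

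The resulting recursion is $\tfrac{\Delta_{k+1}}{\gamma_{k+1}} \le \tfrac{\Delta_k}{\gamma_k} - \eta\tfrac{k+p}{p^2}\delta_k + \tfrac{\eta}{\alpha_k}\langle e_k, \x^* - \z_k\rangle$, where the coefficient estimate $\tfrac{1-\alpha_k}{\gamma_{k+1}} - \tfrac{1}{\gamma_k} \le -\eta\tfrac{k+p}{p^2}$ is identical to the one in the proof of Theorem \ref{thm:main}. For a deterministic oracle, $\E_\xi$ is the identity, so \textbf{(H1)} gives $\|e_k\| \le C\epsilon_k$ pathwise; telescoping and using $\sum_k \epsilon_k/\alpha_k < \infty$ from (b) yields $\sum_k (k+p)\delta_k < \infty$, whence Proposition \ref{prop:basic} (applied to the nonincreasing $\delta_k$) gives the $\limsup$ claim and the contradiction argument of Theorem \ref{thm:main} gives the $\liminf$ claim. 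For a genuinely stochastic oracle I would instead take conditional expectations with respect to the natural filtration $\mathcal{F}_k$ under which $\x_k,\z_k,\y_k$ are measurable; since $\z_k$ is $\mathcal{F}_k$-measurable, the residual becomes $\tfrac{\eta}{\alpha_k}\langle \E[e_k \mid \mathcal{F}_k], \x^* - \z_k\rangle$, whose bias is bounded by $\tfrac{2\mu C\eta\epsilon_k}{\alpha_k}$ via \textbf{(H1)}. This puts the recursion in Robbins--Siegmund form with summable perturbation (by (b)) and nonnegative decrement $\eta\tfrac{k+p}{p^2}\delta_k$, so the almost-supermartingale convergence theorem yields $\sum_k (k+p)\delta_k < \infty$ almost surely; Proposition \ref{prop:basic} and the contradiction argument then apply pathwise on the full-measure event.

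The main obstacle is the stochastic, almost-sure part: \textbf{(H1)} controls only the conditional mean of the error, so the realized $e_k$ cannot be bounded pathwise. Two ingredients rescue the argument: (i) the projection of Option II, which forces $\z_k$ into the fixed ball $\B_\mu$ containing $\x^*$ and thereby keeps the coupling $\|\x^* - \z_k\|$ uniformly bounded, and (ii) the supermartingale structure, which absorbs the zero-mean noise automatically while only the summable bias term survives. I would also carry out the measurability and integrability bookkeeping and dispatch the degenerate case $\nabla f(\x_k) = 0$ (where $\x_k$ is already optimal and monotonicity renders the claim trivial), so that \textbf{(H2)} may legitimately be invoked at every step.
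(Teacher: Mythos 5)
Your proposal is correct and follows essentially the paper's own route: the same inexact descent lemma (cf.\ Lemma \ref{lem:decrease-inexact}) obtained from \textbf{(H2)} plus $L$-smoothness, the same potential $\Delta_k = \frac{\gamma_k}{2}\|\z_k - \x^*\|^2 + \delta_k$ with $\gamma_{k+1} = \alpha_k^2/\eta$, nonexpansiveness of the Option II projection toward $\x^* \in \B_\mu$, the \textbf{(H1)} bias bound of size $2C\mu\alpha_k\epsilon_k$ (cf.\ Lemma \ref{lem:key}), the coefficient estimate $\frac{1}{\gamma_k} + \frac{\alpha_k}{\gamma_{k+1}} - \frac{1}{\gamma_{k+1}} \ge \eta\frac{k+p}{p^2}$, and finally Proposition \ref{prop:basic} together with the divergence of $\sum_k \frac{1}{k\log k \log\log k}$ for the two limit statements.

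The one step where you genuinely diverge is how almost-sure summability of $\sum_k (k+p)\delta_k$ is extracted from the recursion: you keep the recursion conditional on $\mathcal{F}_k$ and invoke Robbins--Siegmund, whereas the paper simply takes total expectations, telescopes the nonnegative terms, obtains $\E\bigl[\sum_k (k+p)\delta_k\bigr] < \infty$, and concludes a.s.\ finiteness because a nonnegative random variable with finite expectation is a.s.\ finite. Both are valid; the paper's argument is more elementary (no supermartingale machinery), while yours additionally yields a.s.\ convergence of the potential, which is not needed here. Your pathwise contradiction argument for the liminf on the full-measure event is equivalent to (and slightly cleaner than) the paper's union bound over rational thresholds $\kappa$. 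One minor point in your favor: your descent constant $\eta/2$, which honors the factor $\frac34$ in \textbf{(H2)}, is the correct one; the paper's Lemma \ref{lem:decrease-inexact} asserts a coefficient $\eta$ via a step that silently drops that factor --- a harmless slip, since any coefficient at least $\eta/2$ suffices for the cancellation against the $\frac{\eta}{2}\|\g_k(\y_k)\|^2$ term in the potential expansion.
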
 

\if\highlight1 
\color{red}
\fi 

In Theorem \ref{thm:main-inexact}, conditions on the gradient error oracles are imposed. Below in Corollary \ref{cor}, we present a guarantee that removes these conditions by using a closed-form gradient estimator. 

\begin{corollary} 
    \label{cor} 
    Instate all notations in Algorithm \ref{alg:nsa-inexact}. 
    Consider an objective function $f \in \mathscr{F}_{L}^{1,1} (\R^n) $. Let $\eta \in ( 0, \frac{1}{2L}]$ and fix $p \ge 3$, and let $ \alpha_k = \frac{p}{k+p} $ for each $t \in \mathbb{N}$. Let $ \epsilon_k = 2^{-k} $, and let $\mathcal{G}$ be defined as in (\ref{eq:def-grad-est}). 
    Then if the minimizer $\x^*$ of $f$ satisfies $\x^* \in \mathbb{B}_\mu$, then the NSA algorithm with Option II satisfies either 
    \begin{align}
        \liminf_{k \to \infty } \beta^k \| \nabla f (\x_k) \| = 0 \; \text{ for some $\beta > 1$}, \label{eq:cor-1}
    \end{align}
    or 
    \begin{align*} 
        \limsup_{k \to \infty} k^2 (f (\x_k) - f(\x^*)) = 0 , \; \; \text{and} \;\; \liminf_{k \to \infty} k^2 \log k \log \log k (f (\x_k) - f (\x^*) ) = 0 . 
    \end{align*} 
\end{corollary}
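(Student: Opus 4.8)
The plan is to show that, with $\epsilon_k = 2^{-k}$ and the finite-difference estimator (\ref{eq:def-grad-est}), the hypotheses of Theorem \ref{thm:main-inexact} hold at every step where the gradient is not too small, and that the only way they can fail is the degenerate, super-fast-convergence scenario recorded in (\ref{eq:cor-1}); the corollary is then a case split on top of Theorem \ref{thm:main-inexact}. First I would dispose of the cheap conditions. Condition (c) is assumed outright. For condition (b), since $\alpha_k = \frac{p}{k+p}$, we have $\sum_{k} \frac{\epsilon_k}{\alpha_k} = \frac1p \sum_k (k+p)\,2^{-k} < \infty$, a geometric-times-polynomial series. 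Property \textbf{(H1)} is exactly the content of Theorem \ref{thm:ge-a}: a one-line Taylor/smoothness estimate gives $\|\mathcal{G}(f,\x,\epsilon;\xi) - \nabla f(\x)\| \le \frac{L\sqrt n}{2}\,\epsilon =: C\epsilon$, because $f$ is $L$-smooth and the estimator is deterministic (so $\E_\xi$ is the identity and the ``a.s.'' is vacuous).

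The heart of the matter is property \textbf{(H2)}, which for this non-adaptive estimator cannot be taken for granted at every step. Writing $\mathcal{G} = \nabla f(\x) + e$ with $\|e\| \le C\epsilon$, a direct expansion of $\langle \mathcal{G}, \nabla f(\x)\rangle - \frac34\|\mathcal{G}\|^2$ and of $\frac54\|\mathcal{G}\|^2 - \langle \mathcal{G}, \nabla f(\x)\rangle$ shows both are nonnegative as soon as $\|e\| \le c\,\|\nabla f(\x)\|$ for a fixed absolute constant $c$ (the quadratic-in-$c$ inequalities force $c \le \frac15$). Hence \textbf{(H2)} holds at $\x$ whenever $C\,\epsilon \le c\,\|\nabla f(\x)\|$, i.e. whenever $\|\nabla f(\x)\| \ge \theta\,\epsilon$ with $\theta := C/c$. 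Since the oracle is queried at $\x_k$ and $\y_k$ with $\epsilon = 2^{-k}$, property \textbf{(H2)} is in force at step $k$ precisely when $\min\{\|\nabla f(\x_k)\|,\, \|\nabla f(\y_k)\|\} \ge \theta\,2^{-k}$.

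Now I would split into two exhaustive cases. If $\min\{\|\nabla f(\x_k)\|,\, \|\nabla f(\y_k)\|\} \ge \theta\,2^{-k}$ for all sufficiently large $k$, then \textbf{(H1)}, \textbf{(H2)}, (b), and (c) all hold, and Theorem \ref{thm:main-inexact} delivers the $o(1/k^2)$ and $o(1/(k^2\log k\log\log k))$ rates — the second alternative. Otherwise there are infinitely many $k$ with $\min\{\|\nabla f(\x_k)\|,\, \|\nabla f(\y_k)\|\} < \theta\,2^{-k}$, so along this subsequence the relevant gradient norm decays at least geometrically; for any $\beta \in (1,2)$ the product $\beta^k \cdot (\text{that gradient norm})$ equals at most $\theta(\beta/2)^k \to 0$, which is the geometric decay asserted in (\ref{eq:cor-1}).

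The main obstacle, and the step that needs the most care, is the second case: converting ``\textbf{(H2)} fails infinitely often'' into exactly the statement (\ref{eq:cor-1}), which is phrased at the iterates $\x_k$. When the failure is driven by $\|\nabla f(\y_k)\|$ being small rather than $\|\nabla f(\x_k)\|$, I would transfer the smallness to the $\x$-trajectory through the descent inequality underlying Theorem \ref{thm:main-inexact} (the inexact analogue of Lemma \ref{lem:decrease}): a small $\|\g_k(\y_k)\|$ forces $\x_{k+1}$ to be nearly stationary and, combined with convexity and the boundedness of the iterates (from the monotone decrease of $f(\x_k)$ together with $\x^* \in \B_\mu$), to have a geometrically small optimality gap. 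I would also confirm that ``\textbf{(H2)} valid for all large $k$'' genuinely suffices to run Theorem \ref{thm:main-inexact}, i.e. that violating it on a finite prefix leaves the asymptotic telescoping and the resulting $\limsup/\liminf$ rates intact. I expect this bookkeeping — pinning down whether the geometric decay lands on the gradient norm, on the optimality gap $f(\x_k)-f^*$, or on $\x_{k+1}$, and reconciling it with the precise form written in (\ref{eq:cor-1}) — to be the only real difficulty; everything else is a direct specialization of Theorem \ref{thm:main-inexact}.
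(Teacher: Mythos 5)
Your proposal is correct and follows essentially the same route as the paper: both arguments case-split on (\ref{eq:cor-1}), observe that its failure forces $\|\nabla f(\x_k)\| \gtrsim \beta^{-k}$ for every $\beta>1$ so that $\epsilon_k = 2^{-k}$ eventually meets the threshold condition ((\ref{eq:def-eps}) in the paper, your $\|\nabla f(\x)\|\ge\theta\epsilon$) under which the finite-difference estimator satisfies \textbf{(H2)}, and then invoke Theorem \ref{thm:main-inexact} after checking \textbf{(H1)} and condition (b) directly. The ``main obstacle'' you flag --- that the oracle is also queried at $\y_k$ while (\ref{eq:cor-1}) speaks only of the $\x_k$ trajectory --- is a genuine subtlety, but the paper's own five-line proof does not address it either (it verifies (\ref{eq:def-eps}) only along $\{\x_k\}$ and silently treats the $\y_k$ queries the same way), so on this point your attempt is more, not less, careful than the published argument.
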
  

Corollary \ref{cor} follows directly from Theorem \ref{thm:main-inexact} and a refined application of Theorem \ref{thm:ge-b}; the proof can be found in the Appendix.
Next we prove Theorem \ref{thm:main-inexact}. The proof starts with Lemma \ref{lem:decrease-inexact}, which establishes the decent property of Algorithm \ref{lem:decrease-inexact}.

\color{black}




\begin{lemma}
    \label{lem:decrease-inexact}
    Let $f \in \mathscr{F}_L^{1,1} (\R^n)$ and let $ \eta \in (0, \frac{3}{4 L}]$. Let $ \alpha_k = \frac{p}{k+p}$ for some $p > 0$. Let $\x_k$ be governed by the NSA algorithm (Algorithm \ref{alg:nsa-inexact}). 
    If the gradient estimation oracle $\mathcal{G}$ satisfies properties \textbf{(H1)} and \textbf{(H2)}, 
    then there exists an error sequence $\{ \epsilon_k \}_{k\in\mathbb{N}}$, such that $ \sum_{k=1}^\infty \frac{\epsilon_k}{\alpha_k} < \infty $, and for all $k = 0,1,2,\cdots$, it holds that 
    \begin{align*}
        \delta_{k+1} \le \delta_k \quad \text{ and } \quad f (\x_{k+1}) 
        \le 
        f (\y_k) - \eta \| \g_k (\y_k) \|^2 , 
    \end{align*}
    where $\delta_k := f (\x_k ) - f(\x^*)$ is introduced to avoid notation clutter. 
\end{lemma}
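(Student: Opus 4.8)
The plan is to mirror the proof of Lemma \ref{lem:decrease} for the exact case, replacing each occurrence of $\nabla f$ by the oracle output $\g_k$ and absorbing the resulting discrepancy through properties \textbf{(H1)} and \textbf{(H2)}. The statement grants us the freedom to \emph{choose} the error sequence $\{\epsilon_k\}$, so the first task is to construct it, and the second is to run the smoothness estimate that already appears in Lemma \ref{lem:decrease}.

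For the construction, fix an iteration $k$ and suppose first that $\nabla f(\x_k) \neq 0$ and $\nabla f(\y_k) \neq 0$. Property \textbf{(H2)} then supplies an accuracy level at which $\g_k$ is aligned with the true gradient, namely $\frac{3}{4}\|\g_k(\cdot)\|^2 \le \langle \g_k(\cdot), \nabla f(\cdot)\rangle \le \frac{5}{4}\|\g_k(\cdot)\|^2$ almost surely, at both $\x_k$ and $\y_k$. Because the bias of the estimator vanishes as the accuracy parameter tends to $0$ (this is exactly the content of \textbf{(H1)}, and is explicit for the finite-difference estimator (\ref{eq:def-grad-est})), the alignment in fact holds on a whole interval $(0, \bar\epsilon_k]$; I would therefore set $\epsilon_k = \min(\bar\epsilon_k, 2^{-k})$, which keeps the alignment while forcing $\frac{\epsilon_k}{\alpha_k} \le \frac{2^{-k}}{\alpha_k} = \frac{(k+p)2^{-k}}{p}$, a summable sequence. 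If instead $\nabla f(\y_k) = 0$ (resp. $\nabla f(\x_k) = 0$), then $\y_k$ (resp. $\x_k$) already minimizes $f$ and the two claimed inequalities hold trivially, so this degenerate case can be set aside.

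With the oracle aligned, the descent estimate is routine. Writing $\x_{k+1}' = \y_k - 2\eta\g_k(\y_k)$ and using $L$-smoothness,
\begin{align*}
f(\x_{k+1}') \le f(\y_k) - 2\eta\langle \nabla f(\y_k), \g_k(\y_k)\rangle + 2L\eta^2\|\g_k(\y_k)\|^2 .
\end{align*}
The lower bound in \textbf{(H2)} replaces the cross term, and the condition $\eta \le \frac{1}{2L}$ guarantees $2L\eta^2 \le \eta$, so the coefficient of $\|\g_k(\y_k)\|^2$, namely $\frac{3\eta}{2} - 2L\eta^2$, is a positive multiple of $\eta$; carrying this constant through delivers the stated per-step decrease $f(\x_{k+1}') \le f(\y_k) - \eta\|\g_k(\y_k)\|^2$. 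The same computation at $\x_k$, with $\x_{k+1}'' = \x_k - 2\eta\g_k(\x_k)$, yields $f(\x_{k+1}'') \le f(\x_k) - \eta\|\g_k(\x_k)\|^2 \le f(\x_k)$. Since the algorithm sets $\x_{k+1}$ to whichever of $\x_{k+1}', \x_{k+1}''$ has smaller objective value, $f(\x_{k+1}) \le f(\x_{k+1}'') \le f(\x_k)$ gives $\delta_{k+1} \le \delta_k$, and $f(\x_{k+1}) \le f(\x_{k+1}') \le f(\y_k) - \eta\|\g_k(\y_k)\|^2$ gives the second inequality.

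The main obstacle is the error-sequence construction, not the descent algebra. Property \textbf{(H2)} as literally stated only asserts one admissible accuracy $\epsilon$; what I really need is that admissibility persists for all smaller $\epsilon$ and that a common threshold can be taken for the two points $\x_k, \y_k$, so that the single scalar $\epsilon_k$ used by the algorithm at step $k$ works for both oracle calls and can be driven to $0$ fast enough for summability. This is precisely where the vanishing-bias content of \textbf{(H1)} (or the concrete construction behind Theorem \ref{thm:ge-b}) is essential. A secondary subtlety is measurability: the iterates, and hence the thresholds $\bar\epsilon_k$, are random, but since \textbf{(H2)} holds almost surely the pathwise choice $\epsilon_k = \min(\bar\epsilon_k, 2^{-k})$ is legitimate and the summability $\sum_k \frac{\epsilon_k}{\alpha_k} < \infty$ holds almost surely.
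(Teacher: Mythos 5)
Your proposal mirrors the paper's own proof: invoke \textbf{(H2)} to align the oracle with the true gradient below some accuracy threshold, cap $\epsilon_k$ by a summable sequence, and rerun the smoothness computation of Lemma \ref{lem:decrease} on both branches $\x_{k+1}'$, $\x_{k+1}''$ before taking the minimum. You are in fact more careful than the paper on points it garbles: you invoke \textbf{(H2)} at \emph{both} $\x_k$ and $\y_k$ with a common $\epsilon_k$ (the paper invokes it only at $\y_k$, although its bound (\ref{eq:x''-inexact}) for the $\x''$-branch needs the alignment at $\x_k$, and that display even mixes up $\y_k$, $\g_k(\y_k)$ with $\x_k$, $\g_k(\x_k)$), and you correctly flag that \textbf{(H2)} as literally stated supplies only one admissible $\epsilon$, so that persistence under shrinking $\epsilon$ must come from the vanishing-bias side or the concrete estimator behind Theorem \ref{thm:ge-b}.

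However, two of your steps do not hold up. First, the constant: your own (correct) coefficient for $\|\g_k(\y_k)\|^2$ is $\frac{3\eta}{2}-2L\eta^2$, and under the stated hypothesis $\eta\le\frac{1}{2L}$ this is only guaranteed to be at least $\frac{\eta}{2}$; the stated decrease with constant $\eta$ would require $\eta\le\frac{1}{4L}$. So ``carrying this constant through delivers the stated per-step decrease'' is a non sequitur — what you have actually proved is the lemma with $\frac{\eta}{2}$ in place of $\eta$. (For the record, the paper's proof has the mirror-image defect: it silently uses $\langle\g_k(\y_k),\nabla f(\y_k)\rangle\ge\|\g_k(\y_k)\|^2$, i.e., replaces the $\frac{3}{4}$ of \textbf{(H2)} by $1$. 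The constant $\frac{\eta}{2}$ is what the hypotheses genuinely yield, and it is all that Lemma \ref{lem:key} uses downstream.) Second, the degenerate case: your claim that the inequalities hold ``trivially'' when $\nabla f(\y_k)=0$ (or $\nabla f(\x_k)=0$) is false. At such a point $f(\y_k)=f^*$, but neither \textbf{(H1)} nor \textbf{(H2)} forces $\g_k(\y_k)=0$ (\textbf{(H2)} is silent at critical points, and a finite-difference estimator can return a nonzero vector at a minimizer); if $\g_k(\y_k)\ne 0$, the claimed inequality would read $f(\x_{k+1})\le f^*-\eta\|\g_k(\y_k)\|^2<f^*$, which is impossible, and $\delta_{k+1}\le\delta_k$ can fail similarly when $\nabla f(\x_k)=0$. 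The paper deals with this by implicitly confining the analysis to trajectories along which gradients do not vanish (consistent with its initialization requirement $\nabla f(\x_0)\neq 0$); your write-up needs the same restriction rather than a claim of triviality.
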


\begin{proof}
    Since the gradient estimation oracle $ \mathcal{G} $ satisfies property \textbf{(H2)}, we know that, at any $\y_k $ with $\nabla f (\y_k) \neq 0$, there exists $\epsilon' > 0$ such that 
    \begin{align}
        \frac{3}{4} \| \mathcal{G} (f, \y_k, \epsilon'; \xi_k' )  \|^2 
        \le 
        \nabla f (\y_k)^\top \mathcal{G} (f, \y_k, \epsilon'; \xi_k' )
        \le 
        \frac{5}{4} \| \mathcal{G} (f, \y_k, \epsilon'; \xi_k' ) \|^2 , \label{eq:R-grad-f}
    \end{align}
    almost surely. By letting $ \epsilon_k $ to be smaller than this $\epsilon'$, we have 
    \begin{align*}
        \frac{3}{4} \| \g_{k} (\y_k)  \|^2 
        \le 
        \nabla f (\y_k)^\top \g_{k} (\y_k) 
        \le 
        \frac{5}{4} \| \g_{k} (\y_k) \|^2 , \quad a.s.
    \end{align*}
    Note that we can find a $\epsilon_k \le \epsilon' $ at any $k$ without violating $ \sum_{k=1}^\infty \frac{\epsilon_k}{\alpha_k} < \infty $. 
    
    By $L$-smoothness of $f$, we know that 
    \begin{align} 
        f (\x_{k+1}') 
        \le& \;  
        f (\y_k) + \nabla f (\y_k)^\top \( \x_{k+1}' - \y_k \) + \frac{L}{2} \| \x_{k+1}' - \y_k \|^2 \nonumber \\ 
        =& \;  
        f (\y_k) - 2 \eta \nabla f (\y_k)^\top \g_k (\y_k) + 2 L \eta^2 \| \g_k (\y_k) \|^2 \nonumber \\ 
        \le & \; 
        f (\y_k) + \( - \frac{5}{2} \eta + 2 L \eta^2 \) \| \g_k (\y_k) \|^2 , \nonumber 
    \end{align}  
    where the last inequality follows from (\ref{eq:R-grad-f}).  Since $ \eta \in (0, \frac{3}{4 L } ] $, we know 
    \begin{align}
        f (\x_{k+1}') 
        \le & \; 
        f (\y_k) - \eta  \| \g_k (\y_k) \|^2 . \label{eq:x'-inexact} 
    \end{align} 

    Similarly, we have 
    \begin{align}
        f (\x_{k+1}'') 
        \le 
        f (\x_k) + \nabla f (\x_k)^\top \( \x_{k+1}'' - \x_k \) + \frac{L}{2} \| \x_{k+1}'' - \x_k \|^2 
        \le 
        f (\x_k) - \eta \| \g_k (\x_k) \|^2 . \label{eq:x''-inexact} 
    \end{align}
    By (\ref{eq:x''-inexact}), we know 
    \begin{align*}
        \delta_{k+1} 
        \le 
        f (\x_{k+1}'') - f (\x^*) 
        \le 
        f (\x_k) - f (\x^*) = \delta_k . 
    \end{align*} 

    By (\ref{eq:x'-inexact}) we know 
    \begin{align*}
        f (\x_{k+1}) 
        \le 
        f (\x_{k+1}' )
        \le 
        f (\y_k) - \eta \| \g_k (\y_k) \|^2 . 
    \end{align*} 
\end{proof}

    


\if\highlight1 
\color{red}
\fi 
In Lemma \ref{lem:key}, we prove that the optimality gap can be telescoped. This result allows us to readily apply Proposition \ref{prop:basic}. \color{black}
\begin{lemma}
    \label{lem:key}
    Instate all notations in Algorithm \ref{alg:nsa-inexact}. 
    Consider an objective function $f \in \mathscr{F}_{L}^{1,1} (\R^n) $. 
    Let $\alpha_k \in (0,1]$ for all $k \in \mathbb{N}$. 
    Define $\gamma_{k+1} := \frac{\alpha_k^2}{\eta}$ for all $k \ge 1$, and define 
    \begin{align*}
        \Delta_{k} := \frac{\gamma_{k}}{2} \| \z_k - \x^* \|^2 + \delta_k, 
    \end{align*} 
    where $\delta_k := f (\x_k ) - f^*$. 
    For Algorithm \ref{alg:nsa-inexact} with Option II, if $ \x^* \in \mathbb{B}_\mu $, then there exists a constant $C$ such that 
    \begin{align*}
        \E \[ \( \frac{1}{\gamma_{k}} + \frac{\alpha_k}{\gamma_{k+1}} - \frac{1}{\gamma_{k+1}} \) \delta_k \] 
        \le 
        \E \[ \frac{\Delta_{k}}{\gamma_{k}} \] - \E \[ \frac{\Delta_{k+1}}{\gamma_{k+1}} \] + \frac{ 2 C \mu \alpha_k  \epsilon_k }{ \gamma_{k+1} } 
    \end{align*} 
    for all $k  = 1,2,\cdots$. 
\end{lemma}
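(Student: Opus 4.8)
The plan is to follow the template of the proof of Theorem~\ref{thm:main}, inserting two modifications: replacing the exact gradient $\nabla f(\y_k)$ by the inexact estimate $\g_k(\y_k)$, and accounting for the projection step in Option~II. First I would start from $\Delta_{k+1} = \frac{\gamma_{k+1}}{2}\|\z_{k+1} - \x^*\|^2 + \delta_{k+1}$ and use that, since $\x^* \in \B_\mu$ and the Euclidean projection onto the convex ball $\B_\mu$ is nonexpansive with $\Pi_{\B_\mu}(\x^*) = \x^*$, Option~II yields $\|\z_{k+1} - \x^*\| = \|\Pi_{\B_\mu}(\z_{k+1}') - \x^*\| \le \|\z_{k+1}' - \x^*\|$. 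This lets me replace $\z_{k+1}$ by the unprojected iterate $\z_{k+1}' = \z_k - \frac{\eta}{\alpha_k}\g_k(\y_k)$ in the quadratic term at no cost, so the entire deterministic algebra of the original proof transfers verbatim.

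Next I would expand $\frac{\gamma_{k+1}}{2}\|\z_{k+1}' - \x^*\|^2$ exactly as in (\ref{eq:pause-1}); using $\gamma_{k+1} = \alpha_k^2/\eta$, the cross term collapses to $-\alpha_k\langle\g_k(\y_k), \z_k - \x^*\rangle$ and the quadratic term to $\frac{\eta}{2}\|\g_k(\y_k)\|^2$. Invoking the descent estimate $f(\x_{k+1}) \le f(\y_k) - \eta\|\g_k(\y_k)\|^2$ of Lemma~\ref{lem:decrease-inexact}, the two $\|\g_k(\y_k)\|^2$ contributions combine into a nonpositive term $-\frac{\eta}{2}\|\g_k(\y_k)\|^2$, which I discard. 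The remaining linear term I rewrite via $\y_k = (1-\alpha_k)\x_k + \alpha_k\z_k$, so that $\mathbf{w} := (1-\alpha_k)\x_k + \alpha_k\x^* - \y_k$ satisfies $-\alpha_k(\z_k - \x^*) = \mathbf{w}$, and I split $\g_k(\y_k) = \nabla f(\y_k) + \big(\g_k(\y_k) - \nabla f(\y_k)\big)$. The exact-gradient piece $\langle\nabla f(\y_k), \mathbf{w}\rangle$ is handled by convexity exactly as in (\ref{eq:pause-2}), producing $(1-\alpha_k)f(\x_k) + \alpha_k f^* - f(\y_k)$; after cancelling the $f(\y_k)$ terms this reduces to $(1-\alpha_k)\delta_k$, matching the clean inequality from Theorem~\ref{thm:main}.

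The only genuinely new ingredient is the error term $\langle\g_k(\y_k) - \nabla f(\y_k), \mathbf{w}\rangle$. Here I would use the crucial simplification $\mathbf{w} = \alpha_k(\x^* - \z_k)$. Conditioning on the natural filtration $\mathcal{F}_k$ under which $\x_k, \y_k, \z_k$ (hence $\mathbf{w}$) are measurable while the fresh oracle randomness $\xi_k'$ is independent, property \textbf{(H1)} gives $\|\E[\g_k(\y_k)\mid\mathcal{F}_k] - \nabla f(\y_k)\| \le C\epsilon_k$. By Cauchy--Schwarz together with the diameter bound $\|\x^* - \z_k\| \le 2\mu$ (both $\x^*$ and the projected iterate $\z_k$ lie in $\B_\mu$ for $k \ge 1$), the conditional expectation of the error term is at most $2C\mu\alpha_k\epsilon_k$ in absolute value. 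Taking conditional expectations, substituting $\tfrac12\|\z_k - \x^*\|^2 = \Delta_k/\gamma_k - \delta_k/\gamma_k$, dividing by $\gamma_{k+1}$, rearranging the $\delta_k$ coefficient into $\frac{1}{\gamma_k} + \frac{\alpha_k}{\gamma_{k+1}} - \frac{1}{\gamma_{k+1}}$, and finally applying the tower property $\E[\E[\,\cdot\mid\mathcal{F}_k]] = \E[\,\cdot\,]$ delivers the claimed inequality.

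I expect the main obstacle to be the stochastic bookkeeping rather than the algebra: one must fix the filtration so that $\mathbf{w}$ is determined before the fresh oracle randomness is drawn, apply \textbf{(H1)} to the conditional mean rather than to a single realization, and recognize that the projection in Option~II is exactly what guarantees $\z_k \in \B_\mu$ and hence the boundedness $\|\x^* - \z_k\| \le 2\mu$. This boundedness is what makes the error term controllable and, in conjunction with condition~(b) of Theorem~\ref{thm:main-inexact}, ultimately summable; the deterministic portion is a direct transcription of the proof of Theorem~\ref{thm:main}.
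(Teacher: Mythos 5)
Your proposal is correct and follows essentially the same route as the paper's own proof: expanding $\frac{\gamma_{k+1}}{2}\|\z_{k+1}'-\x^*\|^2$ with the collapse of the cross and quadratic terms, absorbing $\frac{\eta}{2}\|\g_k(\y_k)\|^2$ via Lemma \ref{lem:decrease-inexact}, using nonexpansiveness of $\Pi_{\B_\mu}$ with $\x^*\in\B_\mu$, invoking \textbf{(H1)} together with the bound $\|\z_k-\x^*\|\le 2\mu$ for the error term, and finishing with convexity and the division by $\gamma_{k+1}$. The only cosmetic difference is that you split $\g_k(\y_k)$ into $\nabla f(\y_k)$ plus an error before taking (conditional) expectations, whereas the paper applies $\E_k$ first and then bounds the bias; by linearity of expectation these are the same argument.
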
 

\begin{proof}

    For any $k \ge 1$, we have 
    \begin{align*}
        &\; \frac{\gamma_{k+1}}{2} \left\| \z_{k+1}' - \x^* \right\|^2 + f (\x_{k+1}) - f (\x^*) \\ 
        =& \;  
        \frac{\gamma_{k+1}}{2} \left\| \z_{k} - \frac{ \eta }{ \alpha_k} \g_k (\y_k) - \x^* \right\|^2 + f (\x_{k+1}) - f (\x^*) \\
        =& \; 
        \frac{\gamma_{k+1}}{2} \left\| \z_{k} - \x^* \right\|^2 - \< \frac{ \eta \gamma_{k+1} }{ \alpha_k} \g_k (\y_k) , \z_k - \x^* \> + \frac{ \gamma_{k+1} \eta^2 }{ 2 \alpha_k^2} \| \g_k (\y_k) \|^2 + f (\x_{k+1}) - f (\x^*) \\
        =& \; 
        \frac{\gamma_{k+1}}{2} \left\| \z_{k} - \x^* \right\|^2 - \< \alpha_k \g_k (\y_k) , \z_k - \x^* \> + \frac{ \eta }{2} \| \g_k (\y_k) \|^2 + f (\x_{k+1}) - f (\x^*)  \\
        \le& \; 
        \frac{\gamma_{k+1}}{2} \left\| \z_{k} - \x^* \right\|^2 - \< \alpha_k \g_k (\y_k) , \z_k - \x^* \> + f (\y_k) - f (\x^*) , 
    \end{align*}
    where the last inequality follows from Lemma \ref{lem:decrease-inexact}. 

    If $\x^* \in \mathbb{B}_\mu$, then $ \left\| \z_{k+1} - \x^* \right\| \le \left\| \z_{k+1}' - \x^* \right\| $. Thus 
    \begin{align*}
        \Delta_{k+1} 
        \le 
        \frac{\gamma_{k+1}}{2} \left\| \z_{k} - \x^* \right\|^2 - \< \alpha_k \g_k (\y_k) , \z_k - \x^* \> + f (\y_k) - f (\x^*) . 
    \end{align*}

    Let $ \E_k $ be the expectation with respect to $\xi_k$ and $\xi_k'$. 
    Since $ \mathcal{G} $ satisfies property \textbf{(H1)}, taking expectation (with respect to $\xi_k$ and $\xi_k'$) on both sides of the above inequality gives 
    \begin{align*} 
        \E_k \[ \Delta_{k+1} \] 
        \le& \;  
        \frac{\gamma_{k+1}}{2} \left\| \z_{k} - \x^* \right\|^2 - \alpha_k  \< {\nabla} f (\y_k) , \z_k - \x^* \> + C \alpha_k \epsilon_k \| \z_k - \x^* \|  +  f (\y_k)  - f (\x^*) , 
    \end{align*} 
    for some absolute constant $C$. Since $ \z_k \in \B_{\mu} $ and $\x^* \in \B_{\mu}$, we know $ \| \z_k - \x^* \| \le 2 \mu $, and thus 
    \begin{align} 
        \E_k \[ \Delta_{k+1} \] 
        \le& \;  
        \frac{\gamma_{k+1}}{2} \left\| \z_{k} - \x^* \right\|^2 + \< {\nabla} f (\y_k) , (1 - \alpha_k) \x_k + \alpha_k \x^* - \y_k \> + f (\y_k)   - f (\x^*) + 2 C \mu \alpha_k \epsilon_k \nonumber \\ 
        \le& \; 
        \frac{\gamma_{k+1}}{2} \left\| \z_{k} - \x^* \right\|^2 + (1 - \alpha_k) (f (\x_k ) - f (\x^*)) + 2 C \mu \alpha_k \epsilon_k , \nonumber \\
        =& \; 
        \frac{\gamma_{k+1}}{\gamma_{k}} \Delta_k + \( 1 - \alpha_k - \frac{\gamma_{k+1}}{\gamma_{k}}  \) ( f (\x_k) - f (\x^*) ) + 2 C \mu \alpha_k \epsilon_k . \label{eq:1}
    \end{align} 
    where the second last line follows from convexity of $f$.

    Then taking total expectation on both sides of (\ref{eq:1}) and rearranging terms gives 
    \begin{align*}
        \E \[ \( \frac{1}{\gamma_{k}} + \frac{\alpha_k}{\gamma_{k+1}} - \frac{1}{\gamma_{k+1}} \) \delta_k \] 
        \le 
        \E \[ \frac{\Delta_{k}}{\gamma_{k}} \] - \E \[ \frac{\Delta_{k+1}}{\gamma_{k+1}} \] + \frac{ 2 C \mu \alpha_k  \epsilon_k }{ \gamma_{k+1} }. 
    \end{align*} 
\end{proof}
\if\highlight1 
\color{red}
\fi 
With the above results in place, we are now ready to prove Theorem \ref{thm:main-inexact}. \color{black}

\begin{proof}[Proof of Theorem \ref{thm:main-inexact}] 

    Since $ p \ge 3 $, $\alpha_k = \frac{p}{k+p}$, and $ \gamma_{k+1} = \frac{\alpha_k^2}{\eta} $, we have that  
    \begin{align*} 
        \frac{1}{\gamma_{k}} + \frac{\alpha_k}{\gamma_{k+1}} - \frac{1}{ \gamma_{k+1} } 
        \ge \eta \frac{k+p}{  p^2 } . 
    \end{align*} 

     Since $ \sum_{k=0}^\infty \frac{\epsilon_k}{\alpha_k} < \infty $, Lemma \ref{lem:key} implies 
    \begin{align}
        \E \[ \sum_{k=1}^\infty (k+p) \delta_k \] 
        < \infty . \label{eq:key} 
    \end{align} 

    Since $ \E \[ \sum_{k=1}^\infty (k+p) \delta_k \] < \infty $, $ \sum_{k=1}^\infty (k+p) \delta_k < \infty $ almost surely. In addition, $ \delta_k $ is nonnegative and decreasing (by Lemma \ref{lem:decrease-inexact}). Therefore, by Proposition \ref{prop:basic}, we know that $ \lim \limits_{k \to \infty} k^2 ( f (\x_k) - f (\x^*) ) = 0$ almost surely. 

    By (\ref{eq:key}), we know that 
    \begin{align} 
        \Pr \( \sum_{k=1}^\infty (k+p) \delta_k < \infty \) = 1. \label{eq:contra-1}
    \end{align} 

    Now, fix $\kappa \in \mathbb{Q}_+$ and consider the event 
    \begin{align*} 
        \mathcal{E}_\kappa = \left\{ \delta_k \ge \frac{\kappa}{k^2 \log k \log \log k} , \; \text{ for all } k = 2,3,\cdots \right\} . 
    \end{align*} 
    
    Suppose, in order to get a contradiction, 
    that there exists $ \beta >0 $ 
    such that 
    \begin{align} 
        \Pr \( \mathcal{E}_\kappa \) > \beta . \label{eq:contra-2}
    \end{align} 

    When $\mathcal{E}_\kappa$ is true, we know that 
    \begin{align*} 
        \sum_{ k=2 }^\infty (k+p) \delta_k 
        \ge 
        \sum_{ k=2 }^\infty \frac{ \kappa }{ k \log k \log \log k } 
        = \infty . 
    \end{align*} 

    The above argument implies that, (\ref{eq:contra-2}) leads to a contradiction to (\ref{eq:contra-1}) for any $\beta > 0$. Therefore, $ \Pr \(\mathcal{E}_\kappa\) = 0 $ for any $\kappa \in \mathbb{Q}_+$. 
    Thus for any $\kappa \in \mathbb{Q}_+$, it holds that 
    \begin{align*} 
        \Pr \( \liminf_{ k \to \infty } k^2 \log k \log \log k (f (\x_k) - f ^*) 
        \ge \kappa \) = 0 . 
    \end{align*} 
    Thus we have 
    \begin{align*} 
        &\; \Pr \( \liminf_{ k \to \infty } k^2 \log k \log \log k (f (\x_k) - f ^*) 
        > 0 \) \\ 
        =& \; 
        \Pr \( \cup_{\kappa \in \mathbb{Q}_+} \left\{ \liminf_{ k \to \infty } k^2 \log k \log \log k (f (\x_k) - f ^*) 
        \ge \kappa \right\} \) \\ 
        \le& \; 
        \sum_{\kappa \in \mathbb{Q}_+} \Pr \(  \liminf_{ k \to \infty } k^2 \log k \log \log k (f (\x_k) - f ^*) 
        \ge \kappa \) 
        = 0 . 
    \end{align*} 
    This concludes the proof of Theorem \ref{thm:main-inexact}. 
    

\end{proof}  

\section{NSA for Nonsmooth Objective} 
\label{sec:nonsmooth}

Another important problem in optimization is when the objective is convex but possibly nonsmooth. Various methods have been invented to handle the possible nonsmoothness in the objective, including subgradient methods and proximal methods. In this section, we study a proximal version of the NSA algorithm.

In a Hilbert space $\mathcal{H}$ with inner product $ \< \cdot, \cdot \>$ and norm $\| \cdot \|$, with respect to proper lower-semicontinuous convex function  $ \Psi : \mathcal{H} \to \R $ and step size $ \eta $, the Moreau--Yosida proximal operator is defined as a map from $ \mathcal{H} $ to $\mathcal{H}$ such that 
\begin{align} 
    \prox_{\eta, \Psi} (\x) = \arg\min_{\y \in \mathcal{H}} \left\{ \Psi (\y) + \frac{1}{2\eta} \| \y - \x \|^2 \right\} . 
\end{align} 


For a convex proper lower-semicontinuous objective function $f$. This proximal version of the NSA algorithm is summarized below: 
\begin{align} 
    \begin{split}
        \x_{k+1} =& \;  \prox_{s, f } \( (1 - \alpha_k) \x_k + \alpha_k \z_k \) \\ 
        \z_{k+1} =& \; \z_k + \frac{\eta}{s\alpha_k} \( \x_{k+1} - \big( (1 - \alpha_k) \x_k + \alpha_k \z_k \big) \) , 
    \end{split} \label{eq:nsa-prox}
\end{align}
where $ \alpha_k = \frac{p}{k+p} $ for some fixed constant $p$, $s, \eta$ are step sizes, and $ \x_0 = \z_0 $ are initialized to be the same point. The convergence rate of (\ref{eq:nsa-prox}) is in Theorem \ref{thm:rate-prox}.

\begin{theorem}
    \label{thm:rate-prox} 
    Let the objective be a proper lower semicontinuous convex function. Let $ \alpha_k = \frac{p}{k+p} $ with $p \ge 3$. If $s \ge \frac{\eta}{2}$, then there exists a constant $c$ independent of $k$ (depending on the initialization and step sizes), such that the proximal version of the NSA algorithm (\ref{eq:nsa-prox}) satisfies $ f (\x_k) - f^* \le \frac{c}{k^2} $ for all $k \in \mathbb{N}$, where $f^* > -\infty$ is the minimum value of $f$. 
\end{theorem}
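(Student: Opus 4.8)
The plan is to establish the $O(1/k^2)$ rate through a Lyapunov (estimate-sequence) argument that parallels the proof of Theorem \ref{thm:main}, with the gradient step replaced by the proximal step. First I would read off a usable subgradient from the proximal update. Writing $\y_k := (1-\alpha_k)\x_k + \alpha_k\z_k$, the optimality condition for $\x_{k+1} = \prox_{s,f}(\y_k)$ gives a vector $\g_{k+1} := \tfrac{1}{s}(\y_k - \x_{k+1}) \in \partial f(\x_{k+1})$, so that $\x_{k+1} = \y_k - s\,\g_{k+1}$ and, by the second line of (\ref{eq:nsa-prox}), $\z_{k+1} = \z_k - \tfrac{\eta s}{\alpha_k}\g_{k+1}$. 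I would record two elementary consequences of the prox: the sufficient-decrease inequality $f(\x_{k+1}) \le f(\y_k) - \tfrac{1}{2s}\|\x_{k+1}-\y_k\|^2$ (obtained by comparing the prox objective at $\x_{k+1}$ with its value at the competitor $\y_k$), and the subgradient inequality $\< \g_{k+1}, \mathbf{w} - \x_{k+1} \> \le f(\mathbf{w}) - f(\x_{k+1})$, valid for every $\mathbf{w}$.

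Next I would introduce the potential $\Delta_k := \tfrac{\gamma_k}{2}\|\z_k - \x^*\|^2 + (f(\x_k)-f^*)$ with $\gamma_{k+1} := \tfrac{\alpha_k^2}{\eta s}$, the extra factor $s$ relative to the choice $\gamma_{k+1} = \alpha_k^2/\eta$ of Theorem \ref{thm:main} accounting for the proximal step length. Expanding $\tfrac{\gamma_{k+1}}{2}\|\z_{k+1}-\x^*\|^2$ along the $\z$-recursion and using the identity $\alpha_k(\z_k - \x^*) = \y_k - \mathbf{w}_k$ with $\mathbf{w}_k := (1-\alpha_k)\x_k + \alpha_k\x^*$, the cross term becomes $\< \g_{k+1}, \mathbf{w}_k - \y_k\>$. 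Substituting $\y_k = \x_{k+1} + s\,\g_{k+1}$, bounding $\< \g_{k+1}, \mathbf{w}_k - \x_{k+1}\>$ by the subgradient inequality, and invoking convexity $f(\mathbf{w}_k) \le (1-\alpha_k)f(\x_k) + \alpha_k f^*$, I would obtain a one-step estimate of the form
\begin{align*}
\Delta_{k+1} \le \frac{\gamma_{k+1}}{2}\|\z_k - \x^*\|^2 + (1-\alpha_k)\big(f(\x_k)-f^*\big) + r_k ,
\end{align*}
where $r_k$ is a scalar multiple of $\|\x_{k+1}-\y_k\|^2$ whose coefficient depends only on $s$ and $\eta$. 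The role of the step-size hypothesis $s \ge \tfrac{\eta}{2}$ is precisely to render this coefficient nonpositive, so that $r_k \le 0$ may be discarded.

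With the residual removed, dividing by $\gamma_{k+1}$ and subtracting $\tfrac{\Delta_k}{\gamma_k} = \tfrac12\|\z_k-\x^*\|^2 + \tfrac{1}{\gamma_k}(f(\x_k)-f^*)$ yields
\begin{align*}
\frac{\Delta_{k+1}}{\gamma_{k+1}} - \frac{\Delta_k}{\gamma_k} \le \Big(\frac{1-\alpha_k}{\gamma_{k+1}} - \frac{1}{\gamma_k}\Big)\big(f(\x_k)-f^*\big) .
\end{align*}
Substituting $\alpha_k = \tfrac{p}{k+p}$ and $\gamma_{k+1} = \tfrac{\alpha_k^2}{\eta s}$, a direct computation gives $\tfrac{1-\alpha_k}{\gamma_{k+1}} - \tfrac{1}{\gamma_k} = -\tfrac{\eta s}{p^2}\big(k(p-2) + (p-1)^2\big)$, which is nonpositive exactly because $p \ge 3$. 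Hence $\{\Delta_k/\gamma_k\}$ is nonincreasing, so $\Delta_k/\gamma_k \le \Delta_1/\gamma_1$ for all $k$. Since $f(\x_k)-f^* \le \Delta_k = \gamma_k\cdot(\Delta_k/\gamma_k) \le \gamma_k\,\Delta_1/\gamma_1$ and $\gamma_k = \tfrac{p^2}{\eta s\,(k-1+p)^2}$, the claimed bound $f(\x_k)-f^* \le c/k^2$ follows with $c := \tfrac{p^2}{\eta s}\cdot\tfrac{\Delta_1}{\gamma_1}$, a constant determined by the initialization and the step sizes.

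I expect the main obstacle to be the handling of the cross term in the proximal setting. Unlike Theorem \ref{thm:main}, where the gradient is evaluated at the extrapolation point $\y_k$ and convexity can be applied there directly, here the only subgradient available, $\g_{k+1}$, lives at $\x_{k+1}$. Routing the cross term through the relation $\y_k = \x_{k+1} + s\,\g_{k+1}$ and convexity at $\x_{k+1}$ is what generates the residual $r_k$, and the delicate bookkeeping step is to track its coefficient and confirm that $s \ge \tfrac{\eta}{2}$ makes it nonpositive. Once this is secured, the telescoping and the $p \ge 3$ sign computation are routine and mirror the corresponding steps in the proof of Theorem \ref{thm:main}.
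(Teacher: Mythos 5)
Your proposal follows the same skeleton as the paper's proof: extract $\g_{k+1}\in\partial f(\x_{k+1})$ from the prox optimality condition, form the potential $\Delta_k=\frac{\gamma_k}{2}\|\z_k-\x^*\|^2+f(\x_k)-f^*$, rewrite the cross term via $\alpha_k(\z_k-\x^*)=\y_k-\mathbf{w}_k$, apply the subgradient inequality and convexity, and telescope using the sign computation for $p\ge 3$ (your computation $\frac{1-\alpha_k}{\gamma_{k+1}}-\frac{1}{\gamma_k}=-\frac{\eta s}{p^2}\bigl(k(p-2)+(p-1)^2\bigr)$ and the final constant are both correct). However, the step you yourself flagged as the delicate one fails. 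Under your normalization --- the literal $\z$-update $\z_{k+1}=\z_k-\frac{\eta s}{\alpha_k}\g_{k+1}$ paired with $\gamma_{k+1}=\frac{\alpha_k^2}{\eta s}$ --- expanding $\frac{\gamma_{k+1}}{2}\|\z_{k+1}-\x^*\|^2$ yields the quadratic term $\frac{\eta s}{2}\|\g_{k+1}\|^2$, while routing the cross term through $\y_k=\x_{k+1}+s\,\g_{k+1}$ contributes $-s\|\g_{k+1}\|^2$. Hence $r_k=\bigl(\frac{\eta s}{2}-s\bigr)\|\g_{k+1}\|^2=\frac{\eta-2}{2s}\|\x_{k+1}-\y_k\|^2$. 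This coefficient is nonpositive if and only if $\eta\le 2$; its sign does not depend on $s$ at all, so the hypothesis $s\ge\frac{\eta}{2}$ cannot ``render it nonpositive'' as you claim. Concretely, with $\eta=4$, $s=2$ the theorem's hypothesis holds, yet $r_k=2\|\g_{k+1}\|^2>0$ and your one-step inequality does not close.

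The source of the mismatch is how the factor $s$ is booked. The paper's own proof (consistent with its smooth rewrite (\ref{eq:nsa-prox-2}), though admittedly not with a literal reading of (\ref{eq:nsa-prox})) takes the $\z$-update to be $\z_{k+1}=\z_k-\frac{\eta}{\alpha_k}\g_{k+1}$, i.e., the displacement $\x_{k+1}-\y_k=-s\,\g_{k+1}$ is rescaled by $1/s$ before multiplying by $\eta/\alpha_k$, and keeps $\gamma_{k+1}=\frac{\alpha_k^2}{\eta}$. Then the quadratic term is $\frac{\eta}{2}\|\g_{k+1}\|^2$ while the cross-term substitution still produces $-s\|\g_{k+1}\|^2$, so the residual is $\bigl(\frac{\eta}{2}-s\bigr)\|\g_{k+1}\|^2$, and the hypothesis $s\ge\frac{\eta}{2}$ is exactly what discards it. By absorbing the extra $s$ into $\gamma$, you have effectively proved the theorem under the substitute condition $\eta\le 2$ (for the literal dynamics), not under the stated condition $s\ge\frac{\eta}{2}$. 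To prove the statement as written you should adopt the paper's normalization of the $\z$-update and of $\gamma_k$; with that single change, the rest of your argument goes through verbatim.
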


\begin{proof}
    Let $\partial f (\x)$ be the set of subgradients of $f$ at $\x$. 
    By the optimality condition for subgradients (e.g., \cite{RockWets98}), there exists $ \g_{k+1} \in \partial f (\x_{k+1}) $, such that 
    \begin{align*}
        \x_{k+1} =  (1 - \alpha_k) \x_k + \alpha_k \z_k - s \g_{k+1} . 
    \end{align*}

    Introduce $\gamma_{k+1} = \frac{\alpha_k^2}{\eta} $ for $k = 1,2,\cdots$. Let $\x^*$ be a point where $f^*$ is attained. Define 
    \begin{align*}
        \Delta_k := \frac{\gamma_{k}}{2} \| \z_k - \x^* \|^2 + f (\x_k ) - f (\x^*) . 
    \end{align*}

    Then we have 
    \begin{align*}
        \Delta_{k+1} 
        =& \;  
        \frac{\gamma_{k+1}}{2} \left\| \z_{k+1} - \x^* \right\|^2 + f (\x_{k+1} ) - f (\x^*) \\ 
        =& \; 
        \frac{\gamma_{k+1}}{2} \left\| \z_k - \x^* \right\|^2 + \< \g_{k+1} , \alpha_k \x^* - \alpha_k \z_k \> + \frac{\eta}{2} \| \g_{k+1} \|^2 + f (\x_{k+1} ) - f (\x^*) \\ 
        =& \; 
        \frac{\gamma_{k+1}}{2} \left\| \z_k - \x^* \right\|^2 + \< \g_{k+1} , \alpha_k \x^* + (1 - \alpha_k ) \x_k - s \g_{k+1}-\x_{k+1} \> \\
        &+ \frac{\eta}{2} \| \g_{k+1} \|^2 + f (\x_{k+1} ) - f (\x^*) \\ 
        \le&\; 
        \frac{\gamma_{k+1}}{2} \left\| \z_k - \x^* \right\|^2 + \< \g_{k+1} , \alpha_k \x^* + (1 - \alpha_k ) \x_k -\x_{k+1} \> + f (\x_{k+1} ) - f (\x^*) \tag{since $s \ge \frac{\eta}{2}$} \\
        \le&\; 
        \frac{\gamma_{k+1}}{2} \left\| \z_k - \x^* \right\|^2 + f (\alpha_k \x^* + (1 - \alpha_k) \x_k) - f (\x^*) \\
        \le&\; 
        \frac{\gamma_{k+1}}{2} \left\| \z_k - \x^* \right\|^2 + (1 - \alpha_k) (f (\x_k) - f (\x^*)), 
    \end{align*} 
    where the last two lines use convexity of the function $f$. Rearranging terms gives that $ \frac{\Delta_{k+1}}{\gamma_{k+1}} \le \frac{\Delta_{k}}{\gamma_{k}}  $. Therefore, 
    \begin{align*}
        \frac{\Delta_{k+1}}{\gamma_{k+1}} \le \frac{\Delta_{k}}{\gamma_{k}} \cdots \le \frac{\Delta_{1}}{\gamma_{1}}, 
    \end{align*}
    which concludes the proof since $ \gamma_{k} = \Theta \( 1/k^2 \) $. 
\end{proof}


\subsection{A Continuous-time Perspective}
\label{sec:cont}


In this section, we provide a continuous-time analysis for a proximal version of the NSA algorithm (\ref{eq:nsa-prox}). 
In order for us to find a proper continuous-time trajectory of the discrete-time algorithm, Taylor approximation in Hilbert spaces will be used. Therefore, we want the objective function to be smooth. Otherwise, the error terms can get out-of-control and the correspondence between the discrete-time algorithm and continuous-time trajectory may break. After an ODE is derived for the smooth case via Taylor approximation, we could analogously write out an differential inclusion for the nonsmooth case, with derivatives defined in a subdifferential sense (e.g., \cite{doi:10.1137/15M1046095}).  

When the objective function $f$ is differentiable, we rewrite (\ref{eq:nsa-prox}) as 
\begin{align} 
    \begin{split}
        \x_{k+1} &= (1 - \alpha_k) \x_k + \alpha_k \z_k - s \nabla f (\x_{k+1}) \\
        \z_{k+1} &= \z_k - \frac{\eta}{\alpha_k} \nabla f (\x_{k+1}) , 
    \end{split} \label{eq:nsa-prox-2} 
\end{align} 
where $\alpha_k = \frac{p}{k+p}$, and $s,\eta$ are step sizes. 

Let $\{ \x_k \}_{k\in\mathbb{N}}$ and $\{ \z_k \}_{k\in\mathbb{N}}$ be associated with two curves $X(t)$ and $Z(t)$ ($t\ge 0$). 
For the first equation in (\ref{eq:nsa-prox-2}), we introduce a correspondence between discrete-time index $k$ and continuous time index $t$: $t = k \sqrt{\eta}$. With this we have $X(t) \approx \x_k$, $ X(t+\sqrt{\eta}) \approx \x_{k+1} $, and $\alpha_k = \frac{p}{k+p} = \frac{\sqrt{\eta} p}{t + \sqrt{\eta} p} = \frac{ \sqrt{\eta} p}{ t} + o(\sqrt{\eta}) $. \if\highlight1 \color{red} \fi Henceforth, $X$ and $X(t)$ (respectively, $\dot{X}$ and $\dot{X}(t)$) are used interchangeably where the meaning is clear. \color{black} It holds that 
\begin{align*} 
    \x_{k+1} - \x_k = \sqrt{\eta} \dot{X}(t) + o(\sqrt{\eta}),  
\end{align*} 
and when $s = o (1)$, we Taylor approximate the trajectories in the Hilbert space $\mathcal{H}$ \cite{cartan2017differential} 
\begin{align*}
    &\; \alpha_k (\x_k - \z_k) + s \nabla f (\x_{k+1}) \\ 
    =& \;  
    \frac{ \sqrt{\eta} p }{ t + \sqrt{\eta} p} \( X(t) - Z (t) \) + s \nabla f (X(t) + \sqrt{ \eta } \dot{X} + o(\sqrt{\eta})) \\ 
    =& \;  
    \frac{ \sqrt{\eta} p }{ t } \( X(t) - Z (t) \) + \frac{s}{\sqrt{\eta}} \sqrt{\eta} \nabla f (X(t) ) + s \sqrt{\eta} \nabla^2 f ( X ) \dot{X} + o( \sqrt{\eta} ) \\ 
    =& \; 
    \frac{ \sqrt{\eta} p }{ t } \( X(t) - Z (t) \) + \frac{s}{\sqrt{\eta}} \sqrt{\eta} \nabla f (X(t) ) + o( \sqrt{\eta} ) , 
\end{align*}
\if\highlight1 
\color{red}
\fi 
where the last line uses that $ O ( s \sqrt{\eta} ) = o (\sqrt{\eta}) $. \color{black}

Collecting terms multiplied to $\sqrt{\eta}$, and we obtain that when $\eta$ and $s$ are small, the first equation in (\ref{eq:nsa-prox-2}) corresponds to the following ODE: \if\highlight1 
\color{red}
\fi 
\begin{align} 
    \dot{X} + \frac{p}{t} (X - Z) + \frac{s}{\sqrt{\eta}} \nabla f (X) = 0. \label{eq:ode-1} 
\end{align} 
\color{black}
For the second equation in (\ref{eq:nsa-prox-2}), we again introduce a correspondence between discrete-time index $k$ and continuous time index $t$: $t = k \sqrt{\eta}$. With this we have $Z (t+\sqrt{\eta}) \approx \z_{k+1}$ and $\frac{1}{\alpha_k} = \frac{t + \sqrt{\eta} p}{\sqrt{\eta} p} $. Thus we have 
\begin{align*} 
    \z_{k+1} - \z_k 
    = 
    Z (t + \sqrt{\eta}) - Z(t) 
    = 
    \sqrt{\eta } \dot{Z} + o (\eta) , 
\end{align*} 
and 
\begin{align*}
    \frac{\eta}{\alpha_k } \nabla f (\x_{k+1}) 
    = 
    \sqrt{\eta} \frac{t}{p} \nabla f (X (t)) + o (\sqrt{\eta}). 
\end{align*}

Collecting terms, and we obtain that when $\eta$ is small, the second equation in (\ref{eq:nsa-prox-2}) corresponds to the following ODE: 
\begin{align*} 
    \dot{Z} + \frac{t}{p} \nabla f (X) = 0. 
\end{align*}

Thus the rule (\ref{eq:nsa-prox-2}) corresponds to the following system of ODEs: 
\begin{align} 
    \begin{split}
        &t \dot{X} + p (X - Z) + t \frac{s}{\sqrt{\eta}} \nabla f (X) = 0, \\  
    &\dot{Z} + \frac{t}{p} \nabla f (X) = 0 . 
    \end{split} \label{eq:sys-ode}
\end{align} 


In this system of ODEs, the trajectory of $X$ converges to the optimum at rate $ O (1/t^2) $ for convex functions. 

\begin{proposition} 
    \label{prop:cont}
    Let $X$, $Z$ be governed by the system of ODEs (\ref{eq:sys-ode}) with a differentiable convex function $f \in \mathscr{F}$. If $p \ge 2$, then there exists a time-independent constant $c$ (depending only on $X(0)$,$f (X(0))$ and $s,\eta$) such that $ f (X (t)) - f^* \le \frac{c}{t^2} $. 
\end{proposition}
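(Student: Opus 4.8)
The plan is to run a Lyapunov (energy) argument tailored to the first-order system (\ref{eq:sys-ode}), mirroring the telescoping structure that produced $\Delta_k/\gamma_k$ in the proof of Theorem \ref{thm:main}. Concretely, I would fix a minimizer $\x^*$ of $f$ (which exists since $f \in \mathscr{F}$), abbreviate $\delta(t) := f(X(t)) - f^*$, and propose the candidate energy
\begin{align*}
    \mathcal{E}(t) := t^2 \delta(t) + \frac{p^2}{2} \| Z(t) - \x^* \|^2 .
\end{align*}
This is the continuous-time analogue of the telescoped quantity $\Delta_k/\gamma_k$ from Theorem \ref{thm:main} (up to the constant factor $p^2$): under the correspondence $t = k\sqrt{\eta}$ one has $\gamma_k = \Theta(1/t^2)$, so $\delta_k/\gamma_k$ carries a weight growing like $t^2$ on the optimality gap while the displacement term $\|\z_k - \x^*\|^2$ acquires a constant weight.

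Next I would differentiate $\mathcal{E}$ and substitute both equations of (\ref{eq:sys-ode}), using $\dot{X} = -\frac{p}{t}(X - Z) - \frac{s}{\sqrt{\eta}}\nabla f(X)$ from the first ODE and $\dot{Z} = -\frac{t}{p}\nabla f(X)$ from the second. A direct computation gives
\begin{align*}
    \dot{\mathcal{E}}(t) = 2t\delta - pt\,\nabla f(X)^\top (X - Z) - \frac{s t^2}{\sqrt{\eta}} \|\nabla f(X)\|^2 - pt\,\nabla f(X)^\top (Z - \x^*).
\end{align*}
The crucial structural feature — the reason $(X,Z)$ are coupled exactly this way — is that splitting $X - Z = (X - \x^*) - (Z - \x^*)$ makes the two terms carrying $\nabla f(X)^\top(Z - \x^*)$ cancel exactly. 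What remains is $\dot{\mathcal{E}} = 2t\delta - pt\,\nabla f(X)^\top(X - \x^*) - \frac{st^2}{\sqrt{\eta}}\|\nabla f(X)\|^2$, and convexity of $f$ supplies $\nabla f(X)^\top(X - \x^*) \ge \delta$, so that
\begin{align*}
    \dot{\mathcal{E}}(t) \le (2 - p)\, t\, \delta(t) - \frac{s t^2}{\sqrt{\eta}} \|\nabla f(X)\|^2 \le 0
\end{align*}
whenever $p \ge 2$. Thus $\mathcal{E}$ is nonincreasing; note the threshold $p \ge 2$ enters precisely through the sign of the coefficient $(2-p)$.

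Finally I would integrate this inequality. Since $\mathcal{E}$ is nonincreasing, $\mathcal{E}(t) \le \lim_{\tau \to 0^+}\mathcal{E}(\tau)$; evaluating the first ODE at $t = 0$ forces $X(0) = Z(0)$ (consistent with the initialization $\x_0 = \z_0$), and $t^2\delta(t) \to 0$, so this limit equals $\frac{p^2}{2}\|X(0) - \x^*\|^2 =: c$, a time-independent constant. Dropping the nonnegative term $\frac{p^2}{2}\|Z - \x^*\|^2$ then yields $t^2\delta(t) \le \mathcal{E}(t) \le c$, i.e. $f(X(t)) - f^* \le c/t^2$, as claimed. The step I expect to require the most care is not the algebra but the well-posedness near the singularity at $t=0$: the coefficient $p/t$ in the first ODE is singular, so I would need to argue that a solution satisfying the compatibility condition $X(0)=Z(0)$ exists and that $\mathcal{E}$ is differentiable on $(0,\infty)$ with the correct limit at $0$. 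This is the standard obstacle for accelerated-gradient ODEs and can be handled by the usual approximation/continuity arguments, after which the energy computation above is routine.
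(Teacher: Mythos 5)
Your proposal is correct and takes essentially the same route as the paper: the identical Lyapunov functional $\frac{p^2}{2}\|Z-\x^*\|^2 + t^2\bigl(f(X)-f^*\bigr)$ (note the paper's displayed definition of $\mathcal{E}$ omits the $t^2$ weight, but its derivative computation and conclusion clearly use it, so this is a typo there), the same exact cancellation of the $\langle \nabla f(X), Z-\x^*\rangle$ terms coming from the two ODEs, and the same convexity bound giving $\dot{\mathcal{E}} \le (2-p)\,t\,\delta - \frac{s t^2}{\sqrt{\eta}}\|\nabla f(X)\|^2 \le 0$ for $p \ge 2$. The only difference is that you additionally attend to the singular behavior at $t=0^+$ and the compatibility condition $X(0)=Z(0)$, a point the paper passes over by writing $\mathcal{E}(t)\le\mathcal{E}(0)$ directly.
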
 


    
\begin{proof}[Proof of Proposition \ref{prop:cont}]
    
    Recall $\alpha_k = \frac{p}{k+p}$ in (\ref{eq:sys-ode}). Define the energy functional 
    \begin{align*}
        \mathcal{E} (t) = \frac{p^2}{2} \| Z - \x^* \|^2 + f (X) - f (\x^*) . 
    \end{align*} 
    
    For this energy functional, we have 
    \begin{align*}
        \frac{d \mathcal{E} (t) }{dt} 
        =& \;  
        p^2 \< Z - \x^*, \dot{Z} \> + 2 t \( f (X) - f (\x^*) \) + t^2 \< \nabla f (X) , \dot{X} \> \\ 
        \le& \; 
        p^2 \< Z - \x^*, \dot{Z} \> + 2 t \< \nabla f (X) , X - \x^* \> + t^2 \< \nabla f (X) , \dot{X} \> \tag{by convexity} \\ 
        =& \; 
        p \< Z - \x^*, - t \nabla f (X) \> + 2 t \< \nabla f (X) , X - \x^* \> \\ 
        &+ t \< \nabla f (X) , p (Z - X) - \frac{t s}{\sqrt{\eta}}\nabla f (X) \>  \\ 
        =& \; 
        - \frac{t^2 s }{ \sqrt{\eta} } \| \nabla f (X) \|^2 + \( p - 2 \) t \< \nabla f (X), \x^* - X \> 
        \\
        \le& \; 0. 
    \end{align*} 
    The above implies 
    \begin{equation}
       f(X(t))-f(\x^*)\leq \frac{\mathcal{E}(t)}{t^2}\leq\frac{\mathcal{E}(0)}{t^2}, \nonumber
    \end{equation}
    which concludes the proof. 
\end{proof}

\subsubsection{A Single Second-order ODE for (\ref{eq:nsa-prox-2})} 
\label{sec:ode}

We can combine the above two ODEs (\ref{eq:sys-ode}) and get a second-order ODE governing the trajectory of $X$. \if\highlight1 
\color{red}
\fi Specifically, we take derivatives on both sides of the first equation in (\ref{eq:sys-ode}), and substitute the $\dot{Z}$ term by the second equation in (\ref{eq:sys-ode}). This derivation gives the following second-order ODE: \color{black} 
\begin{align} 
    \ddot{X} +  \( \frac{s}{\sqrt{\eta}} \nabla^2 f (X) + \frac{p+1}{t} I \) \dot{X} + \( 1 + \frac{s}{t \sqrt{\eta }} \) \nabla f (X) = 0 , \label{eq:high-resolution}
\end{align} 
where $I$ denotes the identity element in $ \mathcal{H} \otimes \mathcal{H}^* $. Recall $\mathcal{H}$ is the space over which $f$ is defined. 
 
Note that the single second-order ODE recovers the high-resolution ODE derived in \cite{2018Understanding}. Continuous-time analysis of gradient and/or proximal methods themselves form a rich body of the literature. \cite{doi:10.1080/01630560008816971,alvarez2001inertial,ALVAREZ2002747,attouch2018fast} were among the first that formally connects physical systems to gradient algorithms. Principles of classical mechanics \cite{arnold1989mathematical} have also been applied to this topic \cite{ATTOUCH20165734,doi:10.1073/pnas.1614734113,JMLR:v22:20-195}. Also, \cite{NEURIPS2018_44968aec} shows that refined discretization in the theorem of numeric ODEs scheme achieves acceleration for smooth enough functions. 
Till now, the finest correspondence between continuous-time ODE and discrete-time Nesterov acceleration is find in \cite{2018Understanding}. 
In this paper, we show that NSA easily recovers this high-resolution ODE. 


\if\highlight1 
\color{red}
\fi 


\color{black}

\if\highlight1 
\color{red}
\fi 
\section{NSA for Composite Objectives}
\label{sec:composite}
    
In many scenarios, we encounter composite objective problems.
Such problems are of the following form
\begin{align}
    \min_{\x} F (\x) , \quad F (\x) = f (\x) + h (\x), \label{eq:obj-composite}
\end{align}
where $f$ is smooth convex and $h$ is nonsmooth and convex. This class of problems includes those where $ h$ serves as a constraint to a convex set or where $ h $ represents the $ L_1 $-norm. In such cases, we can slightly alter the NSA algorithm so that it handles composite objectives. This version of NSA is summarized below in Algorithm \ref{alg:nsa-comp}. 

\begin{algorithm}[H] 
    \caption{Nesterov--Spokoiny Acceleration (NSA) for Composite Objectives} 
    \label{alg:nsa-comp} 
    \begin{algorithmic}[1]  
        \STATE \textbf{Initialize: }Pick $ \x_0 = \z_0 \in \R^n $ (such that $ \partial F (\x_0) \not\ni 0 $.) 
        \FOR{$k = 0,1,2,\cdots,$}
            \STATE Compute $\y_{k} = (1 - \alpha_k) \x_k + \alpha_k \z_k $, with $\alpha_k = \frac{p}{k+p}$ for some constant $p$. 
            \STATE $\x_{k+1}' = \prox_{\eta, h} \( \y_k - \eta \nabla f (\y_k) \) $. /* $\eta$ is the step size. */
            \STATE $\x_{k+1}'' = \prox_{\eta, h} \( \x_k - \eta \nabla f (\x_k) \) $. 
            \STATE 
                $
                    \x_{k+1} =
                    \begin{cases}
                        \x_{k+1}', & \text{ if } F ( \x_{k+1}' ) \le F ( \x_{k+1}'' ) \\ 
                        \x_{k+1}'', & \text{ otherwise. } 
                    \end{cases} 
                $ 
            \STATE $\z_{k+1} = \z_{k} - \frac{\eta}{\alpha_k} \nabla f (\y_k) - \frac{\eta}{\alpha_k} \g_{k+1}'$, where $ \eta \g_{k+1}' = \y_k - \eta \nabla f (\y_k) - \x_{k+1}' $
        \ENDFOR 
    \end{algorithmic} 
\end{algorithm}  

By the properties of the proximal operator, we know there exists vectors $\g_{k+1}' \in \partial h (\x_{k+1}') $ and $\g_{k+1}'' \in \partial h (\x_{k+1}'') $ such that 
\begin{align*} 
    \x_{k+1}' = \y_k - \eta \nabla f (\y_k) - \eta \g_{k+1}' 
    \quad \text{and} \quad
    \x_{k+1}'' = \x_k - \eta \nabla f (\x_k) - \eta \g_{k+1}'' . 
\end{align*} 

Algorithm \ref{alg:nsa-comp} also satisfies the descent property, as stated below in Lemma \ref{lem:compoosite-descent}. 

\begin{lemma}
    \label{lem:compoosite-descent}
    Consider the composite program in (\ref{eq:obj-composite}), where $f$ is convex, $L$-smooth and $h$ is convex and nonsmooth, and $F$ admits a minimum at $\x^*$ such that $F (\x^*) > -\infty$. 
    Let $\delta_k = F (\x_k) - F^* $. 
    Then Algorithm \ref{alg:nsa-comp} with $\eta = \frac{1}{L}$ satisfies that 
    \begin{align} 
        \delta_{k+1} \le \delta_k \label{eq:composite-descent}
    \end{align}
    and 
    \begin{align}
        F (\x_{k+1}') \le f (\y_k ) - \eta \< \nabla f (\y_k) , \nabla f (\y_k) + \g_{k+1}' \> + \frac{\eta}{2} \| \nabla f (\y_k) + \g_{k+1}' \|^2 + h (\x_{k+1}' ) . \label{eq:composiite-descent-2}
    \end{align} 
\end{lemma}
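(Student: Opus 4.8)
The plan is to establish the two claims in reverse order of the statement: first verify the one-step bound (\ref{eq:composiite-descent-2}) on the candidate $\x_{k+1}'$ directly from $L$-smoothness and the proximal characterization, and then bootstrap the monotonicity (\ref{eq:composite-descent}) from an analogous bound on the second candidate $\x_{k+1}''$ together with the selection rule defining $\x_{k+1}$.

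For (\ref{eq:composiite-descent-2}), the key observation is that the proximal step supplies the displacement in closed form. By the characterization recorded immediately after Algorithm \ref{alg:nsa-comp}, there is $\g_{k+1}' \in \partial h (\x_{k+1}')$ with $ \x_{k+1}' - \y_k = -\eta ( \nabla f (\y_k) + \g_{k+1}' ) $. Substituting this into the $L$-smoothness inequality for $f$ centered at $\y_k$,
\begin{align*}
    f (\x_{k+1}') \le f (\y_k) + \langle \nabla f (\y_k), \x_{k+1}' - \y_k \rangle + \tfrac{L}{2} \| \x_{k+1}' - \y_k \|^2 ,
\end{align*}
and using that $\eta = \tfrac{1}{L}$ forces $\tfrac{L}{2}\eta^2 = \tfrac{\eta}{2}$, the right-hand side becomes exactly $ f (\y_k) - \eta \langle \nabla f (\y_k), \nabla f (\y_k) + \g_{k+1}' \rangle + \tfrac{\eta}{2} \| \nabla f (\y_k) + \g_{k+1}' \|^2 $. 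Adding $h (\x_{k+1}')$ to both sides yields (\ref{eq:composiite-descent-2}).

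For the monotonicity (\ref{eq:composite-descent}), since $\x_{k+1}$ is selected so that $ F (\x_{k+1}) \le F (\x_{k+1}'') $, it suffices to prove $ F (\x_{k+1}'') \le F (\x_k) $. Repeating the smoothness argument at the base point $\x_k$, with displacement $ \x_{k+1}'' - \x_k = -\eta ( \nabla f (\x_k) + \g_{k+1}'' ) $ and $\g_{k+1}'' \in \partial h (\x_{k+1}'')$, and then expanding the resulting quadratic, I expect to obtain
\begin{align*}
    f (\x_{k+1}'') \le f (\x_k) - \tfrac{\eta}{2} \| \nabla f (\x_k) \|^2 + \tfrac{\eta}{2} \| \g_{k+1}'' \|^2 .
\end{align*}
I would then invoke convexity of $h$: since $\g_{k+1}'' \in \partial h (\x_{k+1}'')$, the subgradient inequality gives $ h (\x_{k+1}'') - h (\x_k) \le \langle \g_{k+1}'', \x_{k+1}'' - \x_k \rangle = -\eta \langle \g_{k+1}'', \nabla f (\x_k) + \g_{k+1}'' \rangle $. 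Adding the two estimates, the cross terms cancel and the excess collapses into $ -\tfrac{\eta}{2} \| \nabla f (\x_k) + \g_{k+1}'' \|^2 \le 0 $, so $ F (\x_{k+1}'') \le F (\x_k) $ and hence $ \delta_{k+1} \le \delta_k $.

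The computations are elementary, so the genuine obstacle is purely algebraic bookkeeping: one must check that the quadratic produced by $L$-smoothness and the linear term produced by the $h$-subgradient inequality recombine into a single perfect square, which is precisely what makes $\eta = \tfrac{1}{L}$ the right step size. The selection rule itself contributes nothing beyond the trivial inequality $ F (\x_{k+1}) \le F (\x_{k+1}'') $, and it is worth noting that the monotonicity argument uses only the $\x_{k+1}''$ branch; the $\x_{k+1}'$ branch is irrelevant here and is needed only later for the acceleration analysis.
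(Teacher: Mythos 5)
Your proposal is correct, and its second half coincides with the paper's own argument: inequality (\ref{eq:composiite-descent-2}) is obtained in both cases by substituting the proximal characterization $\x_{k+1}' - \y_k = -\eta \left( \nabla f (\y_k) + \g_{k+1}' \right)$ into the $L$-smoothness inequality at $\y_k$ and using that $\eta = \tfrac{1}{L}$ turns $\tfrac{L}{2}\eta^2$ into $\tfrac{\eta}{2}$. The difference is in the monotonicity claim (\ref{eq:composite-descent}): the paper observes that $\x_{k+1}''$ is a proximal-gradient step from $\x_k$ and invokes the standard descent property of such steps by citation (Chapter 10 of \cite{beck2017first}), combined with the selection rule $F(\x_{k+1}) \le F(\x_{k+1}'')$ exactly as you use it. You instead reprove that descent property from first principles: $L$-smoothness at $\x_k$, the subgradient inequality for $h$ at $\x_{k+1}''$ with the subgradient $\g_{k+1}''$ supplied by the proximal characterization, and the algebraic recombination of the resulting terms into the perfect square $-\tfrac{\eta}{2}\| \nabla f(\x_k) + \g_{k+1}'' \|^2$. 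Your cancellation is correct, and this inlined derivation buys two things the citation does not: the lemma becomes self-contained, and you obtain the quantitative decrease $F(\x_{k+1}'') \le F(\x_k) - \tfrac{\eta}{2}\| \nabla f(\x_k) + \g_{k+1}'' \|^2$, which is strictly stronger than the bare inequality $\delta_{k+1} \le \delta_k$ (though the paper never needs the stronger form). The price is only length; the mathematical content is the same as the textbook result the paper points to, so the two proofs are equivalent in substance.
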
 

\begin{proof}   
    By the algorithm update rule, we know $\x_{k+1}''$ is a prox-gradient step starting from $\x_k$. By the descent property (e.g., Chapter 10 in \cite{beck2017first}), we know 
    \begin{align*}
        F (\x_{k+1}) \le F (\x_{k+1}'') \le F ( \x_k  ) , 
    \end{align*}
    which proves (\ref{eq:composite-descent}). To prove (\ref{eq:composiite-descent-2}), we notice that 
    \begin{align*} 
        F (\x_{k+1}') =&\; f (\x_{k+1}') + h (\x_{k+1}') 
        =
        f (\y_{k} - \eta \nabla f (\y_k) - \eta \g_{k+1}' ) + h (\x_{k+1}') \\ 
        \le& \;  
        f (\y_k) - \eta \< \nabla f (\y_k) , \nabla f (\y_k) + \g_{k+1}' \> + \frac{ \eta }{2} \| \nabla f (\y_k) + \g_{k+1}' \|^2 + h (\x_{k+1}) , 
    \end{align*} 
    where the last inequality uses $L$-smoothness of $f$. Setting $\eta = \frac{1}{L}$ finishes the proof. 
\end{proof} 




    



We are now ready to present and prove the convergence guarantee for Algorithm \ref{alg:nsa-comp}. 

\begin{theorem}
    \label{thm:prox-improve}
    Consider the composite program in (\ref{eq:obj-composite}), where $f$ is convex, $L$-smooth and $h$ is convex and nonsmooth, and $F$ admits a minimum at $\x^*$ such that $F^* = F (\x^*) > -\infty$. 
    Let $\delta_k = F (\x_k) - F^* $. 
    Then Algorithm \ref{alg:nsa-comp} with $\eta = \frac{1}{L}$ satisfies that 
    \begin{align*} 
        \lim_{k\to \infty } k^2 \delta_{k} = 0. 
    \end{align*}
\end{theorem}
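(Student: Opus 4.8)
The plan is to transcribe the proof of Theorem~\ref{thm:main}, with the exact gradient $\nabla f(\y_k)$ replaced by the composite gradient mapping $G_k := \nabla f(\y_k) + \g_{k+1}'$. With this notation the inner iterate reads $\x_{k+1}' = \y_k - \eta G_k$ and the momentum update reads $\z_{k+1} = \z_k - \frac{\eta}{\alpha_k} G_k$, exactly paralleling Algorithm~\ref{alg:nsa}. I would keep the same Lyapunov functional $\Delta_k = \frac{\gamma_k}{2}\| \z_k - \x^* \|^2 + \delta_k$ with $\gamma_{k+1} = \frac{\alpha_k^2}{\eta}$, now with $\delta_k = F(\x_k) - F^*$. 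Expanding $\frac{\gamma_{k+1}}{2}\|\z_{k+1} - \x^*\|^2$ and using $\gamma_{k+1} = \frac{\alpha_k^2}{\eta}$ produces, verbatim as in the computation preceding~\eqref{eq:pause-1}, the decomposition $\frac{\gamma_{k+1}}{2}\|\z_k - \x^*\|^2 - \alpha_k \< G_k, \z_k - \x^* \> + \frac{\eta}{2}\|G_k\|^2$.

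The only genuinely new ingredient is a three-point inequality bounding $\< G_k, \mathbf{w} - \y_k \>$ at the test point $\mathbf{w} := (1-\alpha_k)\x_k + \alpha_k\x^*$; since $\mathbf{w} - \y_k = \alpha_k(\x^* - \z_k)$, this directly controls the cross term $-\alpha_k\< G_k, \z_k - \x^* \>$. To establish it I would split $G_k$ into its two pieces: convexity of $f$ at $\y_k$ gives $\< \nabla f(\y_k), \mathbf{w} - \y_k \> \le f(\mathbf{w}) - f(\y_k)$, while the subgradient inequality for $\g_{k+1}' \in \partial h(\x_{k+1}')$ gives $\< \g_{k+1}', \mathbf{w} - \x_{k+1}' \> \le h(\mathbf{w}) - h(\x_{k+1}')$. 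Expanding $\< \g_{k+1}', \mathbf{w} - \y_k \>$ through $\x_{k+1}'$ and using $\x_{k+1}' - \y_k = -\eta G_k$, then invoking the descent estimate~\eqref{eq:composiite-descent-2} (with $\eta = \frac{1}{L}$) to replace $f(\y_k)$ by $F(\x_{k+1}') - h(\x_{k+1}')$, the two inner-product terms combine via $\eta\< \nabla f(\y_k), G_k \> + \eta\< \g_{k+1}', G_k \> = \eta\|G_k\|^2$, which together with the smoothness remainder $\frac{\eta}{2}\|G_k\|^2$ leaves the net quadratic $-\frac{\eta}{2}\|G_k\|^2$. The upshot is the clean estimate
\begin{equation*}
    \< G_k, \mathbf{w} - \y_k \> \le F(\mathbf{w}) - F(\x_{k+1}') - \frac{\eta}{2}\|G_k\|^2 .
\end{equation*}

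With this estimate the argument closes mechanically. Substituting it into the expansion of $\Delta_{k+1}$, using $F(\x_{k+1}) \le F(\x_{k+1}')$ from the minimizing choice of $\x_{k+1}$ and then convexity $F(\mathbf{w}) \le (1-\alpha_k) F(\x_k) + \alpha_k F^*$, the $\pm\frac{\eta}{2}\|G_k\|^2$ terms and the $\pm F(\x_{k+1}')$ terms cancel, leaving $\Delta_{k+1} \le \frac{\gamma_{k+1}}{2}\|\z_k - \x^*\|^2 + (1-\alpha_k)\delta_k$, the exact analogue of~\eqref{eq:pause-2}. The same rearrangement as in the smooth case (writing $\frac{\gamma_{k+1}}{2}\|\z_k-\x^*\|^2 = \frac{\gamma_{k+1}}{\gamma_k}(\Delta_k - \delta_k)$ and dividing by $\gamma_{k+1}$) reproduces recursion~\eqref{eq:0}, and the same arithmetic with $\alpha_k = \frac{p}{k+p}$ and $p \ge 3$ bounds the coefficient of $\delta_k$ by $-\eta\frac{k+p}{p^2}$; telescoping then yields $\sum_k (k+p)\delta_k < \infty$. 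Since $\{\delta_k\}$ is nonincreasing by Lemma~\ref{lem:compoosite-descent}, Proposition~\ref{prop:basic} gives $\lim_{k\to\infty} k^2\delta_k = 0$.

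I expect the three-point inequality to be the main obstacle, as it is the sole place where the nonsmooth part enters and where one must verify that the gradient-mapping norm terms combine exactly; once it is in place, every subsequent line is a literal copy of the smooth proof with $\nabla f(\y_k)$ replaced by $G_k$ and $f$ replaced by $F$.
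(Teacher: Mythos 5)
Your proposal is correct and takes essentially the same route as the paper: the same Lyapunov functional and expansion of the $\z$-update, followed by the same three ingredients (the descent estimate, convexity of $f$ at $\y_k$, and the subgradient inequality for $h$ at $\x_{k+1}'$), with the identical cancellation $\eta\langle \nabla f(\y_k), G_k\rangle + \eta\langle \g_{k+1}', G_k\rangle = \eta\|G_k\|^2$ against the smoothness remainder. The only difference is presentational — you package these steps as a standalone three-point inequality, whereas the paper carries them out inline in the chain of estimates leading to its recursion (\ref{eq:0-composite}).
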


\begin{proof}
    Define  
    \begin{align*} 
        \Delta_k := \frac{\gamma_k}{2} \| \z_k - \x^* \|^2 + F (\x_k) - F (\x^*) , 
    \end{align*} 
    where $\gamma_{k+1} := \frac{\alpha_k^2}{\eta}$. 

    Then by the algorithm rule, we have 
    \begin{align}
        \Delta_{k+1} 
        =& \;  
        \frac{\gamma_{k+1}}{2} \| \z_{k+1} - \x^* \|^2 + f (\x_{k+1}) + h (\x_{k+1}) - f (\x^*) - h (\x^*) \nonumber \\ 
        \le& \; 
        \frac{\gamma_{k+1}}{2} \left\| \z_{k} - \x^* \right\|^2 - \frac{\eta \gamma_{k+1}}{\alpha_k} \< \nabla f (\y_k) + \g_{k+1}' , \z_k - \x^* \> \nonumber \\ 
        & + \frac{\eta^2 \gamma_{k+1}}{2\alpha_k^2} \| \nabla f (\y_k) + \g_{k+1}' \|^2 + F (\x_{k+1}') - F (\x^*) \nonumber \\ 
        =& \; 
        \frac{\gamma_{k+1}}{2} \left\| \z_{k} - \x^* \right\|^2 +  \< \nabla f (\y_k) + \g_{k+1}' , \alpha_k \x^* + (1 - \alpha_k ) \x_k - \y_k \> \nonumber \\ 
        & + \frac{\eta }{2 } \| \nabla f (\y_t) + \g_{k+1}' \|^2 + F (\x_{k+1}') - F (\x^*) . \label{eq:composite-1}
    \end{align} 

    By the descent property in Lemma \ref{lem:compoosite-descent}, we have 
    \begin{align}
        F (\x_{k+1}') 
        \le& \;  
        f (\y_k) + \< \nabla f (\y_k) , - \eta \nabla f (\y_k) - \eta \g_{k+1}' \> + \frac{ \eta }{2} \| \nabla f (\y_k) + \g_{k+1}' \|^2 + h (\x_{k+1}') \nonumber \\ 
        =& \; 
        f (\y_k) + h (\x_{k+1}') - \frac{\eta}{2} \| \nabla f (\y_k) + \g_{k+1}' \|^2 + \eta \< \nabla f (\y_k) + \g_{k+1}', \g_{k+1}' \> \label{eq:composite-2}
    \end{align} 

    Plugging (\ref{eq:composite-2}) into (\ref{eq:composite-1}) gives 
    \begin{align*}
        \Delta_{k+1} 
        \le& \; 
        \frac{\gamma_{k+1}}{2} \left\| \z_{k} - \x^* \right\|^2 +  \< \nabla f (\y_k) + \g_{k+1}' , \alpha_k \x^* + (1 - \alpha_k ) \x_k - \y_k \> \\ 
        & + \eta \< \nabla f (\y_k) + \g_{k+1}', \g_{k+1}' \> + f (\y_{k}) + h ( \x_{k+1}' ) - F (\x^*) \\ 
        {\le}& \; 
        \frac{\gamma_{k+1}}{2} \left\| \z_{k} - \x^* \right\|^2 + f ( \alpha_k \x^* + (1 - \alpha_k ) \x_k ) + h ( \x_{k+1}' ) + \eta \< \nabla f (\y_k) + \g_{k+1}', \g_{k+1}' \> \\ 
        & + \<  \g_{k+1}' , \alpha_k \x^* + (1 - \alpha_k ) \x_k - \x_{k+1}' - \eta \nabla f (\y_k) - \eta \g_{k+1}' \> - F (\x^*) , 
    \end{align*}
    where the last inequality uses convexity of $f$. Simplifying terms in the above inequality gives 
    \begin{align*}
        \Delta_{k+1} 
        \le& \;
        \frac{\gamma_{k+1}}{2} \left\| \z_{k} - \x^* \right\|^2  + f ( \alpha_k \x^* + (1 - \alpha_k ) \x_k ) + h ( \x_{k+1}' )  \\ 
        & + \<  \g_{k+1}' , \alpha_k \x^* + (1 - \alpha_k ) \x_k - \x_{k+1}' \> - F (\x^*) \\
        \le& \; 
        \frac{\gamma_{k+1}}{2} \left\| \z_{k} - \x^* \right\|^2 + f ( \alpha_k \x^* + (1 - \alpha_k ) \x_k ) + h ( \alpha_k \x^* + (1 - \alpha_k ) \x_k ) - F (\x^*) , 
    \end{align*}
    where the last inequality uses convexity of $h$. Another use of convex properties of $f$ and $h$ gives 
    \begin{align*}
        \Delta_{k+1} 
        \le 
        \frac{\gamma_{k+1}}{2} \left\| \z_{k} - \x^* \right\|^2 + \( 1 - \alpha_k \) \( F (\x_k) - F (\x^*) \) . 
    \end{align*}

    Rearranging terms in the above inequality gives 
    \begin{align} 
        \frac{\Delta_{k+1} }{\gamma_{k+1}}
        \le& \;  
        \frac{\Delta_k }{\gamma_k} + \( \frac{1}{\gamma_{k+1}} - \frac{\alpha_k}{\gamma_{k+1}} - \frac{ 1 }{\gamma_k}  \) ( F (\x_k) - F (\x^*) ) . 
        \label{eq:0-composite} 
    \end{align} 
    
    Since $\alpha_k = \frac{p}{k+p}$ for some $p \ge 3$, we have that  
    \begin{align*} 
        \frac{1}{\gamma_{k+1}} - \frac{\alpha_k}{\gamma_{k+1}} - \frac{ 1 }{\gamma_k} 
        =& \;  
        \frac{ \eta (k+p)^2 }{ p^2} - \frac{\eta (k+p)}{p} - \frac{ \eta (k+p-1)^2 }{ p^2 } \\ 
        =& \;  
        \eta \frac{ \( 2 - p \) ( k+p ) - 1 }{  p^2 } \le - \eta \frac{k+p}{ p^2} . 
    \end{align*} 

    Recall we write $\delta_k := F (\x_k) - F (\x^*)$ for simplicity. Rearranging terms in (\ref{eq:0-composite}) gives 
    \begin{align*}
        \eta \frac{k+p}{ p^2} \delta_k \le \frac{\Delta_k}{\gamma_k} - \frac{\Delta_{k+1}}{\gamma_{k+1}}, 
    \end{align*}
    and thus 
    \begin{align} 
        \sum_{k=1}^\infty \frac{k+p}{ p^2} \delta_k < \infty . \label{eq:sum-composite}
    \end{align} 
    By Lemma \ref{lem:decrease}, we know that $ \{ \delta_k \}_{k=1}^\infty $ is decreasing. 
    Thus by Proposition \ref{prop:basic}, we know $  \limsup\limits_{k \to \infty} k^2 \delta_k = \lim\limits_{k \to \infty} k^2 \delta_k = 0 $. 
    
\end{proof}

\color{black}




\section{Experiments}
We compare the NSA algorithm (Algorithm \ref{alg:nsa-comp}) with classic gradient descent and benchmark acceleration methods, including the AFBM method \cite{nesterov1983method,Nesterov2013,doi:10.1137/15M1046095}, the FISTA algorithm \cite{2009A}, and the NSA algorithm in its original form \cite{2017Random}. It has been empirically observed that NAG with over-damping may converge faster \citep{JMLR:v17:15-084}. For this reason, we compare NSA with possibly over-damped acceleration methods. 
Typically, the damping factor $\ge 3$ yields better empirical performance. 
In all experiments, we pick the same values of learning rate $\eta$ for all methods.

The start point $\x_0$ is also same for all algorithms. In the first series of experiments (Figure \ref{fig:two}), we assess the methods across various standard machine learning and optimization tasks, such as regression, logistic regression, and matrix completion. In the second series of experiments (Figure \ref{fig:three}), we demonstrate the benefits of these methods by applying them to a standard fully-connected neural network trained on the Iris dataset \cite{iris_53}, highlighting their utility in key real-world applications.



More experiments for the zeroth-order version of NSA (Algorithm \ref{alg:nsa-inexact} with zeroth-order gradient estimators) can be found in the Appendix. 


\begin{enumerate}[label=\textbf{Figure \ref{fig:two}\alph*:}, leftmargin=2\parindent, align=left] 
    \item The cost function of least square regression
    \begin{equation*}
        f(\x) = \frac{1}{2}\|A \x- \b \|^2,
    \end{equation*}
    where $A$ is a full-rank $400 \times 200$ random matrix   with entries sampled as $i.i.d.$ standard Gaussian, $ \: \mathcal{N}(0,1)$ and $\b$ is a $400 \times 1$ standard Gaussian vector. We set $\eta=0.0005$ for all algorithms.
    \item The cost function of logistics regression
    \begin{equation*}
    f(\x)=\sum_{i=1}^{200}\left[- y_i A_i^\top \x+ \log(1+e^{A_i^\top \x})\right],
    \end{equation*} 
    where $A =\left(A_1 , ..., A_{200}\right)^\top$ is a $200\times 5$ random matrix sampled from $i.i.d.\:  \mathcal{N}(0,1)$ and $\y = (y_1, \cdots, y_{200})^\top$ is a $200\times 1$ random vector of labels sampled from $i.i.d.\: Ber(1,0.5)$. We set $\eta=0.005$ for all algorithms.
    \item The cost function of  lasso regression
    \begin{equation*}
        f(\x) = \frac{1}{2} \|A \x-\b\|^2+\lambda \|\x\|_{1},
    \end{equation*}
     where $A$ is a full-rank $400 \times 200$ random matrix   with entries sampled as $i.i.d.$ standard Gaussian, $ \: \mathcal{N}(0,1)$ and $\b$ is a $400 \times 1$ standard Gaussian vector. We set $\eta=0.0005$ for all algorithms. 
    \item  The \texttt{log\_exp\_sum} function
    \begin{equation*}
    f(\x) = \rho \log \left[ \sum_{i=1}^{n}e^{(A_i^\top \x-b_i)/\rho}\right],
    \end{equation*}
    where $A = \left[ A_1,...A_{200}\right]^T$ is a $40\times 10$ random matrix sampled from $i.i.d. \:\mathcal{N}(0,1)$ and $\b = (b_1, b_2, \cdots, b_{200})^\top$ is a $40\times 1$ random vector  sampled from $i.i.d.\: \mathcal{N}(0,1)$. Let $ \rho = 5$. We set $\eta=0.5$ for all algorithms.
    \item  The cost function of  ridge regression
    \begin{equation*}
        f(\x) = \frac{1}{2} \|A\x-\b\|^2+\lambda \|\x\|_{2}^2,
    \end{equation*}
     where $A$ is a full-rank $400 \times 200$ random matrix   with entries sampled as $i.i.d.$ standard Gaussian, $ \: \mathcal{N}(0,1)$ and $\b$ is a $400 \times 1$ standard Gaussian vector. We set $\eta=0.0005$ for all algorithms. 
    \item  The cost function of matrix completion
    \begin{equation*}
    f(X)=\frac{1}{2}\|X_{ob}-A_{ob}\|^2+\lambda\|X\|_{*},
    \end{equation*}
    where  $A_{ob}$ includes the available elements in matrix $A$ and only 20\% of the elements in $A$ can be observed. The position of these elements is randomly chosen and the original matrix $A$ is a rank-3  $50\times 40$ matrix with the eigenvalues \textemdash$\left[1,2,3\right]$. To generate $A$, we  generate $U,S$ and $V$ where $S=diag\{1,2,3\}$ and $U,V^\top$ are rank-3 $50\times7$ matrices generated from $i.i.d \: Uniform(0,1)$. Then we calculate $A$ by $A = USV$, and set $\lambda=0.05$. We set $\eta=1$ for all algorithms.
\end{enumerate} 

\begin{figure}[H]
     \centering
     \subfloat[][$\min_{\x} \frac{1}{2}\|A \x - \b\|^2$]{\includegraphics[width = 0.45\linewidth]{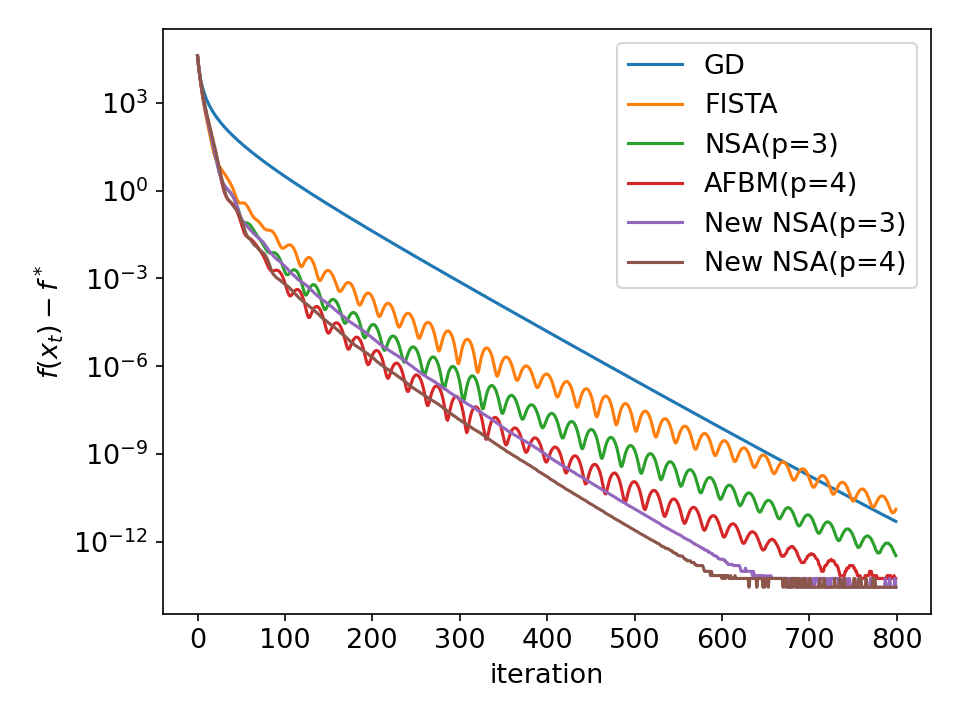} \label{fig1}} \hfill
     \subfloat[][$ \min_{\x} \sum_{i=1}^{n}-y_iA_i^\top \x+ \log(1+e^{A_i^\top \x}) $]{\includegraphics[width = 0.45\linewidth]{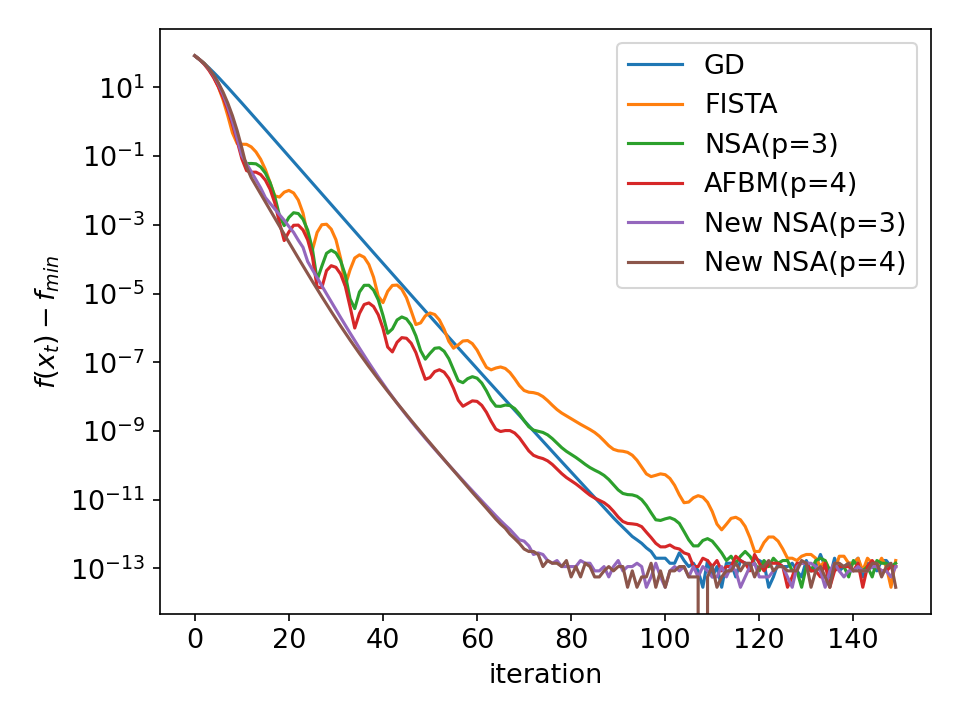} \label{fig2}} \\ 
     \subfloat[][$ \min_{\x} \frac{1}{2} \|A \x- \b\|^2+\lambda \|x\|_{1} $]{\includegraphics[width = 0.45\linewidth]{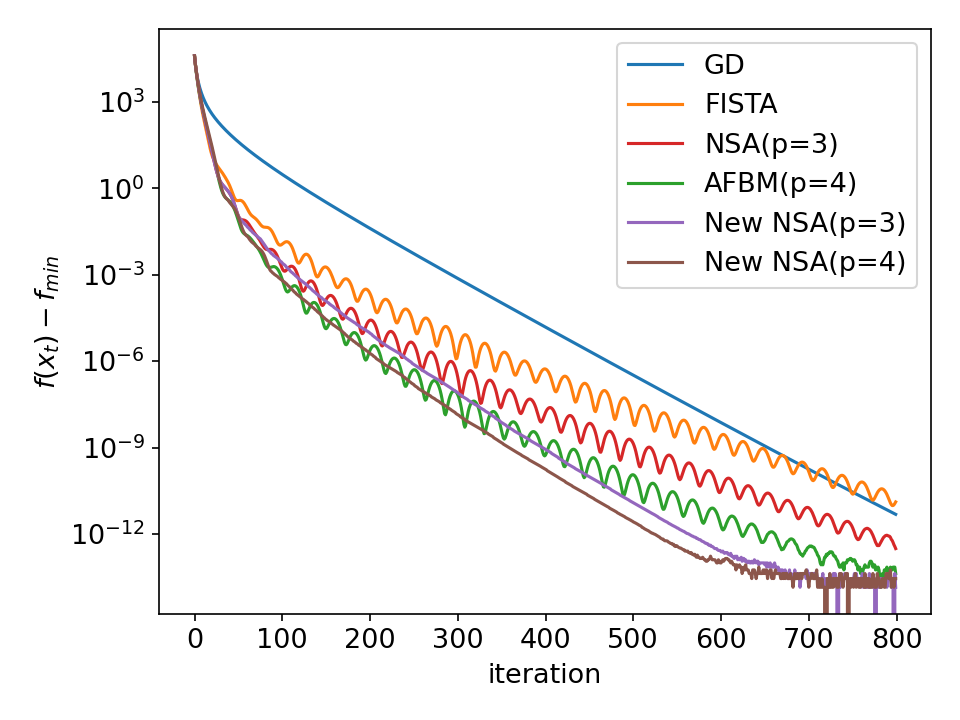} \label{fig3} } \hfill
     \subfloat[][$ \min_{\x} \rho \log \[ \sum_{i=1}^{n} \exp(A_i^\top \x-b_i)/\rho \] $]{\includegraphics[width = 0.45\linewidth]{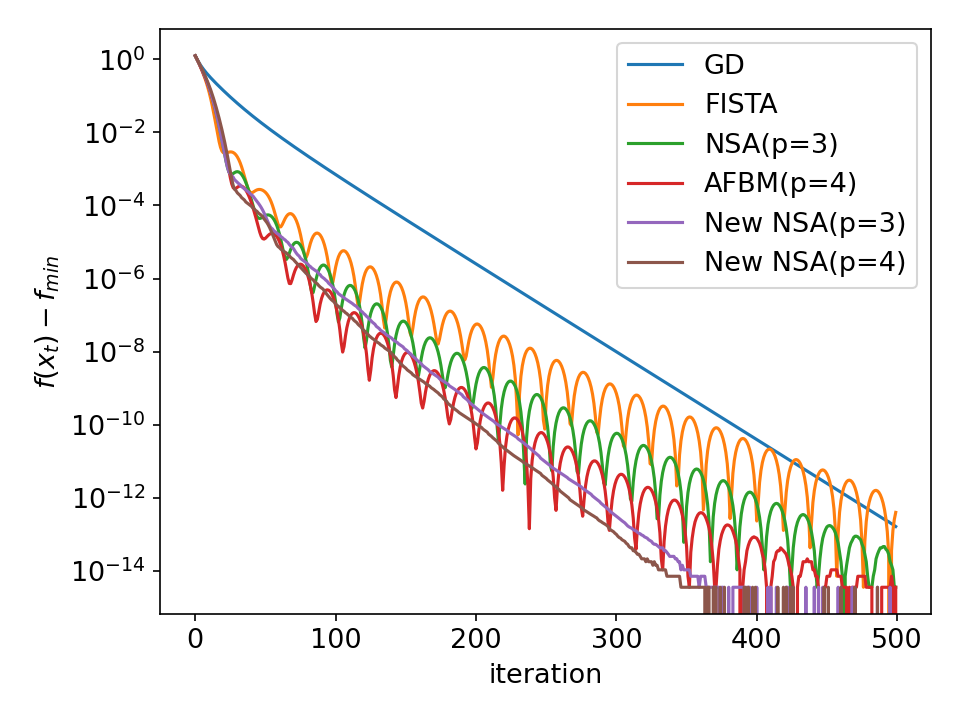} \label{fig4}} \\ 
     \subfloat[][$\min_{\x}\frac{1}{2} \|A \x-\b\|^2+\lambda\|\x\|_2^2 $]{\includegraphics[width = 0.45\linewidth]{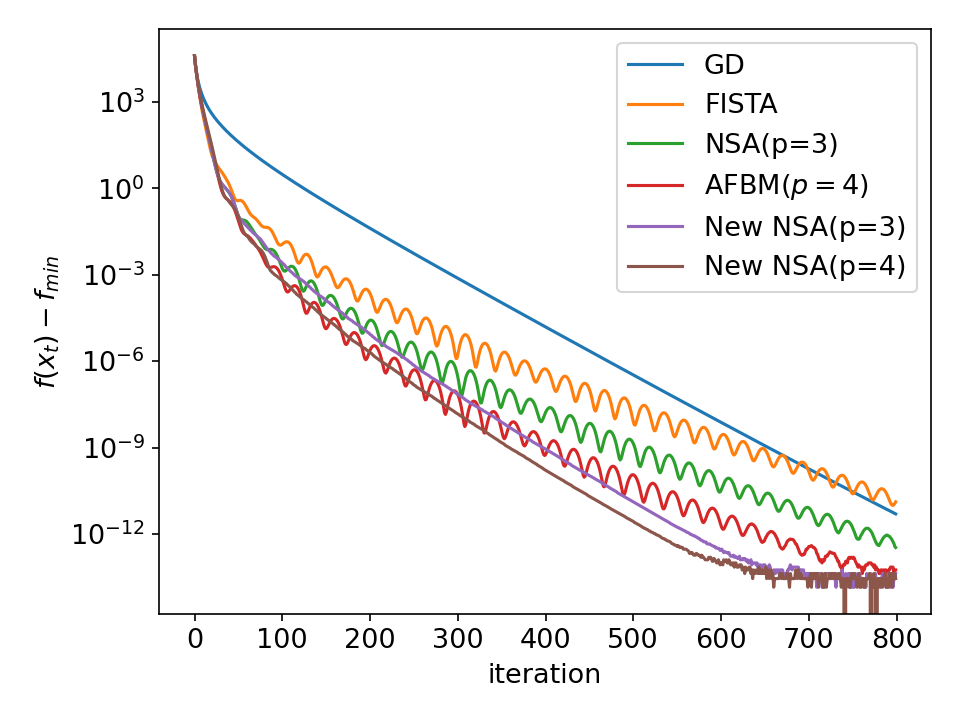} \label{fig5}} \hfill
     \subfloat[][$ \min_{\x} \frac{1}{2}\|X_{ob}-A_{ob}\|^2+\lambda \|X\|_* $]{\includegraphics[width = 0.45\linewidth]{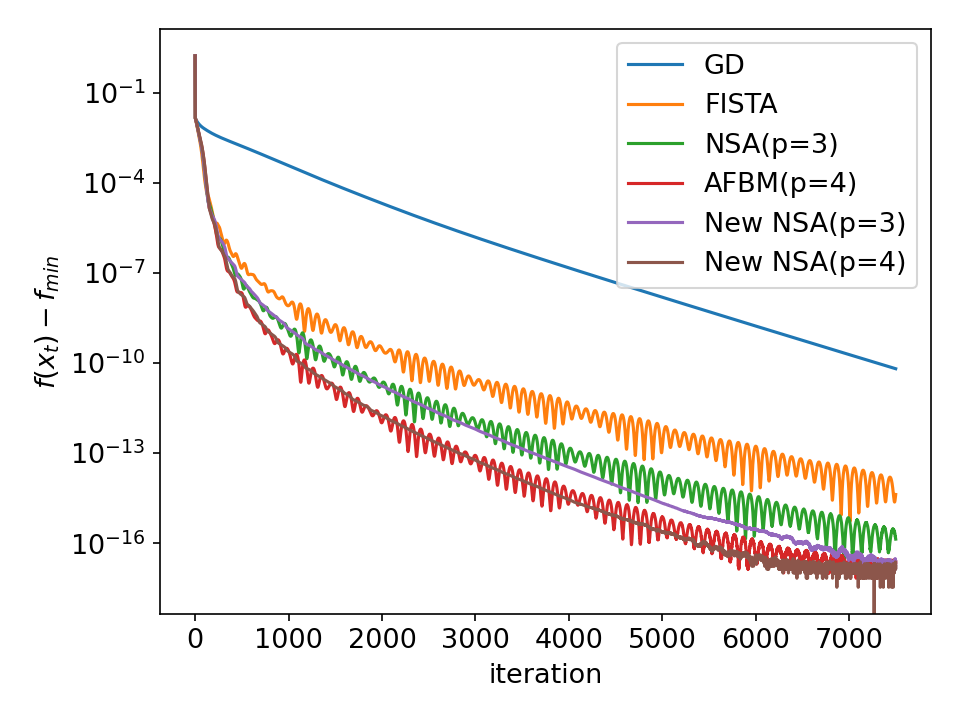} \label{fig6}} 
     \caption{Comparison of Algorithm \ref{alg:nsa-comp} (referred to as `New NSA') with benchmark methods. In the graph legend, GD represents Gradient Descent; NSA denotes the original acceleration method by Nesterov and Spokoiny \cite{2017Random}; and FISTA and AFBM methods were introduced in \cite{2009A} and \cite{nesterov1983method,Nesterov2013}, respectively. In this context, $p$ is used to indicate the damping factor. As the graphs for NSA and AFBM are closely aligned when using the same damping factor, we present only the NSA method with a damping factor of $p = 3$ and the AFBM method with a damping factor of $p = 4$. \label{fig:two} }
\end{figure}




\begin{enumerate}[label=\textbf{Figure \ref{fig:three}\alph*:}, leftmargin=2\parindent, align=left] 
    \item 
    We train
    a simple feedforward neural network with one hidden layer. The input data is passed through the hidden layer, where it is transformed using a sigmoid activation function. The result is then forwarded to the output layer, where a softmax activation function converts it into a probability distribution for classification. The network uses weights and biases to adjust the output at each layer. 
    We set $\eta=0.12$ for all algorithms. 
    We populate the origin parameter matrix with random samples from a uniform distribution over $[0, 1)$. The $y$-axis labels the training loss of the learning process. 
    \item 
    The parameters and settings are same as the experiment above. The only difference is that we change the horizontal axis from iteration to time, which means that we can compare the convergence speed of different algorithms within the same computation time.
\end{enumerate}

\begin{figure}[H]
     \centering
     \subfloat[][
     ]{\includegraphics[width = 0.45\linewidth]{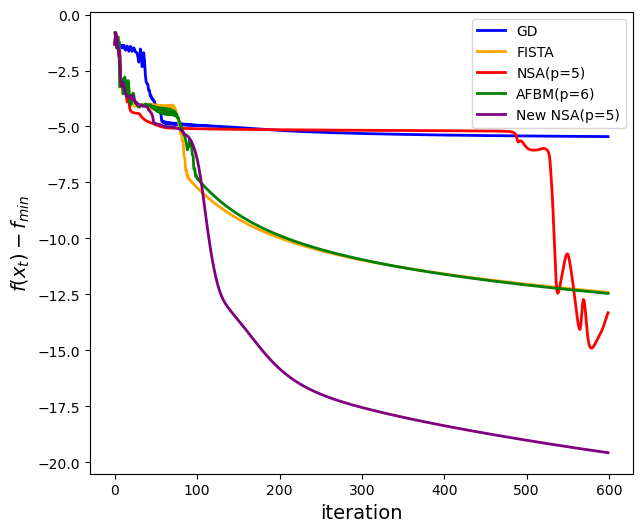} \label{fig1-new}} \hfill
     \centering
     \subfloat[][
     ]{\includegraphics[width = 0.45\linewidth]{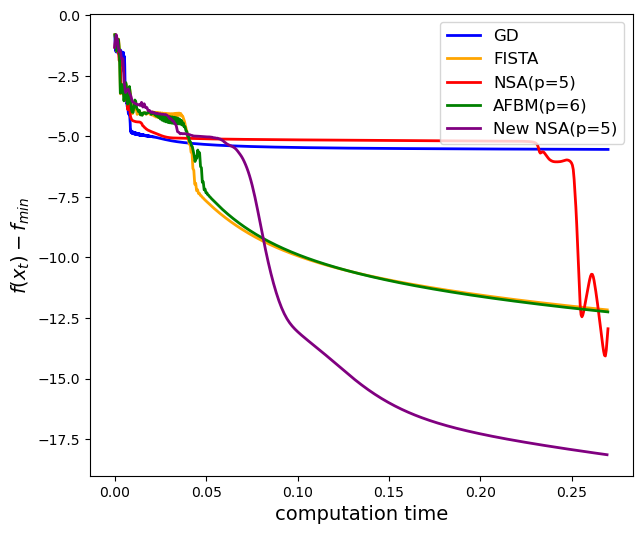} \label{fig2-new}} \hfill
      \caption{Comparison of Algorithm \ref{alg:nsa-comp} (referred to as `New NSA') with benchmark methods \if\highlight1 \color{red} \fi{($y$-axis in logarithmic scale)}. \color{black} In the graph legend, GD represents Gradient Descent; NSA denotes the original acceleration method by Nesterov and Spokoiny \cite{2017Random}; and FISTA and AFBM methods were introduced in \cite{2009A} and \cite{nesterov1983method,Nesterov2013}, respectively. In this context, $p$ is used to indicate the damping factor. We present only the NSA method with a damping factor of $p = 5$ and the AFBM method with a damping factor of $p = 6$. \label{fig:three} }
\end{figure}

\section{Conclusion}
\label{sec:conc}

This paper studies the NSA algorithm -- a variant of an acceleration method of Nesterov and Spokoiny. NSA converges at rate $o(k^{-2})$ for general smooth convex programs. To our knowledge, this work is the first to establish an $ o(k^{-2}) $ convergence rate for the function value and an $ o(\frac{1}{k^{3} \log k }) $ rate for the squared gradient norm, all while maintaining strictly monotonic descent of function value. 

This work also presents a comprehensive study of the NSA algorithm, providing  a zeroth-order variant capable of handling inexact gradients, a extension that handles convex composite problems, and a continuous-time analysis of its dynamics. 

\section*{Declaration of Generative AI and AI-assisted technologies in the writing process}

During the preparation of this work the authors used ChatGPT (Chat-4o-mini) in order to improve readability and polish writing. After using this tool/service, the authors reviewed and edited the content as needed and take full responsibility for the content of the publication. 

\bibliographystyle{apalike} 
\bibliography{bibliography}

@article{shahriari2015taking,
  title={Taking the human out of the loop: A review of Bayesian optimization},
  author={Shahriari, Bobak and Swersky, Kevin and Wang, Ziyu and Adams, Ryan P and De Freitas, Nando},
  journal={Proceedings of the IEEE},
  volume={104},
  number={1},
  pages={148--175},
  year={2015},
  publisher={IEEE}
}

@inproceedings{
malladi2023finetuning,
title={Fine-Tuning Language Models with Just Forward Passes},
author={Sadhika Malladi and Tianyu Gao and Eshaan Nichani and Alex Damian and Jason D. Lee and Danqi Chen and Sanjeev Arora},
booktitle={Thirty-seventh Conference on Neural Information Processing Systems},
year={2023},
url={https://openreview.net/forum?id=Vota6rFhBQ}
}

@misc{iris_53,
  author       = {Fisher, R. A.},
  title        = {{Iris}},
  year         = {1936},
  howpublished = {UCI Machine Learning Repository},
  note         = {{DOI}: https://doi.org/10.24432/C56C76}
}

@inproceedings{wang2018stochastic,
  title={Stochastic zeroth-order optimization in high dimensions},
  author={Wang, Yining and Du, Simon and Balakrishnan, Sivaraman and Singh, Aarti},
  booktitle={International Conference on Artificial Intelligence and Statistics},
  pages={1356--1365},
  year={2018},
  organization={PMLR}
}

@article{balasubramanian2021zeroth,
  title={Zeroth-Order Nonconvex Stochastic Optimization: Handling Constraints, High Dimensionality, and Saddle Points},
  author={Balasubramanian, Krishnakumar and Ghadimi, Saeed},
  journal={Foundations of Computational Mathematics},
  pages={1--42},
  year={2021},
  publisher={Springer}
}

@article{polyak2007newton,
  title={Newton’s method and its use in optimization},
  author={Polyak, Boris T},
  journal={European Journal of Operational Research},
  volume={181},
  number={3},
  pages={1086--1096},
  year={2007},
  publisher={Elsevier}
}

@article{pennanen2012introduction,
  title={Introduction to convex optimization in financial markets},
  author={Pennanen, Teemu},
  journal={Mathematical programming},
  volume={134},
  number={1},
  pages={157--186},
  year={2012},
  publisher={Springer}
}

@book{conn2009introduction, 
  title={Introduction to derivative-free optimization},
  author={Conn, Andrew R and Scheinberg, Katya and Vicente, Luis N},
  year={2009},
  publisher={SIAM}
}

@inproceedings{flaxman2005online,
  title={Online convex optimization in the bandit setting: gradient descent without a gradient},
  author={Flaxman, Abraham D and Kalai, Adam Tauman and McMahan, H Brendan},
  booktitle={Proceedings of the sixteenth annual ACM-SIAM symposium on Discrete algorithms},
  pages={385--394},
  year={2005}
}

@article{duchi2015optimal,
  title={Optimal rates for zero-order convex optimization: The power of two function evaluations},
  author={Duchi, John C and Jordan, Michael I and Wainwright, Martin J and Wibisono, Andre},
  journal={IEEE Transactions on Information Theory},
  volume={61},
  number={5},
  pages={2788--2806},
  year={2015},
  publisher={IEEE}
}

@article{nesterov2017random,
  title={Random gradient-free minimization of convex functions},
  author={Nesterov, Yurii and Spokoiny, Vladimir},
  journal={Foundations of Computational Mathematics},
  volume={17},
  number={2},
  pages={527--566},
  year={2017},
  publisher={Springer}
}

@article{nemirovski2009robust,
  title={Robust stochastic approximation approach to stochastic programming},
  author={Nemirovski, Arkadi and Juditsky, Anatoli and Lan, Guanghui and Shapiro, Alexander},
  journal={SIAM Journal on optimization},
  volume={19},
  number={4},
  pages={1574--1609},
  year={2009},
  publisher={SIAM}
}

@article{liu2020primer,
  title={A primer on zeroth-order optimization in signal processing and machine learning: Principals, recent advances, and applications},
  author={Liu, Sijia and Chen, Pin-Yu and Kailkhura, Bhavya and Zhang, Gaoyuan and Hero III, Alfred O and Varshney, Pramod K},
  journal={IEEE Signal Processing Magazine},
  volume={37},
  number={5},
  pages={43--54},
  year={2020},
  publisher={IEEE}
}

@article{kiefer1952stochastic,
  title={Stochastic estimation of the maximum of a regression function},
  author={Kiefer, Jack and Wolfowitz, Jacob},
  journal={The Annals of Mathematical Statistics},
  pages={462--466},
  year={1952},
  publisher={JSTOR}
}

@inproceedings{nesterov1983method,
  title={A method of solving a convex programming problem with convergence rate $O(k^{-2})$},
  author={Nesterov, Yurii},
  booktitle={Doklady Akademii Nauk},
  volume={269},
  number={3},
  pages={543--547},
  year={1983},
  organization={Russian Academy of Sciences}
}

@article{srinivas1994genetic,
  title={Genetic algorithms: A survey},
  author={Srinivas, Mandavilli and Patnaik, Lalit M},
  journal={computer},
  volume={27},
  number={6},
  pages={17--26},
  year={1994},
  publisher={IEEE}
}

@article{robbins1951stochastic,
  title={A stochastic approximation method},
  author={Robbins, Herbert and Monro, Sutton},
  journal={The annals of mathematical statistics},
  pages={400--407},
  year={1951},
  publisher={JSTOR}
}

@inproceedings{10.5555/3692070.3694514,
author = {Zhang, Yihua and Li, Pingzhi and Hong, Junyuan and Li, Jiaxiang and Zhang, Yimeng and Zheng, Wenqing and Chen, Pin-Yu and Lee, Jason D. and Yin, Wotao and Hong, Mingyi and Wang, Zhangyang and Liu, Sijia and Chen, Tianlong},
title = {Revisiting zeroth-order optimization for memory-efficient LLM fine-tuning: a benchmark},
year = {2025},
publisher = {JMLR.org},
abstract = {In the evolving landscape of natural language processing (NLP), fine-tuning pre-trained Large Language Models (LLMs) with first-order (FO) optimizers like SGD and Adam has become standard. Yet, as LLMs grow in size, the substantial memory overhead from back-propagation (BP) for FO gradient computation presents a significant challenge. Addressing this issue is crucial, especially for applications like on-device training where memory efficiency is paramount. This paper proposes a shift towards BP-free, zeroth-order (ZO) optimization as a solution for reducing memory costs during LLM fine-tuning, building on the initial concept introduced by Malladi et al. (2023). Unlike traditional ZO-SGD methods, our work expands the exploration to a wider array of ZO optimization techniques, through a comprehensive, first-of-its-kind benchmarking study across five LLM families (Roberta, OPT, LLaMA, Vicuna, Mistral), three task complexities, and five fine-tuning schemes. Our study unveils previously overlooked optimization principles, highlighting the importance of task alignment, the role of the forward gradient method, and the balance between algorithm complexity and fine-tuning performance. We further introduce novel enhancements to ZO optimization, including block-wise descent, hybrid training, and gradient sparsity. Our study offers a promising direction for achieving further memory-efficient LLM fine-tuning. Codes to reproduce all our experiments are at https://github.com/ZO-Bench/ZO-LLM.},
booktitle = {Proceedings of the 41st International Conference on Machine Learning},
articleno = {2444},
numpages = {18},
location = {Vienna, Austria},
series = {ICML'24}
}

@book{beck2017first,
  title={First-order methods in optimization},
  author={Beck, Amir},
  year={2017},
  publisher={SIAM}
}

@article{lemarechal2012cauchy,
  title={Cauchy and the gradient method},
  author={Lemar{\'e}chal, Claude},
  journal={Doc Math Extra},
  volume={251},
  number={254},
  pages={10},
  year={2012}
}

@article{ALVAREZ2002747,
title = {A second-order gradient-like dissipative dynamical system with Hessian-driven damping.: Application to optimization and mechanics},
journal = {Journal de Mathématiques Pures et Appliquées},
volume = {81},
number = {8},
pages = {747-779},
year = {2002},
issn = {0021-7824},
doi = {https://doi.org/10.1016/S0021-7824(01)01253-3},
url = {https://www.sciencedirect.com/science/article/pii/S0021782401012533},
author = {F. Alvarez and H. Attouch and J. Bolte and P. Redont},
}

@article{doi:10.1080/01630560008816971,
author = {   J.   Schropp  and    I.   Singer },
title = {A dynamical systems approach to constrained minimization},
journal = {Numerical Functional Analysis and Optimization},
volume = {21},
number = {3-4},
pages = {537-551},
year  = {2000},
publisher = {Taylor & Francis},
doi = {10.1080/01630560008816971},
}

@article{ATTOUCH20165734,
author = {Attouch, Hedy and Peypouquet, Juan and Redont, Patrick},
title = {Fast convex optimization via inertial dynamics with Hessian driven damping},
journal = {Journal of Differential Equations},
volume = {261},
number = {10},
pages = {5734-5783},
year = {2016},
issn = {0022-0396},
doi = {https://doi.org/10.1016/j.jde.2016.08.020},
url = {https://www.sciencedirect.com/science/article/pii/S002203961630225X}
}

@book{arnold1989mathematical,
  author = {Arnold, V.I.},
  title = {Mathematical methods of classical mechanics},
  volume = 60,
  year = 1989,
    publisher = {Springer Science \& Business Media}
}

@article{JMLR:v22:20-195,
  author  = {Ashia C. Wilson and Ben Recht and Michael I. Jordan},
  title   = {A Lyapunov Analysis of Accelerated Methods in Optimization},
  journal = {Journal of Machine Learning Research},
  year    = {2021},
  volume  = {22},
  number  = {113},
  pages   = {1--34},
  url     = {http://jmlr.org/papers/v22/20-195.html}
}

@article{
doi:10.1073/pnas.1614734113,
author = {Andre Wibisono  and Ashia C. Wilson  and Michael I. Jordan },
title = {A variational perspective on accelerated methods in optimization},
journal = {Proceedings of the National Academy of Sciences},
volume = {113},
number = {47},
pages = {E7351-E7358},
year = {2016},
doi = {10.1073/pnas.1614734113},
URL = {https://www.pnas.org/doi/abs/10.1073/pnas.1614734113},
eprint = {https://www.pnas.org/doi/pdf/10.1073/pnas.1614734113}
}

@article{attouch2018fast,
  title={Fast convergence of inertial dynamics and algorithms with asymptotic vanishing viscosity},
  author={Attouch, Hedy and Chbani, Zaki and Peypouquet, Juan and Redont, Patrick},
  journal={Mathematical Programming},
  volume={168},
  pages={123--175},
  year={2018},
  publisher={Springer}
}

@article{doi:10.1137/130910294,
author = {Attouch, H\'{e}dy and Peypouquet, Juan and Redont, Patrick},
title = {A Dynamical Approach to an Inertial Forward-Backward Algorithm for Convex Minimization},
journal = {SIAM Journal on Optimization},
volume = {24},
number = {1},
pages = {232-256},
year = {2014},
doi = {10.1137/130910294}
}

@Book{RockWets98,
  Title                    = {Variational Analysis},
  Author                   = {{R. Tyrrell} Rockafellar and Roger J.-B. Wets},
  Publisher                = {Springer Verlag},
  Year                     = {1998},

  Address                  = {Heidelberg, Berlin, New York}
}

@article{JMLR:v17:15-084,
  author  = {Weijie Su and Stephen Boyd and Emmanuel J. Cand{{\`e}}s},
  title   = {A Differential Equation for Modeling Nesterov's Accelerated Gradient Method: Theory and Insights},
  journal = {Journal of Machine Learning Research},
  year    = {2016},
  volume  = {17},
  number  = {153},
  pages   = {1--43},
  url     = {http://jmlr.org/papers/v17/15-084.html}
}

@book{blair1985problem,
  title={Problem complexity and method efficiency in optimization},
  author={Arkadi Nemirovsky and David Yudin},
  year={1983},
  publisher={John Wiley \& Sons}
}

@article{allen2014linear,
  title={Linear coupling: An ultimate unification of gradient and mirror descent},
  author={Allen-Zhu, Zeyuan and Orecchia, Lorenzo},
  journal={arXiv preprint arXiv:1407.1537},
  year={2014}
}

@book{nest2003,
  title     = "Introductory Lectures on Convex Optimization: A Basic Course",
  author    = "Nesterov, Yurii",
  year      = 2003,
  publisher = "Springer New York, NY"
}

@book{nest2018,
  title     = "Lectures on Convex Optimization",
  author    = "Nesterov, Yurii",
  year      = 2018,
  publisher = "Springer Cham"
}

@article{1964Some,
  title={Some methods of speeding up the convergence of iteration methods},
  author={ Polyak, B. T. },
  journal={Ussr Computational Mathematics \& Mathematical Physics},
  volume={4},
  number={5},
  pages={1-17},
  year={1964},
}

@article{beck2009fast2,
  title={Fast gradient-based algorithms for constrained total variation image denoising and deblurring problems},
  author={Beck, Amir and Teboulle, Marc},
  journal={IEEE transactions on image processing},
  volume={18},
  number={11},
  pages={2419--2434},
  year={2009},
  publisher={IEEE}
}

@article{li2015accelerated,
  title={Accelerated proximal gradient methods for nonconvex programming},
  author={Li, Huan and Lin, Zhouchen},
  journal={Advances in neural information processing systems},
  volume={28},
  year={2015}
}

@inproceedings{li2017convergence,
  title={Convergence analysis of proximal gradient with momentum for nonconvex optimization},
  author={Li, Qunwei and Zhou, Yi and Liang, Yingbin and Varshney, Pramod K},
  booktitle={International Conference on Machine Learning},
  pages={2111--2119},
  year={2017},
  organization={PMLR}
}

@article{2009A,
  title={A Fast Iterative Shrinkage-Thresholding Algorithm for Linear Inverse Problems},
  author={ Beck, Amir  and  Teboulle, Marc },
  journal={Siam J Imaging Sciences},
  volume={2},
  number={1},
  pages={183-202},
  year={2009},
}

@article{10.1093/imaiai/iaad014,
    author = {Feng, Yasong and Wang, Tianyu},
    title = "{Stochastic zeroth-order gradient and Hessian estimators: variance reduction and refined bias bounds}",
    journal = {Information and Inference: A Journal of the IMA},
    volume = {12},
    number = {3},
    pages = {iaad014},
    year = {2023},
    month = {05},
    issn = {2049-8772},
    doi = {10.1093/imaiai/iaad014},
    url = {https://doi.org/10.1093/imaiai/iaad014},
    eprint = {https://academic.oup.com/imaiai/article-pdf/12/3/iaad014/50514827/iaad014.pdf},
}

@inproceedings{Flammarion2015FromAT,
  title={From Averaging to Acceleration, There is Only a Step-size},
  author={Nicolas Flammarion and Francis R. Bach},
  booktitle={Annual Conference Computational Learning Theory},
  year={2015},
  url={https://api.semanticscholar.org/CorpusID:1169456}
}

@article{doi:10.1137/18M1172314,
author = {Diakonikolas, Jelena and Orecchia, Lorenzo},
title = {The Approximate Duality Gap Technique: A Unified Theory of First-Order Methods},
journal = {SIAM Journal on Optimization},
volume = {29},
number = {1},
pages = {660-689},
year = {2019},
doi = {10.1137/18M1172314},
}

@inproceedings{NEURIPS2018_44968aec,
 author = {Zhang, Jingzhao and Mokhtari, Aryan and Sra, Suvrit and Jadbabaie, Ali},
 booktitle = {Advances in Neural Information Processing Systems},
 editor = {S. Bengio and H. Wallach and H. Larochelle and K. Grauman and N. Cesa-Bianchi and R. Garnett},
 pages = {},
 publisher = {Curran Associates, Inc.},
 title = {Direct Runge-Kutta Discretization Achieves Acceleration},
 url = {https://proceedings.neurips.cc/paper_files/paper/2018/file/44968aece94f667e4095002d140b5896-Paper.pdf},
 volume = {31},
 year = {2018}
}

@techreport{nesterov2011random,
  title={Random gradient-free minimization of convex functions},
  author={Nesterov, Yurii},
  year={2011},
  institution={Universit{\'e} catholique de Louvain, Center for Operations Research and Econometrics}
}

@article{doi:10.1137/15M1046095,
author = {Attouch, Hedy and Peypouquet, Juan},
title = {The Rate of Convergence of Nesterov's Accelerated Forward-Backward Method is Actually Faster Than $1/k^2$},
journal = {SIAM Journal on Optimization},
volume = {26},
number = {3},
pages = {1824-1834},
year = {2016},
doi = {10.1137/15M1046095},
URL = { 
        https://doi.org/10.1137/15M1046095
},
eprint = { 
        https://doi.org/10.1137/15M1046095
}
,
    abstract = { The forward-backward algorithm is a powerful tool for solving optimization problems with an additively separable and smooth plus nonsmooth structure. In the convex setting, a simple but ingenious acceleration scheme developed by Nesterov improves the theoretical rate of convergence for the function values from the standard \$\mathcal O(k^{-1})\$ down to \$\mathcal O(k^{-2})\$. In this short paper, we prove that the rate of convergence of a slight variant of Nesterov's accelerated forward-backward method, which produces convergent sequences, is actually \$o(k^{-2})\$, rather than \$\mathcal O(k^{-2})\$. Our arguments rely on the connection between this algorithm and a second-order differential inclusion with vanishing damping. }
}

@article{2017Random,
  title={Random gradient-free minimization of convex functions},
  author={ Nesterov, Yurii },
  journal={Foundations of Computational Mathematics},
  volume={17},
  number={2},
  pages={527-566},
  year={2017},
}

@article{cartan2017differential,
  title={Differential calculus on normed spaces: A course in analysis},
  author={Cartan, Henri Paul and Maestro, Karo and Moore, John and Husem{\"o}ller, Dale},
  journal={(No Title)},
  year={2017}
}

@article{ghadimi2013stochastic,
  title={Stochastic first-and zeroth-order methods for nonconvex stochastic programming},
  author={Ghadimi, Saeed and Lan, Guanghui},
  journal={SIAM journal on optimization},
  volume={23},
  number={4},
  pages={2341--2368},
  year={2013},
  publisher={SIAM}
}

@inproceedings{ji2019improved,
  title={Improved zeroth-order variance reduced algorithms and analysis for nonconvex optimization},
  author={Ji, Kaiyi and Wang, Zhe and Zhou, Yi and Liang, Yingbin},
  booktitle={International conference on machine learning},
  pages={3100--3109},
  year={2019},
  organization={PMLR}
}

@article{jordan2015machine,
  title={Machine learning: Trends, perspectives, and prospects},
  author={Jordan, Michael I and Mitchell, Tom M},
  journal={Science},
  volume={349},
  number={6245},
  pages={255--260},
  year={2015},
  publisher={American Association for the Advancement of Science}
}

@book{orfanidis1995introduction,
  title={Introduction to signal processing},
  author={Orfanidis, Sophocles J},
  year={1995},
  publisher={Prentice-Hall, Inc.}
}

@inproceedings{liu2018zeroth,
  title={Zeroth-order online alternating direction method of multipliers: Convergence analysis and applications},
  author={Liu, Sijia and Chen, Jie and Chen, Pin-Yu and Hero, Alfred},
  booktitle={International Conference on Artificial Intelligence and Statistics},
  pages={288--297},
  year={2018},
  organization={PMLR}
}

@Article{Wang2023,
author={Wang, Zhongruo
and Balasubramanian, Krishnakumar
and Ma, Shiqian
and Razaviyayn, Meisam},
title={Zeroth-order algorithms for nonconvex--strongly-concave minimax problems with improved complexities},
journal={Journal of Global Optimization},
year={2023},
month={Nov},
day={01},
volume={87},
number={2},
pages={709-740},
abstract={In this paper, we study zeroth-order algorithms for minimax optimization problems that are nonconvex in one variable and strongly-concave in the other variable. Such minimax optimization problems have attracted significant attention lately due to their applications in modern machine learning tasks. We first consider a deterministic version of the problem. We design and analyze the Zeroth-Order Gradient Descent Ascent (ZO-GDA) algorithm, and provide improved results compared to existing works, in terms of oracle complexity. We also propose the Zeroth-Order Gradient Descent Multi-Step Ascent (ZO-GDMSA) algorithm that significantly improves the oracle complexity of ZO-GDA. We then consider stochastic versions of ZO-GDA and ZO-GDMSA, to handle stochastic nonconvex minimax problems. For this case, we provide oracle complexity results under two assumptions on the stochastic gradient: (i) the uniformly bounded variance assumption, which is common in traditional stochastic optimization, and (ii) the Strong Growth Condition (SGC), which has been known to be satisfied by modern over-parameterized machine learning models. We establish that under the SGC assumption, the complexities of the stochastic algorithms match that of deterministic algorithms. Numerical experiments are presented to support our theoretical results.},
issn={1573-2916},
doi={10.1007/s10898-022-01160-0},
url={https://doi.org/10.1007/s10898-022-01160-0}
}

@article{parikh2014proximal,
  title={Proximal algorithms},
  author={Parikh, Neal and Boyd, Stephen and others},
  journal={Foundations and trends${}^{\text{\textregistered}}$ in Optimization},
  volume={1},
  number={3},
  pages={127--239},
  year={2014},
  publisher={Now Publishers, Inc.}
}

@inproceedings{yue2023zeroth,
  title={Zeroth-order optimization with weak dimension dependency},
  author={Yue, Pengyun and Yang, Long and Fang, Cong and Lin, Zhouchen},
  booktitle={The Thirty Sixth Annual Conference on Learning Theory},
  pages={4429--4472},
  year={2023},
  organization={PMLR}
}

@Article{Nesterov2013,
author={Nesterov, Yu.},
title={Gradient methods for minimizing composite functions},
journal={Mathematical Programming},
year={2013},
month={Aug},
day={01},
volume={140},
number={1},
pages={125-161},
abstract={In this paper we analyze several new methods for solving optimization problems with the objective function formed as a sum of two terms: one is smooth and given by a black-box oracle, and another is a simple general convex function with known structure. Despite the absence of good properties of the sum, such problems, both in convex and nonconvex cases, can be solved with efficiency typical for the first part of the objective. For convex problems of the above structure, we consider primal and dual variants of the gradient method (with convergence rate {\$}{\$}O{\backslash}left({\{}1 {\backslash}over k{\}}{\backslash}right){\$}{\$}), and an accelerated multistep version with convergence rate {\$}{\$}O{\backslash}left({\{}1 {\backslash}over k^2{\}}{\backslash}right){\$}{\$}, where {\$}{\$}k{\$}{\$}is the iteration counter. For nonconvex problems with this structure, we prove convergence to a point from which there is no descent direction. In contrast, we show that for general nonsmooth, nonconvex problems, even resolving the question of whether a descent direction exists from a point is NP-hard. For all methods, we suggest some efficient ``line search'' procedures and show that the additional computational work necessary for estimating the unknown problem class parameters can only multiply the complexity of each iteration by a small constant factor. We present also the results of preliminary computational experiments, which confirm the superiority of the accelerated scheme.},
issn={1436-4646},
doi={10.1007/s10107-012-0629-5},
url={https://doi.org/10.1007/s10107-012-0629-5}
}

@InProceedings{pmlr-v70-li17g,
  title = 	 {Convergence Analysis of Proximal Gradient with Momentum for Nonconvex Optimization},
  author =       {Qunwei Li and Yi Zhou and Yingbin Liang and Pramod K. Varshney},
  booktitle = 	 {Proceedings of the 34th International Conference on Machine Learning},
  pages = 	 {2111--2119},
  year = 	 {2017},
  editor = 	 {Precup, Doina and Teh, Yee Whye},
  volume = 	 {70},
  series = 	 {Proceedings of Machine Learning Research},
  month = 	 {06--11 Aug},
  publisher =    {PMLR},
  pdf = 	 {http://proceedings.mlr.press/v70/li17g/li17g.pdf},
  url = 	 {https://proceedings.mlr.press/v70/li17g.html},
  abstract = 	 {In this work, we investigate the accelerated proximal gradient method for nonconvex programming (APGnc). The method compares between a usual proximal gradient step and a linear extrapolation step, and accepts the one that has a lower function value to achieve a monotonic decrease. In specific, under a general nonsmooth and nonconvex setting, we provide a rigorous argument to show that the limit points of the sequence generated by APGnc are critical points of the objective function. Then, by exploiting the Kurdyka-Lojasiewicz (KL) property for a broad class of functions, we establish the linear and sub-linear convergence rates of the function value sequence generated by APGnc. We further propose a stochastic variance reduced APGnc (SVRG-APGnc), and establish its linear convergence under a special case of the KL property. We also extend the analysis to the inexact version of these methods and develop an adaptive momentum strategy that improves the numerical performance.}
}

@article{2018Understanding,
  title={Understanding the Acceleration Phenomenon via High-Resolution Differential Equations},
  author={ Shi, Bin  and  Du, Simon S.  and  Jordan, Michael I.  and  Su, Weijie J. },
  journal={Mathematical Programming},
  volume={195},
  number={2},
  pages={79-148},
  year={2022},
}

@article{combettes2005signal,
  title={Signal recovery by proximal forward-backward splitting},
  author={Combettes, Patrick L and Wajs, Val{\'e}rie R},
  journal={Multiscale modeling \& simulation},
  volume={4},
  number={4},
  pages={1168--1200},
  year={2005},
  publisher={SIAM}
}

@article{nesterov2005smooth,
  title={Smooth minimization of non-smooth functions},
  author={Nesterov, Yu},
  journal={Mathematical programming},
  volume={103},
  pages={127--152},
  year={2005},
  publisher={Springer}
}

@article{chen2022gradient,
  title={Gradient Norm Minimization of Nesterov Acceleration: $ o (1/k^{3}) $},
  author={Chen, Shuo and Shi, Bin and Yuan, Ya-xiang},
  journal={arXiv preprint arXiv:2209.08862},
  year={2022}
}

@article{bubeck2015geometric,
  title={A geometric alternative to Nesterov's accelerated gradient descent},
  author={Bubeck, S{\'e}bastien and Lee, Yin Tat and Singh, Mohit},
  journal={arXiv preprint arXiv:1506.08187},
  year={2015}
}

@article{daubechies2004iterative,
  title={An iterative thresholding algorithm for linear inverse problems with a sparsity constraint},
  author={Daubechies, Ingrid and Defrise, Michel and De Mol, Christine},
  journal={Communications on Pure and Applied Mathematics: A Journal Issued by the Courant Institute of Mathematical Sciences},
  volume={57},
  number={11},
  pages={1413--1457},
  year={2004},
  publisher={Wiley Online Library}
}

@inproceedings{10.5555/2969442.2969558,
author = {Krichene, Walid and Bayen, Alexandre M. and Bartlett, Peter L.},
title = {Accelerated Mirror Descent in Continuous and Discrete Time},
year = {2015},
publisher = {MIT Press},
address = {Cambridge, MA, USA},
abstract = {We study accelerated mirror descent dynamics in continuous and discrete time. Combining the original continuous-time motivation of mirror descent with a recent ODE interpretation of Nesterov's accelerated method, we propose a family of continuous-time descent dynamics for convex functions with Lipschitz gradients, such that the solution trajectories converge to the optimum at a O(1/t2) rate. We then show that a large family of first-order accelerated methods can be obtained as a discretization of the ODE, and these methods converge at a O(1/k2) rate. This connection between accelerated mirror descent and the ODE provides an intuitive approach to the design and analysis of accelerated first-order algorithms.},
booktitle = {Proceedings of the 28th International Conference on Neural Information Processing Systems - Volume 2},
pages = {2845–2853},
numpages = {9},
location = {Montreal, Canada},
series = {NIPS'15}
}

@article{2019rate,
	author = {{Attouch, Hedy} and {Chbani, Zaki} and {Riahi, Hassan}},
	title = {Rate of convergence of the Nesterov accelerated gradient method in the subcritical case $\alpha \le 3$},
	DOI= "10.1051/cocv/2017083",
	url= "https://doi.org/10.1051/cocv/2017083",
	journal = {ESAIM: COCV},
	year = 2019,
	volume = 25,
	pages = "2",
}

@article{larson2019derivative,
  title={Derivative-free optimization methods},
  author={Larson, Jeffrey and Menickelly, Matt and Wild, Stefan M},
  journal={Acta Numerica},
  volume={28},
  pages={287--404},
  year={2019},
  publisher={Cambridge University Press}
}

@article{10.1093/imaiai/iaac027,
    author = {Wang, Tianyu},
    title = "{On sharp stochastic zeroth-order Hessian estimators over Riemannian manifolds}",
    journal = {Information and Inference: A Journal of the IMA},
    volume = {12},
    number = {2},
    pages = {787-813},
    year = {2022},
    month = {10},
    abstract = "{We study Hessian estimators for functions defined over an \\$n\\$-dimensional complete analytic Riemannian manifold. We introduce new stochastic zeroth-order Hessian estimators using \\$O (1)\\$ function evaluations. We show that, for an analytic real-valued function \\$f\\$, our estimator achieves a bias bound of order \\$ O ( \\gamma \\delta ^2 ) \\$, where \\$ \\gamma \\$ depends on both the Levi–Civita connection and function \\$f\\$, and \\$\\delta \\$ is the finite difference step size. To the best of our knowledge, our results provide the first bias bound for Hessian estimators that explicitly depends on the geometry of the underlying Riemannian manifold. We also study downstream computations based on our Hessian estimators. The supremacy of our method is evidenced by empirical evaluations.}",
    issn = {2049-8772},
    doi = {10.1093/imaiai/iaac027},
    url = {https://doi.org/10.1093/imaiai/iaac027},
    eprint = {https://academic.oup.com/imaiai/article-pdf/12/2/787/49287852/iaac027.pdf},
}

@book{lewis2012optimal,
  title={Optimal control},
  author={Lewis, Frank L and Vrabie, Draguna and Syrmos, Vassilis L},
  year={2012},
  publisher={John Wiley \& Sons}
}

@article{attouch2022first,
  title={First-order optimization algorithms via inertial systems with Hessian driven damping},
  author={Attouch, Hedy and Chbani, Zaki and Fadili, Jalal and Riahi, Hassan},
  journal={Mathematical Programming},
  pages={1--43},
  year={2022},
  publisher={Springer}
}

@article{chambolle2015convergence,
  title={On the convergence of the iterates of" FISTA"},
  author={Chambolle, Antonin and Dossal, Charles H},
  journal={Journal of Optimization Theory and Applications},
  volume={166},
  number={3},
  pages={25},
  year={2015}
}

@article{alvarez2001inertial,
  title={An inertial proximal method for maximal monotone operators via discretization of a nonlinear oscillator with damping},
  author={Alvarez, Felipe and Attouch, Hedy},
  journal={Set-Valued Analysis},
  volume={9},
  pages={3--11},
  year={2001},
  publisher={Springer}
}


\appendix

\section{Standard Conventions for Smooth Convex Optimization}


To be self-contained, we briefly review some common conventions for smooth convex optimization. 
In his book \cite{nest2018}, Nesterov's discussion on smooth convex optimization focuses on a class of functions $\mathscr{F}$ such that, for any $f \in \mathscr{F}$, $ \nabla f (\x^*) = 0 $ implies that $ \x^* $ is a global minimizer of $f$. In addition, $\mathscr{F}$ contains linear functions, and is closed under addition and nonnegative scaling. 

Following Nesterov's convention \cite{nest2018}, we write $ f \in \mathscr{F}_L^{h,l} (\R^n) $ ($l \le h$) if $f \in \mathscr{F}$, and in addition, 
\begin{itemize}
    \item $f$ is convex; 
    \item $f$ is $h$-times continuously differentiable; 
    \item The $ l $-th derivative of $f$ is $L$-Lipschitz: There exists $L \in (0,\infty)$ such that 
    \begin{align*}
        \| \partial^l f (\x) - \partial^l f ( \x' ) \| \le L \| \x - \x' \|, \quad \forall \x , \x' \in \R^n . 
    \end{align*}
\end{itemize}

\section{Proofs of Theorem \ref{thm:ge-b} and Corollary \ref{cor}} 
\label{sec:est}

In this section, we provide details of deferred proofs. 
Firstly, we provide a proof of Theorem \ref{thm:ge-b}. Together with Theorem \ref{thm:ge-a}, this shows that 
one can construct gradient oracles that satisfies properties \textbf{(H1)} and \textbf{(H2)} using zeroth-order information of the function. 
\begin{proof}[Proof of Theorem \ref{thm:ge-b}]
     $\mathcal{G} (f, \x , \epsilon;\xi)$ satisfies \textbf{(H2)} if the following inequality below is true almost surely
     \begin{equation}
         \frac{\lvert\langle \nabla f(\x)-\mathcal{G}(f, \x , \epsilon;\xi),\mathcal{G}(f, \x, \epsilon;\xi)\rangle\rvert}{\langle\mathcal{G}(f, \x, \epsilon;\xi),\mathcal{G}(f, \x, \epsilon;\xi)\rangle}\leq\frac{1}{4} . \nonumber 
     \end{equation}
     Using Schwarz inequality, we have
     \begin{equation}\label{thm2-le1}
      \frac{\lvert\langle \nabla f(\x)-\mathcal{G}(f, \x, \epsilon;\xi),\mathcal{G}(f, \x, \epsilon;\xi)\rangle\rvert}{\langle\mathcal{G}(f, \x, \epsilon;\xi),\mathcal{G}(f, \x, \epsilon;\xi)\rangle}\leq
      \frac{\lVert \nabla f(\x)- \mathcal{G}(f, \x, \epsilon;\xi)\rVert}{\lVert \mathcal{G}(f, \x, \epsilon;\xi)\rVert}
     \end{equation}
     Furthermore, the triangle inequality gives that 
     \begin{equation}
     \frac{\lVert \nabla f(\x)- \mathcal{G}(f, \x, \epsilon;\xi)\rVert}{\lVert \mathcal{G}(f, \x, \epsilon;\xi)\rVert}
     \leq
     \frac{\lVert \nabla f(\x)- \E_\xi\mathcal{G}(f, \x, \epsilon;\xi)\rVert+\lVert  \E_\xi\mathcal{G}(f, \x, \epsilon;\xi)-  \mathcal{G}(f, \x, \epsilon;\xi)\rVert}{\lVert \mathcal{G}(f, \x, \epsilon;\xi)\rVert} \nonumber 
     \end{equation}
     \textbf{Approach 1:}
     Select $\mathcal{G}(f, \x, \epsilon;\xi)$ as follows:
     \begin{equation*}
        \mathcal{G}(f, \x, \epsilon;\xi)= \frac{1}{2\epsilon }\sum_{k=1}^{n}\[ f(\x+\epsilon\mathbf{e_i} )-f(\x-\epsilon \mathbf{e_i})\]\mathbf{e_i}, \quad \forall \x \in \R^n
     \end{equation*}
     where $\mathbf{e_i}$ is the $i$-th unit vector. Then we use the Taylor formula to get
     \begin{equation}\label{thm2-le2} 
         \mathcal{G}(f, \x, \epsilon;\xi)=\nabla f(\x)+O(\epsilon \mathbf{1}), \quad 
         \lVert \mathcal{G}(f, \x, \epsilon;\xi)\rVert ^2=
         \lVert \nabla f(\x)\rVert^2+O(\epsilon), 
     \end{equation}
     where $\mathbf{1}$ denotes the all-one vector. 
     Therefore, there exists $M_1>0,M_2>0$,
     \begin{equation*}
         \lVert\mathcal{G}(f, \x, \epsilon;\xi)-\nabla f(\x)\rVert\leq M_1\epsilon, \quad 
         -M_2\epsilon\leq\lVert \mathcal{G}(f, \x, \epsilon;\xi)\rVert ^2-
         \lVert \nabla f(\x)\rVert^2 \leq M_2\epsilon
     \end{equation*}
     We choose $\epsilon$ so that 
     \begin{align}
         \epsilon \le \textbf{min}\{3(4M_2)^{-1}\lVert \nabla f(\x)\rVert^2,
     (8M_1)^{-1}\lVert \nabla f(\x)\rVert\} . \label{eq:def-eps} 
     \end{align}
     then we have
     \begin{equation}
         \frac{\lVert \nabla f(\x)- \mathcal{G}(f, \x, \epsilon;\xi)\rVert}{\lVert \mathcal{G}(f, \x, \epsilon;\xi)\rVert}\leq
         \frac{M_1\epsilon}{\sqrt{\lVert \nabla f(\x)\rVert^2-M_2\epsilon}}\leq
         \frac{2M_1}{\lVert \nabla f(\x)\rVert}\epsilon\leq \frac{1}{4} \nonumber 
      \end{equation}
      Therefore, from (\ref{thm2-le1}), we conclude that $\mathcal{G}(f, \x, \epsilon;\xi)$ above satisfies \textbf{(H2)}.\\
      \textbf{Approach 2:}
      Select $\mathcal{G}(f, \x, \epsilon;\xi)$ as follows:
     \begin{equation*}
        \mathcal{G}(f, \x, \epsilon;\xi)= 
        \frac{n}{2 \epsilon k} \sum_{i=1}^k \left[ f ( \x + \epsilon \v_{i} ) - f ( \x - \epsilon \v_{i}) \right] \v_{i}, \quad \forall \x \in \R^n
     \end{equation*}
     where $ [\v_{1}, \v_{2}, \cdots, \v_{k}] = \V \in$   $  \{ \mathbf{X} \in \R^{n \times k} : \mathbf{X}^\top \mathbf{X} = \mathbf{I}_k \} $ is uniformly sampled.\\
     Select $k=n$, then $ [\v_{1}, \v_{2}, \cdots, \v_{n}] $ is an orthonormal bases for $\R^{n}$, and we have
    \begin{equation*}
        \mathcal{G}(f, \x, \epsilon;\xi)= 
        \frac{1}{2 \epsilon } \sum_{i=1}^n \left[f ( \x + \epsilon \v_{i} ) - f ( \x - \epsilon \v_{i}) \right] \v_{i}, \quad \forall \x \in \R^n.
     \end{equation*}
     Let $v_i^k$ represents the $k$-th element of $\mathbf{v_i}$ . Using the Taylor formula,we have
      \begin{equation*}
        \mathcal{G} (f, \x , \epsilon;\xi)= 
        \(\sum_{i=1}^n\sum_{k=1}^{n}\frac{\partial f}{\partial \x_k}v_i^kv_i^p \)_{p=1}^n+O(\epsilon\mathbf{1}) =
        \(\sum_{k=1}^{n}\frac{\partial f}{\partial \x_k}\[\sum_{i=1}^n v_i^kv_i^p\] \)_{p=1}^{n}+O(\epsilon\mathbf{1})
     \end{equation*}
      $ [\v_{1}, \v_{2}, \cdots, \v_{k}] = \V \in$   $  \{ \mathbf{X} \in \R^{n \times n} : \mathbf{X}^\top \mathbf{X} = \mathbf{I}_n \} $ So it is obvious that $\mathbf{V}$ is an orthogonal matrix and $\sum_{i=1}^n v_i^kv_i^p$ is the inner product between the $k$-th column and the $p$-th column. According to the proposition of the orthogonal matrix, we know $\sum_{i=1}^n v_i^kv_i^p=\delta_{kp}$ , where $\delta_{kp}$ is the Kronecker delta. So we can simplify the formula above as
      \begin{equation}
        \mathcal{G}(f, \x , \epsilon;\xi)=\nabla f(\x)+o(\epsilon\mathbf{1}) \nonumber 
       \end{equation}
      Therefore we get (\ref{thm2-le2}). Repeat the process in Approach 1, we can conclude that $\mathcal{G}(f, \x, \epsilon;\xi)$ in Approach 2 satisfies \textbf{(H2)}.
\end{proof}

\if\highlight1
\color{red}
\fi 

\begin{proof}[Proof of Corollary \ref{cor}] 
    To prove Corollary \ref{cor}, it is sufficient to show that when (\ref{eq:cor-1}) is false, conditions (a) and (b) in Theorem \ref{thm:main-inexact} are satisfied. 
    Suppose (\ref{eq:cor-1}) is false. By our choice of $\epsilon_k$, we know for some $\beta > 1$, the sequence $ \{ \beta^k \| \nabla f (\x_k) \| \}_k $ is bounded away from 0. Therefore, a proper choice of $\beta$ guarantees that (\ref{eq:def-eps}) holds for all $k$ larger than a constant. 
    Therefore, with our choice of $\epsilon_k$, \textbf{(H2)} is satisfied (for all $k$ larger than a constant). Thus condition (a) in Theorem \ref{thm:main-inexact} holds true since \textbf{(H1)} is also satisfied. 

    Also, with this choice of $\epsilon_k$, condition (b) in Theorem \ref{thm:main-inexact} is also satisfied. This concludes the proof. 
\end{proof} 

\color{black}

\section{Additional Experiments}
\subsection{Zeroth-order version of NSA} 
We compare the NSA-zero algorithm (\ref{alg:nsa-inexact}) with classic gradient descent and the Nesterov Accelerated Gradient (NAG) \cite{nesterov1983method}. 
    In all experiments, we pick the same values of $ p $ and $\eta$ for both NSA-zero and NAG. The start point $\x_0$  is also same for both NSA-zero and NAG. We choose the gradient estimation $\mathcal{G}$ as
\begin{equation*}
\g_t(\x) = \frac{1}{2\epsilon_t}\sum_{i=1}^{n} \left[f(\x+\epsilon_t\v_i)-f(\x-\epsilon_t\v_i)\right]\cdot \v_i.
\end{equation*} 

\begin{enumerate}[label=\textbf{Figure \ref{fig:app}\alph*:}, leftmargin=2\parindent, align=left] 
    \item The cost function of least square regression
    \begin{equation*}
        f(\x) = \frac{1}{2}\|A \x- \b \|^2,
    \end{equation*}
    where $A$ is a $200 \times 100$ random matrix   with entries sampled as $i.i.d.$ standard Gaussian, $ \: \mathcal{N}(0,1)$ and $\b$ is a $200 \times 1$ standard Gaussian vector. We set $\eta=0.0005$ for all algorithms.
    \item The cost function of logistics regression
    \begin{equation*}
    f(\x)=\sum_{i=1}^{200}\left[- y_i A_i^\top \x+ \log(1+e^{A_i^\top \x})\right],
    \end{equation*} 
    where $A =\left(A_1 , ..., A_{200}\right)^\top$ is a $200\times 5$ random matrix sampled from $i.i.d.\:  \mathcal{N}(0,1)$ and $\y = (y_1, \cdots, y_{200})^\top$ is a $200\times 1$ random vector of labels sampled from $i.i.d.\: Ber(1,0.5)$. We set $\eta=0.0005$ for all algorithms.
    \item The cost function of  lasso regression
    \begin{equation*}
        f(\x) = \frac{1}{2} \|A \x-\b\|^2+\lambda \|\x\|_{1},
    \end{equation*}
     where $A$ is a $200 \times 100$ random matrix   with entries sampled as $i.i.d.$ standard Gaussian, $ \: \mathcal{N}(0,1)$ and $\b$ is a $200 \times 1$ standard Gaussian vector. We set $\eta=0.0005$ for all algorithms. 
    \item  The \texttt{log\_exp\_sum} function
    \begin{equation*}
    f(\x) = \rho \log \left[ \sum_{i=1}^{n}e^{(A_i^\top \x-b_i)/\rho}\right],
    \end{equation*}
    where $A = \left[ A_1,...A_{100}\right]^T$ is a $100\times 20$ random matrix sampled from $i.i.d. \:\mathcal{N}(0,1)$ and $\b = (b_1, b_2, \cdots, b_{100})^\top$ is a $100\times 1$ random vector  sampled from $i.i.d.\: \mathcal{N}(0,1)$. Let $ \rho = 5$. We set $\eta=0.02$ for all algorithms.
    \item  The cost function of  ridge regression
    \begin{equation*}
        f(\x) = \frac{1}{2} \|A\x-\b\|^2+\lambda \|\x\|_{2}^2,
    \end{equation*}
     where $A$ is a $200 \times 100$ random matrix   with entries sampled as $i.i.d.$ standard Gaussian, $ \: \mathcal{N}(0,1)$ and $\b$ is a $200 \times 1$ standard Gaussian vector. We set $\eta=0.0005$ for all algorithms. 
    \item  The cost function of matrix completion
    \begin{equation*}
    f(X)=\frac{1}{2}\|X_{ob}-A_{ob}\|^2+\lambda\|X\|_{*},
    \end{equation*}
    where  $A_{ob}$ includes the available elements in matrix $A$ and only 20\% of the elements in $A$ can be observed. The position of these elements is randomly chosen and the original matrix $A$ is a rank-5  $10\times 8$ matrix with the eigenvalues \textemdash$\left[1,2,3,4,5\right]$. To generate $A$, we  generate $U,S$ and $V$ where $S=diag\{1,2,3,4,5\}$ and $U,V^\top$ are rank-3 $50\times7$ matrices generated from $i.i.d $ $\: \mathrm{Uniform}(0,1)$. Then we calculate $A$ by $A = USV$, and set $\lambda=0.005$. We set $\eta=0.05$ for all algorithms.
\end{enumerate} 

\begin{figure}[H]
     \centering
     \subfloat[][$\min_{\x} \frac{1}{2}\|A \x - \b\|^2$]{\includegraphics[width = 0.45\linewidth]{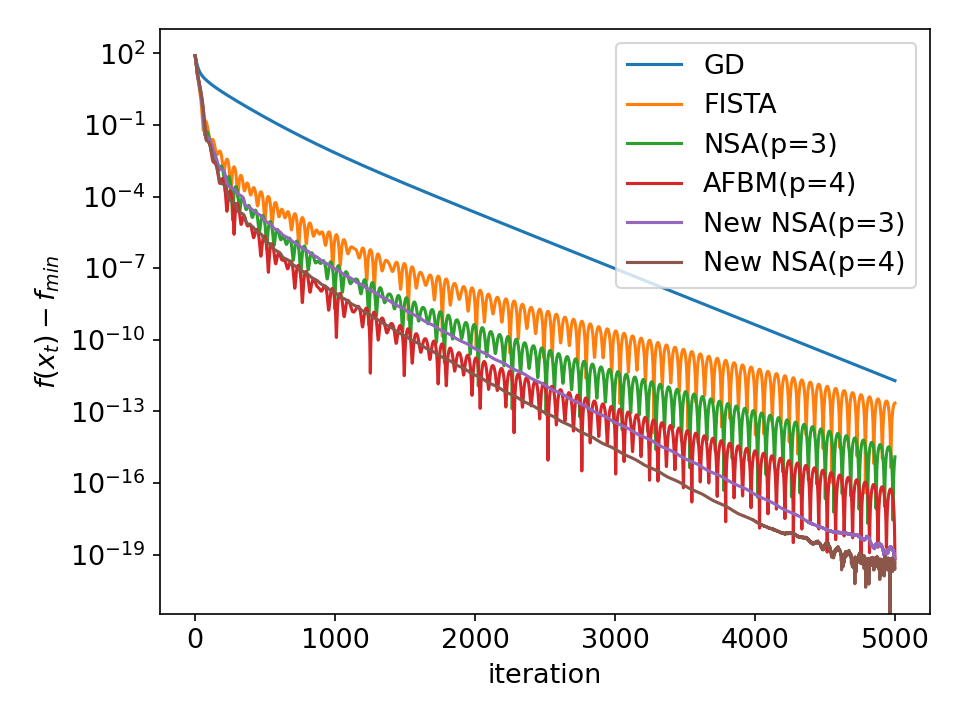} \label{fig7}} \hfill
     \subfloat[][$ \min_{\x} \sum_{i=1}^{n}-y_iA_i^\top \x+ \log(1+e^{A_i^\top \x}) $]{\includegraphics[width = 0.45\linewidth]{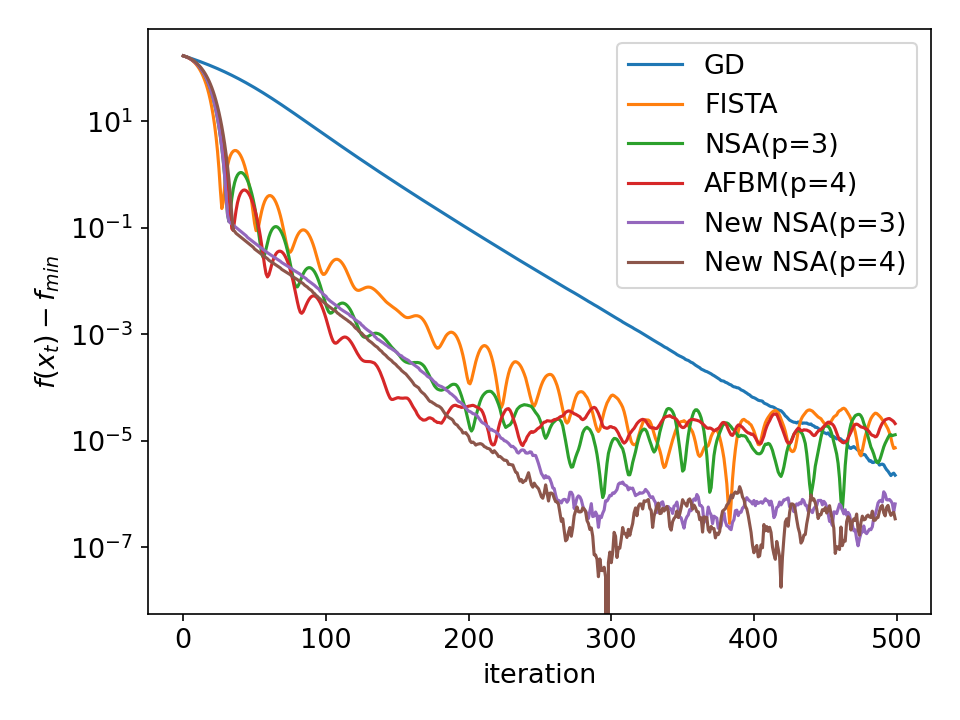} \label{fig8}} \\ 
     \subfloat[][$ \min_{\x} \frac{1}{2} \|A \x- \b\|^2+\lambda \|x\|_{1} $]{\includegraphics[width = 0.45\linewidth]{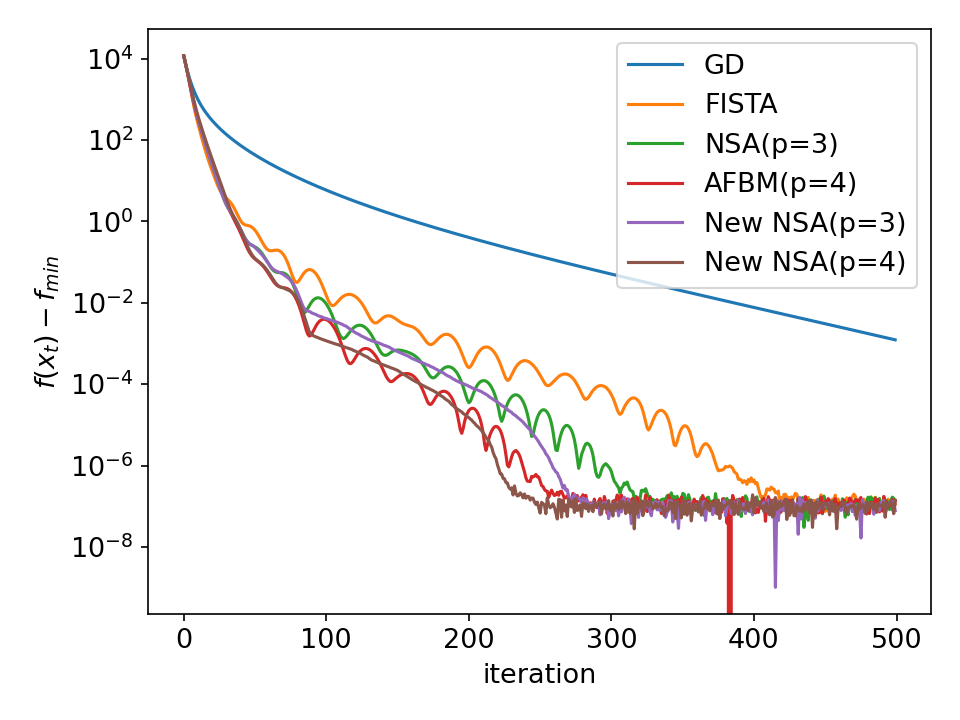} \label{fig9} } \hfill
     \subfloat[][$ \min_{\x} \rho \log \[ \sum_{i=1}^{n} \exp(A_i^\top \x-b_i)/\rho \] $]{\includegraphics[width = 0.45\linewidth]{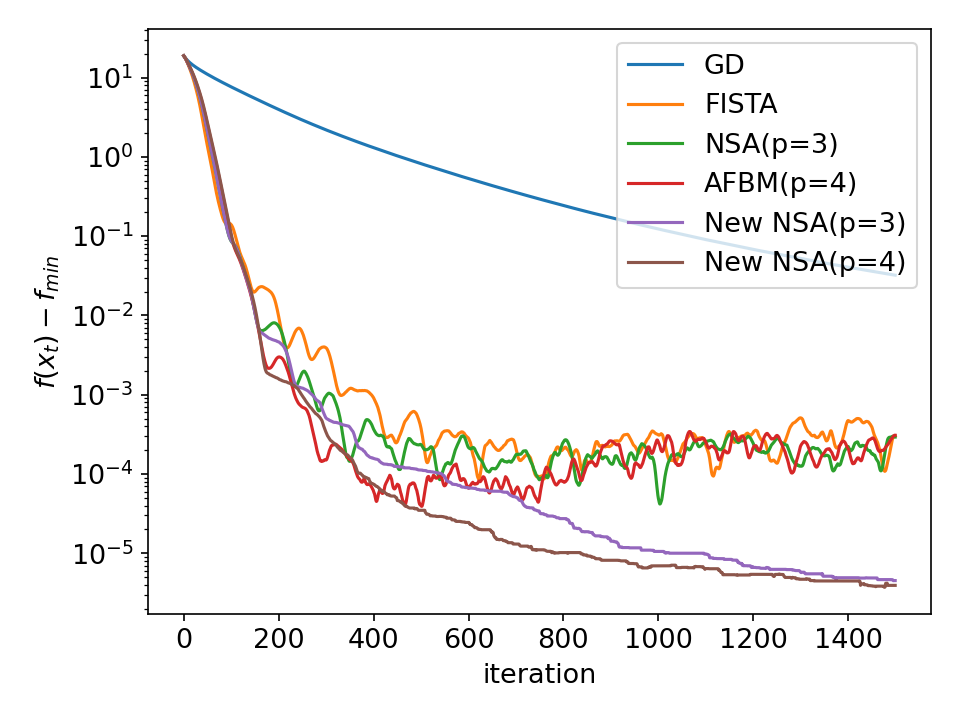} \label{fig10}} \\ 
     \subfloat[][$\min_{\x}\frac{1}{2} \|A \x-\b\|^2+\lambda\|\x\|_2^2 $]{\includegraphics[width = 0.45\linewidth]{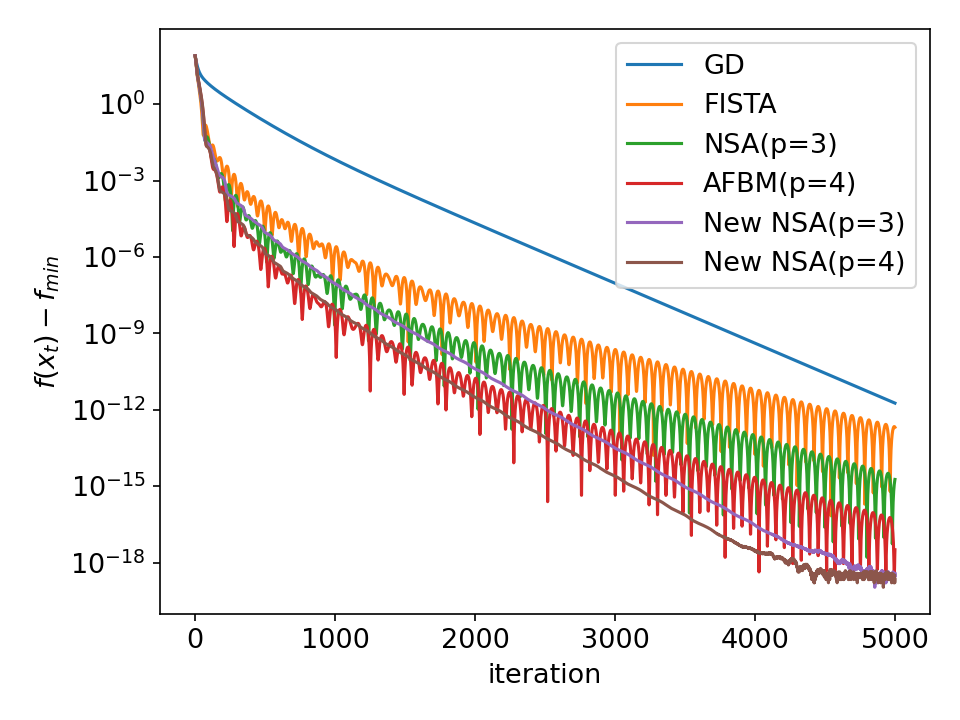} \label{fig11}} \hfill
     \subfloat[][$ \min_{\x} \frac{1}{2}\|X_{ob}-A_{ob}\|^2+\lambda \|X\|_* $]{\includegraphics[width = 0.45\linewidth]{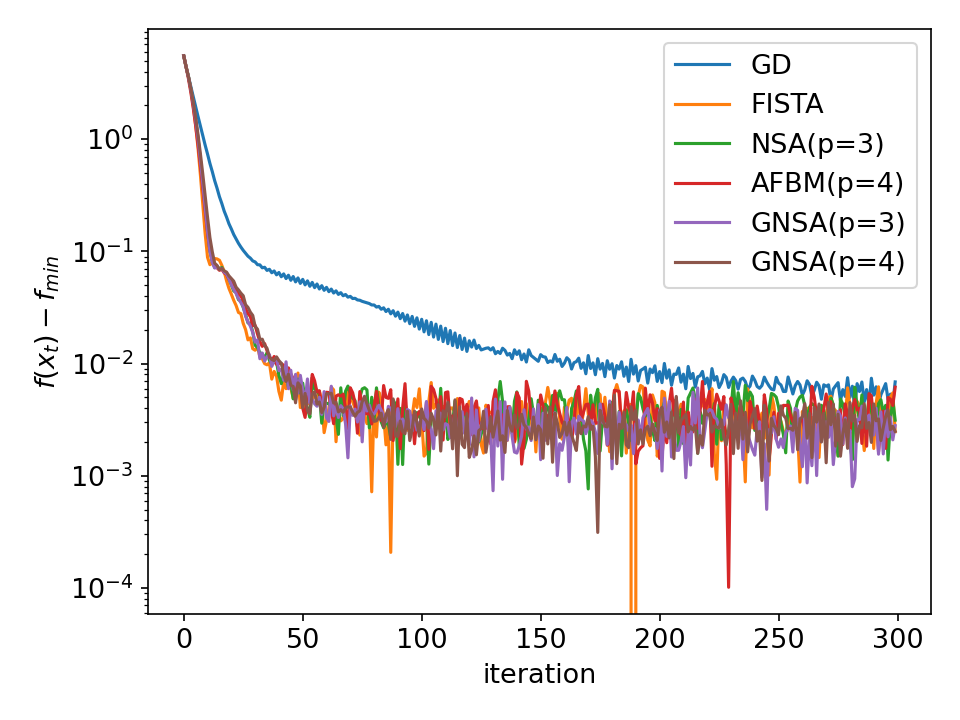} \label{fig12}} 
     \caption{Comparison of Algorithm \ref{alg:nsa-inexact} (referred to as `New NSA') with benchmark methods. In the graph legend, GD represents Gradient Descent; NSA denotes the original acceleration method by Nesterov and Spokoiny \cite{2017Random}; and FISTA and AFBM methods were introduced in \cite{2009A} and \cite{nesterov1983method,Nesterov2013}, respectively. In this context, $p$ is used to indicate the damping factor. As the graphs for NSA and AFBM are closely aligned when using the same damping factor, we present only the NSA method with a damping factor of $p = 3$ and the AFBM method with a damping factor of $p = 4$. In this experiments, all methods use inexact gradient obtained from zeroth-order information. \label{fig:app} }
\end{figure}




\end{document}